\numberwithin{equation}{section}
\newcommand{\LC}{\mathfrak{LC}}
\newcommand{\Pb}{\mathbb{P}}
\newcommand{\R}{\mathbb{R}}
\newcommand{\N}{\mathbb{N}}
\newcommand{\U}{\mathcal{U}}
\newcommand{\ve}{\varepsilon}
\newcommand{\ep}{\varepsilon}
\newcommand*\circled[1]{\tikz[baseline=(char.base)]{
            \node[shape=circle,draw,inner sep=2pt] (char) {#1};}}
\title{Persistent synchronization of heterogeneous networks with time-dependent linear diffusive coupling}
\author{
Hildeberto Jardón-Kojakhmetov\thanks{
University of Groningen, Faculty of Science and Engineering
Dynamical Systems, Geometry \& Mathematical Physics — Bernoulli Institute, Nijenborgh 9, Groningen, The Netherlands
(\email{h.jardon.kojakhmetov@rug.nl})}
\and
Christian Kuehn\thanks{
Technical University of Munich, School of Computation  Information and Technology, Boltzmannstraße 3, Garching bei München, Germany (\email{ckuehn@ma.tum.de})
}
\and
Iacopo P. Longo\thanks{
Imperial College London, 
Department of Mathematics, DynamIC, 
635 Huxley Building, 180 Queen’s Gate
South Kensington Campus, London, United Kingdom (\email{iacopo.longo@imperial.ac.uk})
}
}
\begin{document}
\maketitle
\begin{abstract}
We study synchronization for linearly coupled  temporal networks of heterogeneous time-dependent nonlinear agents via the convergence of attracting trajectories of each node. The results are obtained by constructing and studying the stability of a suitable linear nonautonomous problem bounding the evolution of the synchronization errors. Both, the case of the entire network and only a cluster, are addressed and the persistence of the obtained synchronization against perturbation is also discussed. Furthermore, a sufficient condition for the existence of attracting trajectories of each node is given. In all cases, the considered dependence on time requires only local integrability, which is a very mild regularity assumption. Moreover, our results mainly depend on the network structure and its properties, and  achieve synchronization up to a constant in finite time. Hence they are quite suitable for applications.  The applicability of the results is showcased via several examples: coupled van-der-Pol/FitzHugh-Nagumo oscillators, weighted/signed opinion dynamics, and coupled Lorenz systems.
\end{abstract}


\section{Introduction}
Within the class of complex systems in nature, technology and society, an interconnected structure is a recurrent feature~\cite{porter2016dynamical,strogatz2001exploring}. Dynamical systems on networks go a long way into successfully capturing and explaining these structures and their evolution~\cite{porter2016dynamical}. For example, the available dynamical theory for static networks---dynamical systems on graphs with a static topology---allows to study problems of opinion formation~\cite{Liggett}, collective motion~\cite{VicsekZafiris}, and epidemic dynamics~\cite{KissMillerSimon}.
In contrast to their static counterpart, temporal networks---sometimes also referred to as time-varying networks, dynamic networks
and temporal graphs---feature a time-dependent variation of the graph structure.  It has become increasingly evident that certain types of natural and societal  interactions require a time-dependent framework~\cite{holme2015modern,holme2012temporal}. Examples include human proximity and communication networks~\cite{saramaki2015seconds, stopczynski2014measuring, zhang2014human}, brain networks~\cite{bassett2013task}, travel and transportation networks~\cite{gallotti2015multilayer}, distributed computing~\cite{kuhn2010distributed}, or ecological and biological networks~\cite{blonder2012temporal} to name just a few. 
However, the analysis of temporal networks is generally much more complicated and the respective mathematical theory remains still substantially open~\cite{ghosh2022synchronized,holme2012temporal}.

For example, the emergence of a synchronized state in a temporal network or in some of its parts---also referred to as clusters---has been investigated analytically and numerically mostly on a case-to-case basis~\cite{boccaletti2006synchronization,jeter2019dynamics,zhang2021designing}; typically under the assumption of switched dynamics---only a finite set of coupling structures are selected over time~\cite{amritkar2006synchronized,cenk2021edges,liberzon1999basic, liu2011synchronization,wen2015pinning}---or of fast switching~\cite{belykh2004blinking,jeter2015synchronization,porfiri2006random,porfiri2008synchronization,rakshit2020intralayer,stilwell2006sufficient}.\par

More general results usually involve the study of the errors between nodes or with respect to a synchronizing trajectory, either through a global Lyapunov function~\cite{belykh2004connection,chen2007synchronization}, a master stability function~\cite{OthmerScriven,SegelLevin,PecoraCarroll,porfiri2011master, MulasKuehnJost}, linearization along a synchronizing trajectory~\cite{lu2005time, lu2008synchronization}, or the Hajnal diameter~\cite{lu2008synchronization}.

In this work, we study synchronization for linearly coupled  temporal networks of $N\ge 2$ heterogeneous time-dependent agents of the form
\begin{equation}\label{eq1}
        \dot x_i = f_i(t,x_i) + \sum_{k=1}^N a_{ik}(t)(x_k-x_i),\quad x_i=x_i(t)\in\R^M,\, i=1,\dots, N,
\end{equation}
where $x_i\in\R^M$ is the state variable of node $i$ and $f_i:\R\times\R^M\to\R^M$ its internal dynamics, and $A:\R\to\R^{N\times N}$, $t\mapsto A(t)=\big(a_{ij}(t)\big)$ is the generalized (or weighted) adjacency matrix of the network. We shall assume that $A$ is a locally integrable function allowing, for example, for discontinuous switching in the network topology. Moreover, we emphasize that, in contrast to the majority of the literature, we allow $a_{ij}$ to take values over the whole real line, which implies that signed networks~\cite{ShiAltafiniBaras} are covered by our theory. Furthermore, the assumptions of regularity in time required for our work are particularly mild: the matrix function $A$ needs to be locally integrable, while the functions $f_i(t,x_i)$ are Lipschitz Carathéodory. In other words, continuity in time is not assumed.\par\smallskip

As for other works on synchronization, also our analysis is based on the study of the evolution of the errors, which, however, are considered in norm $\xi_{ij}(t)=|x_i(t)-x_j(t)|^2$ for $i,j=1,\dots, N$ with $i<j$. This choice allows us to construct a suitable bounding linear system whose stability, in terms of spectral dichotomy, is used to infer information on the finite-time and asymptotic synchronization of the nonlinear problem. The key advantages of our approach can be summarized as follows:\par\smallskip

\begin{itemize}[leftmargin=*]
    \item Only very mild assumptions of regularity akin to local integrability in time are required.
    \item The sufficient conditions for synchronization depend almost completely (and increasingly so in case of global coupling) on the adjacency matrix $A(t)=\big(a_{ij}(t))$. Moreover, such conditions are quantitative and constructive allowing for a control-driven approach to synchronization via the modification of a suitable part of the network.
    \item The results are written for signed networks and for a considerably general class of agents. On the one hand, we admit nodes with internal time-dependent dynamics, a surprisingly rare case in the available theory for temporal networks; synchronization results for nonautonomous agents appear for example in~\cite{caraballo2008synchronization,pereira2013towards} although always with static coupling.  On the other hand, the nodes can be heterogeneous and  synchronization up to a constant is still achieved, provided that the theory applies. In case of homogeneous networks, a result of \emph{exact synchrony} holds. 
    \item The ideas used to obtain synchronyzation can be exploited to investigate its occurrence in either the entire network or only some of its parts (clusters), and we provide sufficient conditions for both these cases. 
    \item Being based on the roughness of the exponential dichotomy, the results of synchrony are robust against perturbation. This fact, which is important by itself, has also striking consequences in the case of perturbation of static networks. For example, we show that strongly connected static networks with positive edges always satisfy our conditions of synchronization provided that a sufficiently large global coupling exists and therefore the achieved synchrony is also robust.
\end{itemize}\par\smallskip

Our work is structured as follows. In Section~\ref{sec:prelimin} the notation, assumptions and some basic notions about nonautonomous linear systems are introduced. Section~\ref{sec:synchronization} deals with the synchronization (up to a constant) of networks of heterogeneous non-autonomous agents which are linearly coupled via a time-dependent network.  
Our approach entails the study of the synchronization errors $\xi_{ij}(t)=|x_i(t)-x_j(t)|^2$ for $i,j=1,\dots, N$ with $i<j$. Firstly we show that the maps $\xi_{ij}(t)$ for $i,j=1,\dots, N$ with $i<j$ induce a dynamical system on the positive cone of $\R^{N(N-1)/2}$. 
Then, we introduce our fundamental assumption~\ref{H1} on the one-sided affine upper-bound to the difference of the vector fields of any pair of uncoupled agents in our network. 
The assumption~\ref{H1} is firstly introduced pointwise and then generalized to pairs of continuous functions in Lemma~\ref{lem:two_nodes_assumptions_from_points_to_functions}, which allows us to upper bound the evolution of any pair of trajectories of two nodes $i$ and $j$ in Lemma~\ref{lem:error_ij_diff_inequality}. 
Theorem~\ref{thm:sync_up_to_constant} is our main result of synchronization which uses the results above to generate a nonautonomous linear inhomogeneous system controlling the evolution of the synchronization errors and therefore synchronization (up to a constant) is achieved via the analysis of stability of this problem: we check sufficient conditions for the existence of an exponential dichotomy with projector the identity. 
Two additional assumptions are made:~\ref{H2} there is a region of the phase space such that the solutions of each node of the network with initial condition therein, remain uniformly ultimately bounded near a globally defined bounded solution;~\ref{H3} the heterogeneity of the nodes can be uniformly bounded on compact intervals of time. Some corollaries address the simpler but important and possibly more common cases that the network is subjected to a global coupling coefficient, and that the assumption \ref{H3} can be changed for a stronger essential boundedness on the whole real line. 
Two examples complete the section to showcase the applicability of the obtained results of synchronization: in the first example we consider a time-dependent network of heterogeneous van der Pol oscillators while in the second one we show how Theorem \ref{thm:sync_up_to_constant} can induce a control-oriented approach to achieve synchronization in a ring network featuring a  contrarian node.

The idea contained in the proof of Theorem~\ref{thm:sync_up_to_constant} are further explored in Section \ref{sec:cluster} to guarantee the synchronization of just a cluster (see Theorem~\ref{thm:sync_cluster}). 
This section is completed by an example of a neural network of heterogeneous FitzHugh-Nagumo where two leading neurons induce recurrent patterns of synchronization on their immediate but possibly shared neighbors.  

In Section \ref{sec:persistence} we briefly expand on the persistence of synchronization achieved via the previous results by using the roughness of the exponential dichotomy. An application to the time-dependent perturbation of synchronized static network is also presented. Incidentally, we also show that strongly connected static networks with positive edges satisfy the conditions of synchronization of our result provided that a sufficiently high global coupling exists. The example of a perturbed star-network of Lorenz systems is presented at the end of the section to showcase the appearance and persistence of synchronization. 

In Section~\ref{sec:suff_cond_attr}, we investigate sufficient conditions for the existence of local attractors for both the uncoupled and the coupled problems. The case of solutions that remain only uniformly ultimately bounded near a bounded reference trajectory is also addressed. In other words, some sufficient results guaranteeing~\ref{H2} are given. 





\section{Notation, assumptions and preliminary definitions}\label{sec:prelimin}
By the symbol $\R^d$, with $d\in\N$, we will denote the $d$-dimensional Euclidean space with  the usual norm $|\cdot|$.
As a rule, a singleton $\{\xi\}$ in $\R^d$ will be identified with the element $\xi$ itself. 
For every $i=1,\dots,d$ the $i$-th component of $\xi\in\R^d$ will be denoted by $\xi_i$. 
Moreover, the notation $\xi\ge0$ means that $\xi_i\ge0$ for all $i=1,\dots,d$, whereas $\xi\gg 0$ means that 
$\xi_i>0$ for every $i=1,\ldots d$. 
The space $\R^d_+$ will denote the set of points $\xi\in\R^d$ such that $\xi\ge0$. This notation naturally extends to vector-valued functions. 
When $d=1$, we will simply write $\R$ instead of $\R^1$ and thus the symbol $\R_+$ will denote the set of non-negative real numbers. 
Moreover, by $B_r$, we denote the closed ball of $\R^d$ centered at the origin.

The symbol $\R^{M\times N}$, with $N,M\in\N$ represents the set of matrices of dimension $M\times N$, and given $A\in\R^{M\times N}$, $A^\top$ will denote its transpose. 
The space $\R^{M\times N}$ will be endowed with the induced norm $\|\cdot\|$ defined by $\|A\|=\sup_{|x|=1}|Ax|$. Oftentimes, we shall write $(\R^M)^N$ to denote the set of $N$-tuples of vectors in $\R^M$. 
Although, this set can evidently be identified with $\R^{M\times N}$, the notation $(\R^M)^N$ will be used when it is more convenient to treat each vector in $\R^M$ separately---typically, when they correspond to the state of a node in a network of size $N$. 
In this case, for every $i=1,\dots,N$ the $i$-th vector of $x\in(\R^M)^N$ will be denoted by $x_i$---the $i$-th node of the network.

For any interval $I\subseteq\R$ and any $W\subset\R^N$, 
$\mathcal{C}(I,W)$ will denote the space of continuous functions from $I$ to $W$ endowed with the sup norm $\|\cdot\|_\infty$. Moreover, $L^\infty$ and $L^1_{loc}$ will denote the spaces of real functions which are respectively, essentially bounded and locally integrable on $\R$, i.e.~belonging to $L^1(I)$ for every bounded interval $I\subset \R$. These latter spaces are intended as each endowed with their usual topology.\par\smallskip

We shall work under the most general assumptions guaranteeing existence and uniqueness of solutions for~\eqref{eq1} (see for example~\cite{bressan2007introduction}). Specifically, the functions  $t\mapsto a_{ij}(t)$ are assumed to be \emph{locally integrable}, whereas the functions $f_i(t,x_i)$ are assumed to be Lipschitz Carathéodory.

\begin{definition} [Lipschitz Carath\'eodory functions]\label{def:LC} 
A function $f\colon\R\times\R^M\to \R^M$  is Lipschitz Carathéodory (in short $f\in\LC$) if it satisfies Carathéodory conditions
\begin{enumerate}[label=\upshape(\textbf{C\arabic*}),leftmargin=27pt,itemsep=2pt]
\item\label{C1} $f$ is Borel measurable and
\item\label{C2} for every compact set $K\subset\R^M$ there exists a real-valued function $m^K\in L^1_{loc}$, called \emph{$m$-bound} in the following, such that for almost every $t\in\R$ one has $|f(t,x)|\le m^K(t)$ for any $x\in K$;
\end{enumerate}
and also a Lipschitz condition
\begin{enumerate}[label=\upshape(\textbf{L}),leftmargin=27pt,itemsep=2pt]
\item\label{L} for every compact set $K\subset\R^M$ there exists a real-valued function $l^K\in L^1_{loc}$ such that for almost every $t\in\R$ one has $|f(t,x_1)-f(t,x_2)|\le l^K(t)|x_1-x_2|$ for any $x_1,x_2\in K$.
\end{enumerate}
\end{definition}

Let us also recall the notion of $L^1_{loc}$-boundedness.
A subset $S$ of positive functions in $L^1_{loc}$  is bounded if for every $r>0$ the following inequality holds
\begin{equation*}
\sup_{m\in S}\int_{-r}^r m(t)\,  dt<\infty\,.
\end{equation*}
In such a case we will say that $S$ is $L^1_{loc}$-bounded.

\begin{remark}
 In many cases in the applications, the functions $m^K$ and $l^K$ appearing in~\ref{C2} and~\ref{L} are in fact constant. This is a particular case of Lipschitz Carathéodory functions where the $m$- and $l$-bounds are in $L^\infty\subset L^1_{loc}$. Consequently, the theory we present still applies. Note also that if a set of functions is uniformly bounded in $L^\infty$, then it is also $L^1_{loc}$-bounded.
 \end{remark}

Furthermore, we will assume that the nodes are subjected to linear diffusive coupling through a time-dependent weighted adjacency matrix $A:\R\to\R^{N\times N}$ so that each entry $a_{ij}(\cdot)\in L^1_{loc}$. More in general, we could consider the problem
\begin{equation}
        \dot x_i = f_i(t,x_i) + \sum_{k=1}^N a_{ik}(\omega_t)(x_k-x_i),\quad x_i\in\R^M,\, \omega\in\Omega,\, i=1,\dots, N,
\end{equation}
where $A:\Omega\to\R^{N\times N}$, $\omega\mapsto A(\omega)=\big(a_{ij}(\omega)\big)$ is the generalized (or weighted) adjacency matrix of the network with respect to a fixed $\omega$ in a complete metric space $\Omega$. A continuous flow $\theta:\R\times \Omega\to\Omega$, $(t,\omega)\mapsto\theta(t,\omega)=\omega_t$ on $\Omega$ can be considered, that, depending on the assumptions, can be completely deterministic or random: if $\Omega=\R$ and $(t,s)\mapsto \theta(t,s)= t+s$, we call the network deterministic; if $(\Omega,\mathcal{F},\Pb)$ is a probability space, we call the network random. 
This formalism would allow the construction of a continuous skew-product flow~\cite{longo2018topologies,longo2017topologies,longo2019weak} and the use of tools from topological dynamics to investigate, for example, the propagation of properties of synchronization. 
Given the complexity of the topic at hand, we prefer to restrict ourselves to a simpler---although less powerful---formalism, to privilege the understanding of the conditions of synchronization, the main focus of our work. In any case, we note that, fixed a certain $\omega\in\Omega$, a path-wise approach through the base flow $\theta$ is also covered by our theory by simply identifying $A(\omega_t)$ with $A(t)$ and dropping the dependence on $\omega$.

As a rule, we shall say that a locally absolutely continuous function $\sigma: I\subset \R\to\R^{M\times N}$, $t\mapsto\sigma(t)=(\sigma_1(t),\dots, \sigma_n(t))$ solves (or is a solution of)~\eqref{eq1} with initial conditions at $t_0\in I$ given by $\overline x=(\overline x_1,\dots,\overline  x_N)\in(\R^M)^N$, if for every  $i=1,\dots,N$, $\sigma_i(\cdot)$ is a solution of the integral problem
\begin{equation}
x_i(t)= \overline x_i + \int_{t_0}^t \bigg(f_i(s,x_i(s)) + \sum_{j=1}^N a_{ij}(s)(\sigma_j(s)-x_i(s))\bigg)\,ds,\quad t\in I.
\end{equation}
We shall denote by $x(\cdot,t_0,\overline x):I\to(\R^{M})^N$ such solution. 

Our objective is to identify conditions of synchronization for~\eqref{eq1}. Let us clarify what we mean by synchronization in this context.

\begin{definition}
We say that~\eqref{eq1} synchronizes up to a constant $\mu>0$ in a synchronization region $E\subset\R\times\R^M$ if all the following properties are satisfied
\begin{itemize}[leftmargin=*,itemsep=2pt]
    \item there is a function $\sigma:\R\to(\R^M)^N$, that is locally absolutely continuous on any compact interval and maps $t\in\R$ to $\sigma(t)=(\sigma_1(t),\dots,\sigma_N(t))$  such that $(t,\sigma_i(t))\in E$ for all $t\in\R$ and all $i=1,\dots,N$, and $\sigma(t)$ solves~\eqref{eq1},
    \item for all $(t_0,\overline x)\in \R\times (E_{t_0})^N$, where $E_{t_0}=\{x\in\R^M\mid (t_0,x)\in E\}$, if the locally absolutely continuous function 
    $x(\cdot, t_0,\overline x):=(x_1(\cdot, t_0,\overline x_1),\dots,x_N(\cdot, t_0,\overline x_N))$ solves~\eqref{eq1} with $x_i(t_0, t_0,\overline x_i)=\overline x_i$ for all $i=1,\dots, N$, then $x(\cdot, t_0,\overline x)$ is defined for all $t>t_0$ and
    \begin{equation}
    \limsup_{t\to\infty}|x_i(t, t_0,\overline x_i)- \sigma_i(t)|\le\frac{\mu}{3},\quad\text{ for all }i=1,\dots,N,
    \end{equation}
    \item for all $i,j=1,\dots,N$
    \begin{equation}
    \limsup_{t\to\infty}|\sigma_i(t)- \sigma_j(t)|\le \frac{\mu}{3}.
    \end{equation}
\end{itemize}
If the previous conditions are satisfied then, the triangular inequality guarantees that any two trajectories of two nodes $i,j$ with initial data  $\overline x_i, \overline x_j\in E_{t_0}$ will satisfy \begin{equation}\limsup_{t\to\infty}|x_i(t, t_0,\overline x_i)- x_j(t, t_0,\overline x_j)|\le \mu.\end{equation}
In particular, if $f_i=f$ for all $i=1,\dots,N$, then we say that~\eqref{eq1} synchronizes in a synchronization region $E\subset\R\times\R^M$ if there is a solution $s(t)$ of $\dot x =f(t,x)$ defined over the whole real line and with graph in $E$ such that for all $(t_0,\overline x)\in \R\times (E_{t_0})^N$, the solution $x(\cdot, t_0,\overline x)$  of~\eqref{eq1}, with $x(t_0, t_0,\overline x)=\overline x$, is defined for all $t>t_0$ and 
\begin{equation}
\lim_{t\to\infty}|x_i(t, t_0,\overline x_i)- s(t)|=0,\quad\text{ for all }i=1,\dots,N.
\end{equation}
\end{definition}

The most important results in our work require a notion of splitting of the extended phase space of a linear time-dependent problem $\dot y =A(t)y$, $y\in\R^d$, in invariant manifolds characterized by asymptotic exponential decay either in forward or in backward time. The notions of exponential dichotomy, dichotomy spectrum and associated splitting fulfill this requirement. We briefly recall them here and point the interested reader to Siegmund~\cite{siegmund2002dichotomy} for all the details of the locally integrable case.  

\begin{definition}[Exponential dichotomy and dichotomy spectrum]\label{def:exp_dichotomy}
Let $ A:\R\to\R^{d\times d}$ be a locally integrable function and consider the linear system 
\begin{equation}\label{eq:linear_non_autonomous}
 \dot y= A(t)y.   
\end{equation}
An invariant projector of~\eqref{eq:linear_non_autonomous} is a function $P:\R\to\R^{d\times d}$
of projections $P(t)$, $t \in\R$, such that
\begin{equation}
P(t)Y(t,s)=Y(t,s)P(s),\quad \text{for all }t,s\in\R,
\end{equation}
where $Y:\R^2\to\R^{d\times d}$ is the principal matrix solution  of~\eqref{eq:linear_non_autonomous} at $s\in\R$,  i.e.~$Y(\cdot,s)$ solves~\eqref{eq:linear_non_autonomous} with $Y(s,s)=Id$.\par\smallskip

The system~\eqref{eq:linear_non_autonomous} is said to have an \emph{exponential dichotomy} on $\R$ if there are an invariant projector $P(\cdot)$ and constants $\gamma>0$, $K\ge1$  such that 
\begin{equation}
\begin{split}
    \big\|Y(t,s)P(s)\big\|\le Ke^{-\gamma(t-s)},\quad\text{for all }t\ge s,&\text{ and}\\
    \big\|Y(t,s)\big(Id-P(s)\big)\big\|\le Ke^{\gamma(t-s)},\quad\text{for all }t\le s,&
\end{split}
\end{equation}
where $Id$ is the identity matrix on $\R^{d\times d}$.
The \emph{dichotomy spectrum} of~\eqref{eq:linear_non_autonomous} is the set
\begin{equation}
\Sigma(A):=\{\alpha\in\R\mid \dot y=(A(t)-\alpha\, Id)y\ \text{ has no exponential dichotomy}\}.
\end{equation}
\end{definition}

\begin{remark}\label{rmk:ED}
(i) Siegmund~\cite{siegmund2002dichotomy} showed that either $\Sigma(A)$ is empty, or it is the whole $\R$, or there exists $k\in\N$, with $1\le k\le d$, such that
\begin{equation}
\Sigma(A)= I_1 \cup[a_2,b_2]\cup\dots \cup [a_{k-1},b_{k-1}]\cup I_k\,,
\end{equation}
where $I_1$ is either $[a_1,b_1]$ or $(-\infty,b_1]$,  $I_k$ is either $[a_k,b_k]$ or $[a_k,\infty)$, and $a_1\le b_1<a_2\le b_2<\dots\le a_{k}\le b_k$.  If $A(\cdot)$ has constant entries, the dichotomy spectrum reduces to the real parts of the eigenvalues of $A$. In addition, a decomposition of $\R\times\R^d$ in spectral manifolds holds, i.e.
\begin{equation}\R\times\R^d=\mathcal{W}_0\oplus\dots\oplus\mathcal{W}_{k+1}\, ;
\end{equation}
see~\cite{siegmund2002dichotomy} for details. If the dichotomy spectrum of~\eqref{eq:linear_non_autonomous} is contained in $(-\infty,0)$, then~\eqref{eq:linear_non_autonomous} admits an exponential dichotomy with invariant projector $P(\cdot)\equiv Id$.\par\smallskip
(ii) The notion of exponential dichotomy should be regarded as a natural extension  to the nonautonomous framework of the tools for stability analysis of autonomous linear systems. 
Indeed, it is well-known that the time-dependent eigenvalues are of little help for the investigation of stability of nonautomous linear problems~\cite{coppel2006dichotomies,fink1974almost}, unless the time-dependence is periodic (replace eigenvalues for Floquet multipliers~\cite{hale1969ordinary}) or slow~\cite{coppel2006dichotomies,potzsche2004exponential}. On the other hand, Lyapunov exponents successfully encapsulate the asymptotic stability but do not necessarily provide robustness against nonlinear perturbations, which is instead guaranteed by the roughnees of the exponential dichotomy~\cite{coppel2006dichotomies}---the Lyapunov spectrum is a subset of the dichotomy spectrum and if the latter is a point spectrum, then it reduces to the former~\cite{sacker1978spectral,dieci2002lyapunov}. The downside of the notion of exponential dichotomy resides in the difficulty of verifying it. Some explicit calculable sufficient criteria were provided by Coppel in terms of the time-dependent eigenvalues for bounded matrix functions~\cite{coppel2006dichotomies} and by Fink in terms of row- or column-dominance of the matrix $A(t)$~\cite{fink1974almost}, which we will use later in this work. Efficient numerical approximations have also been developed using QR and singular value decomposition~\cite{dieci2002lyapunov2,dieci2002lyapunov,dieci2007lyapunov,froyland2013computing}.
\end{remark}

\section{Synchronization}\label{sec:synchronization}

In this section, we provide sufficient conditions for synchronization up to a constant via comparison arguments with a suitably constructed linear system bounding the growth of the ``errors". Given a solution $x:\R\to(\R^M)^N$ of~\eqref{eq1}, we shall call \emph{(synchronization) errors}, the vector $\xi(t)\in\R^{N(N-1)/2}$ defined by 
\begin{equation}
\xi_{ij}(t)=|x_i(t)-x_j(t)|^2,\quad i,j=1,\dots,N,\, i<j.
\end{equation}

Our first observation is that given a network~\eqref{eq1}, the synchronization errors induce a nonautonomous dynamical system on the positive cone $\R^{N(N-1)/2}_+$ of $\R^{N(N-1)/2}$.
\begin{prop}\label{prop:dyn_sys_from_difference}
Given a network of $N$ nodes, $N\ge 2$, in $\R^M$ as in~\eqref{eq1}, the map 
\begin{equation}\label{eq:08/11-14:24}
x\in(\R^M)^N\mapsto (|x_i-x_j|^2)_{i,j=1,\dots,N,\,i<j}\in\R^{N(N-1)/2}
\end{equation}
induces a nonautonomous dynamical system on $\R^{N(N-1)/2}_+$.
\end{prop}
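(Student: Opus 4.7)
The plan is to show that, for every locally absolutely continuous solution $x(\cdot)$ of~\eqref{eq1}, the curve $t \mapsto \xi(t) := \phi(x(t))$, where $\phi\colon (\R^M)^N \to \R^{N(N-1)/2}$ is defined by $\phi(\bar x) = (|\bar x_i - \bar x_j|^2)_{i<j}$, takes values in the positive cone $\R^{N(N-1)/2}_+$, is itself locally absolutely continuous, and inherits a cocycle property from~\eqref{eq1}. As a preliminary, the standing hypotheses $f_i \in \LC$ and $a_{ij} \in L^1_{loc}$ guarantee existence and uniqueness of solutions of~\eqref{eq1}, and hence a continuous process $\Phi(t,s,\bar x) = x(t;s,\bar x)$ on $(\R^M)^N$. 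Composing with $\phi$ gives a well-defined continuous map $\Psi := \phi\circ\Phi$ into $\R^{N(N-1)/2}_+$.

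The heart of the argument is a direct differentiation of $\xi_{ij}(t) = |x_i(t)-x_j(t)|^2$ along a solution. Writing $\dot\xi_{ij} = 2(x_i-x_j)^\top(\dot x_i - \dot x_j)$ and substituting the right-hand side of~\eqref{eq1}, the coupling terms can be recast purely in the $\xi$-variables via the polarization identities $2u^\top v = |u+v|^2 - |u|^2 - |v|^2$ (with $u = x_i-x_j$, $v = x_k-x_i$, so that $u+v = x_k-x_j$) and $2u^\top v = |u|^2 + |v|^2 - |u-v|^2$ (with $u = x_i-x_j$, $v = x_k-x_j$). After a short calculation, and using the symmetric extension $\xi_{kl}=\xi_{lk}$, $\xi_{kk}=0$, this yields for almost every $t$
\begin{equation*}
\dot\xi_{ij} = 2(x_i - x_j)^\top\bigl[f_i(t,x_i) - f_j(t,x_j)\bigr] + \sum_{k=1}^N\Bigl(a_{ik}(t)\bigl(\xi_{jk}-\xi_{ij}-\xi_{ik}\bigr) - a_{jk}(t)\bigl(\xi_{ij}+\xi_{jk}-\xi_{ik}\bigr)\Bigr).
\end{equation*}
The coupling contribution is linear in $\xi$ with $L^1_{loc}$ coefficients, while the drift coming from the internal dynamics is a locally integrable forcing term driven by $x(t)$; together they certify that $t\mapsto\xi(t)$ is locally absolutely continuous.

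To conclude, I would verify the remaining axioms of a nonautonomous dynamical system. Positive invariance of $\R^{N(N-1)/2}_+$ is automatic from $\xi_{ij}\ge 0$; continuity with respect to initial data follows from the continuity of $\Phi(t,s,\cdot)$ together with the polynomial regularity of $\phi$; and the cocycle identity for $\Psi$ follows at once from that for $\Phi$. The main subtlety is that $\phi$ is not injective---configurations differing by a rigid motion of $\R^M$ have the same image, and the heterogeneous drift term $2(x_i-x_j)^\top[f_i(t,x_i)-f_j(t,x_j)]$ generally depends on the representative, not just on $\xi$. Hence the induced dynamics does not project to a closed autonomous field on $\R^{N(N-1)/2}_+$ alone, but is most naturally read as a skew-product evolution over the process on $(\R^M)^N$ with fiber $\R^{N(N-1)/2}_+$. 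The explicit ODE for $\dot\xi_{ij}$ displayed above is precisely what will be exploited, via the upper bounds of assumption~\ref{H1} and Lemmas~\ref{lem:two_nodes_assumptions_from_points_to_functions}--\ref{lem:error_ij_diff_inequality}, to reduce the synchronization problem to a linear nonautonomous system on the cone.
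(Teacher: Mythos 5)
Your proposal is correct and follows essentially the same route as the paper: the solution process of~\eqref{eq1} is pushed forward through the squared-difference map, the cocycle property is inherited from the original system, and the values lie in the positive cone $\R^{N(N-1)/2}_+$. The only differences are cosmetic---the paper verifies invariance by noting that $\frac{d}{dt}|x_i-x_j|^2$ vanishes whenever $|x_i-x_j|^2=0$, whereas you observe positivity is automatic, and your explicit $\xi$-ODE and skew-product caveat about non-injectivity anticipate Lemma~\ref{lem:error_ij_diff_inequality} and give a slightly more careful reading of what ``induces'' means here.
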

\begin{proof}
Let $(t_0,x_0)\in\R\times(\R^M)^N$ and consider the solution $x(\cdot,t_0,x_0):\R\to(\R^M)^N$ of~\eqref{eq1} with initial data $x(t_0)=x_0$. Note that, by definition of solution, the cocycle property $x(t+s,t_0,x_0)=x\big(t,s,x(s,t_0,x_0)\big)$ is satisfied whenever all the involved terms are well-defined. Consequently, a local cocycle is induced also on $\R^{N(N-1)/2}$ through the map in~\eqref{eq:08/11-14:24}. In particular, note that the obtained trajectories verify
\begin{equation}
\begin{split}
\frac{d}{dt}|x_i&(t,t_0,x_0)-x_j(t,t_0,x_0)|^2\\
&=2\langle x_i(t,t_0,x_0)-x_j(t,t_0,x_0), f_i\big(t,x_i(t,t_0,x_0)\big)-f_j\big(t,x_j(t,t_0,x_0)\big)\rangle  \\
&\qquad+\langle x_i(t,t_0,x_0)-x_j(t,t_0,x_0), \sum_{k=1}^N a_{ik}(t)\big(x_k(t,t_0,x_0)-x_i(t,t_0,x_0)\big)\rangle \\
&\qquad-\langle x_i(t,t_0,x_0)-x_j(t,t_0,x_0), \sum_{k=1}^N a_{jk}(t)\big(x_k(t,t_0,x_0)-x_j(t,t_0,x_0)\big)\rangle 
\end{split}
\end{equation}
for $i,j=1,\dots,N,\,i<j$. Therefore, we have that if $|x_i(t,t_0,x_0)-x_j(t,t_0,x_0)|^2=0$ for some $i,j=1,\dots,N,\,i<j$ and $t\in\R$, then $\frac{d}{dt}|x_i(t,t_0,x_0)-x_j(t,t_0,x_0)|^2=0$. Hence, the positive cone $\R^{N(N-1)/2}_+$ is invariant for the induced flow.
\end{proof}

The following pair-wise property on the decoupled dynamics will be important for our argument.

\begin{enumerate}[label=\upshape(\textbf{H1}),leftmargin=27pt,itemsep=2pt]
\item\label{H1} For every $i,j=1,\dots,N$ and $r>0$ there are functions $\alpha^r_{ij},\beta^r_{ij}\in L^1_{loc}(\R,\R)$, with $\beta^r_{ij}$ non-negative, such that for almost every $t\in\R$
\begin{equation}\label{eq:H1}
\langle x-y,f_i(t,x)-f_j(t,y)\rangle \le \alpha^r_{ij}(t)\,|x-y|^2+\beta^r_{ij}(t),\quad\text{for all }x,y\in B_r.
\end{equation}
\end{enumerate}

\begin{remark}\label{rmk:f_i=f_j implies_beta=0}
Hypothesis~\ref{H1} seems somewhat technical but it is in fact not very restrictive. Note that if the network is made of identical nodes, i.e.~if $f_i=:f$ for all $i=1,\dots,N$, then~\ref{H1} is a one-sided Lipschitz condition. In particular, it is trivially true with $\alpha^r_{ij}(t)=l^r(t)$ being the Lipschitz coefficient of $f$ on $B_r$ and $\beta_{ij}^r(t)=0$ for all $r>0$ and $t\in\R$. Indeed, using the Cauchy-Schwarz inequality and the Lipschitz continuity, we have that
\begin{equation}
\langle x-y,f(t,x)-f(t,y)\rangle \le |x-y|\cdot|f(t,x)-f(t,y)|\le l^r(t)|x-y|^2.
\end{equation}
Note also that, in practical cases, one can often choose the functions $\alpha^r_{ij},\beta^r_{ij}\in L^1_{loc}(\R,\R)$ as constants. It goes without saying that the theory hereby developed still applies. In fact, this simplification allows sharper results as we highlight in some of the corollaries to our main theorems. 

In any case, it is helpful to have a rough intuition of what $\alpha_{ij}$ and $\beta_{ij}$ in Hypothesis~\ref{H1} represent. The function $\beta_{ij}^r(t)$ can be interpreted as a measure of how distinct the dynamics of nodes $i$ and $j$ is; after all, $\beta_{ij}^r(t)\geq0$ vanishes when $f_i=f_j$. On the other hand $\alpha_{ij}^r(t)$ tells us the ``tendency of the decoupled solutions to synchronize''. In the homogeneous scalar case, if we call $e(t)=x(t)-y(t)$, we have that hypothesis~\ref{H1} would read as $e(t)\dot e(t)\leq\alpha_{ij}^r(t)e(t)^2$. If $\alpha_{ij}^r(t)<0$ for all $t\in\R$, indeed the ``decoupled error'' $e(t)$ vanishes as $t\to\infty$. This interpretation becomes also evident when we present our main result, Theorem~\ref{thm:sync_up_to_constant}. If the nodes are identical, then  $\alpha_{ij}^r(t)=\alpha^r(t)<0$ for all $t\in\R$ implies the dissipativity of the uncoupled problem $\dot x=f(t,x)$ and therefore the existence of a forward attracting trajectory.
\end{remark}

Next, we show that it is possible to extend~\ref{H1} to pairs of continuous functions obtaining the same inequality almost everywhere.

\begin{lemma}\label{lem:two_nodes_assumptions_from_points_to_functions}
Assume~\ref{H1} holds and consider two continuous functions $\phi,\psi:I\to B_r$, with $I\subseteq \R$ and $r>0$.Then, for almost every~$t\in I$,
\begin{equation}\label{eq:one_sided_Lips_TIME}
\big\langle \phi(t)-\psi(t),f_i\big(t,\phi(t)\big)-f_j\big(t,\psi(t)\big)\big\rangle \le \alpha^r_{ij}(t)\,|\phi(t)-\psi(t)|^2+\beta^r_{ij}(t).
\end{equation}
\end{lemma}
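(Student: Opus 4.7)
The plan is to exploit the fact that hypothesis~\ref{H1} provides a single exceptional null set in time, which can be taken independently of the pair $(x,y)\in B_r\times B_r$. Reading \ref{H1} with the natural quantifier order, there is a Lebesgue-null set $N_{ij}^r\subset\R$ such that for every $t\in\R\setminus N_{ij}^r$ the inequality \eqref{eq:H1} holds \emph{simultaneously} for all $x,y\in B_r$. Since $\phi$ and $\psi$ are continuous functions from $I$ into $B_r$, for each $t\in I\setminus N_{ij}^r$ we may substitute $x=\phi(t)$ and $y=\psi(t)$ into \eqref{eq:H1} to obtain \eqref{eq:one_sided_Lips_TIME} at $t$. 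Because $N_{ij}^r$ has measure zero, the conclusion holds for almost every $t\in I$.

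To make this argument fully rigorous I would first record that the function $t\mapsto \langle\phi(t)-\psi(t),\,f_i(t,\phi(t))-f_j(t,\psi(t))\rangle$ is Lebesgue measurable: this follows from \ref{C1} applied to $f_i$ and $f_j$ and the continuity of $\phi$ and $\psi$, by the standard fact that the composition of a Carathéodory map with a continuous curve is measurable. Together with the local integrability of $\alpha^r_{ij}$ and $\beta^r_{ij}$, this guarantees that both sides of \eqref{eq:one_sided_Lips_TIME} are measurable functions of $t$ on $I$, so the "almost every $t\in I$" statement is meaningful.

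The only genuine subtlety is the reading of the quantifier "for almost every $t$" in \ref{H1}. Should one prefer the weaker reading in which the null set $N_{x,y}$ depends on the pair of points, the following density argument recovers a common null set. Choose a countable dense subset $D\subset B_r$ and set $N:=\bigcup_{(x,y)\in D\times D}N_{x,y}$, which is still null. By \ref{L}, the maps $f_i(t,\cdot)$ and $f_j(t,\cdot)$ are continuous on $B_r$ for every $t$ outside a further null set $N'$. For $t\in\R\setminus(N\cup N')$, the inequality in \eqref{eq:H1} extends from $D\times D$ to $B_r\times B_r$ by continuity of both sides in $(x,y)$. Substituting $(\phi(t),\psi(t))$ then yields \eqref{eq:one_sided_Lips_TIME} for almost every $t\in I$. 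I do not expect any serious obstacle in either approach; the argument is essentially a measurable-selection-style substitution, with the Lipschitz-Carathéodory regularity of $f_i,f_j$ ensuring that all compositions behave well.
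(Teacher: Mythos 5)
Your proposal is correct. Under the literal quantifier order of~\ref{H1} (``for a.e.\ $t$ \dots for all $x,y\in B_r$'') the lemma is indeed an immediate substitution, and your fallback argument handles the cautious reading in which the null set may depend on the pair of points; that argument is sound, since~\ref{L} gives, outside a null set, a finite Lipschitz constant $l^{B_r}(t)$ and hence continuity of $f_i(t,\cdot)$ and $f_j(t,\cdot)$ on $B_r$, so the inequality extends from a countable dense subset $D\times D\subset B_r\times B_r$ to all of $B_r\times B_r$ and can then be evaluated at $(\phi(t),\psi(t))$. The paper proceeds differently in one respect: it places the countable dense set in the \emph{time} interval $I$ rather than in the ball, applies~\ref{H1} at the finitely many fixed points $\phi(s_n),\psi(s_n)$, intersects the resulting full-measure sets, and then for a.e.\ $t$ lets $s_{n_k}\to t$, using the continuity of $\phi,\psi$ together with~\ref{L} to pass to the limit in the space argument at the fixed time $t$. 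The two routes are of comparable difficulty and both hinge on~\ref{L}; yours has the side benefit of upgrading~\ref{H1} to its a.e.-uniform-in-$(x,y)$ form, which is reusable for any continuous curves, whereas the paper's argument is tailored to the given $\phi,\psi$ and never needs that global upgrade. The measurability remark in your second paragraph is harmless but not needed: the conclusion is a pointwise inequality off a null set, which is meaningful without measurability of either side.
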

\begin{proof}
The proof of this statement is based on the one of Lemma 3.3 in~\cite{longo2021monotone}. Consider two functions $\phi,\psi:I\to B_r$ and let $D=\{s_n \mid n\in\N\}$ be a dense subset of $I$. From~\ref{H1} we know that given $n\in\N$ there is a subset $J_n\subset I$ of full
 measure such that for every $t\in J_n$,
\begin{equation}\label{Jn}
 \big\langle \phi(s_n)-\psi(s_n),f_i\big(t,\phi(s_n)\big)-f_j\big(t,\psi(s_n)\big)\big\rangle \le \alpha^r_{ij}(t)\,|\phi(s_n)-\psi(s_n)|^2+\beta^r_{ij}(t).
\end{equation}
Next we consider the subset $J=\bigcap_{n=1}^\infty J_n\subset I$, also of full measure and fix $t\in J$. From the density of $D$ we find a subsequence
$(s_{n_k})_{k\in N}$ such that $\lim_{k\to\infty} s_{n_k}=t$, and
from~\eqref{Jn} we deduce that for each $k\in\N$
\begin{equation}\label{ineqsn}
\begin{split}
\big\langle \phi(s_{n_k})-\psi(s_{n_k}),f_i\big(t,\phi(s_{n_k})\big)&-f_j\big(t,\psi(s_{n_k})\big)\big\rangle \\
&\le \alpha^r_{ij}(t)\,|\phi(s_{n_k})-\psi(s_{n_k})|^2+\beta^r_{ij}(t).
\end{split}
\end{equation}
Moreover, from~\ref{L} and the continuity of the functions $\phi$ and $\psi$ we obtain
\begin{align*}
\lim_{k\to\infty}f_i\big(t,\phi(s_{n_k})\big)&=f_i\big(t,\phi(t)\big),\quad\text{and}\quad
\lim_{k\to\infty}f_j\big(t,\psi(s_{n_k})\big)=f_j\big(t,\psi(t)\big),
\end{align*}
which together with~\eqref{ineqsn} yields~\eqref{eq:one_sided_Lips_TIME} for $t\in J$, and finishes the proof.
\end{proof}

Now, we proceed to proving a differential inequality that will be the fundamental block to subsequently construct our comparison argument.

\begin{lemma}\label{lem:error_ij_diff_inequality}
Assume~\ref{H1} holds and that for every $i=1,\dots,N$ there is a bounded absolutely continuous function $\sigma_i:\R\to\R^M$, such that $\sigma(t)=(\sigma_1(t),\dots,\sigma_N(t))$ solves~\eqref{eq1}.
Then, for every $i,j=1,\dots,N$, the following inequality holds,
\begin{equation}
\begin{split}
\frac{1}{2}\frac{d}{dt}&\,|\sigma_i(t)-\sigma_j(t)|^2\le\delta_{ij}(t)\,|\sigma_i(t)-\sigma_j(t)|^2+\beta^\rho_{ij}(t)+\\[-2pt]
&\qquad\quad+\frac{1}{2}\sum_{\substack{k=1\\k\neq i,j}}^N\big(a_{jk}(t)-a_{ik}(t)\big)\,\big(|\sigma_i(t)-\sigma_k(t)|^2-|\sigma_j(t)-\sigma_k(t)|^2\big)
\end{split}
\end{equation}
where 
\begin{equation}
\delta_{ij}(t)=\alpha^\rho_{ij}(t)-\Big(a_{ij}(t)+a_{ji}(t)
    +\frac{1}{2}\sum_{\substack{k=1\\k\neq i,j}}^N\big(a_{ik}(t)+a_{jk}(t)\big)\Big).
\end{equation}
\end{lemma}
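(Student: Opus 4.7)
The plan is to differentiate $|\sigma_i(t)-\sigma_j(t)|^2$ pointwise almost everywhere, substitute the coupled evolution, bound the vector-field contribution with the help of Lemma~\ref{lem:two_nodes_assumptions_from_points_to_functions}, and reorganize the coupling terms by means of a polarization identity. Since each $\sigma_i$ is bounded, I can pick $\rho>0$ with $\sigma_i(t)\in B_\rho$ for every $i=1,\dots,N$ and every $t\in\R$. The map $t\mapsto|\sigma_i(t)-\sigma_j(t)|^2$ is then absolutely continuous, and at every differentiability point one has
\[
\tfrac{1}{2}\tfrac{d}{dt}|\sigma_i-\sigma_j|^2=\langle\sigma_i-\sigma_j,\dot\sigma_i-\dot\sigma_j\rangle.
\]
Substituting the right-hand side of \eqref{eq1} splits this inner product into three pieces: the difference $f_i(t,\sigma_i)-f_j(t,\sigma_j)$ and the two coupling sums at nodes $i$ and $j$.

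For the first piece I would invoke Lemma~\ref{lem:two_nodes_assumptions_from_points_to_functions} with $\phi=\sigma_i$, $\psi=\sigma_j$ on $I=\R$, yielding the almost everywhere bound $\alpha^\rho_{ij}(t)\,|\sigma_i-\sigma_j|^2+\beta^\rho_{ij}(t)$. For the two coupling sums I would first peel off the indices $k=i$ and $k=j$: the diagonal terms vanish, while the $k=j$ contribution from the $i$-th sum and the $k=i$ contribution from the $j$-th sum jointly produce $-(a_{ij}(t)+a_{ji}(t))\,|\sigma_i-\sigma_j|^2$.

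The core step is to handle each remaining index $k\ne i,j$ by writing $\sigma_k-\sigma_i=(\sigma_k-\sigma_j)-(\sigma_i-\sigma_j)$ and then applying the polarization identity $\langle u,v\rangle=\tfrac{1}{2}(|u|^2+|v|^2-|u-v|^2)$ in order to rewrite both $\langle\sigma_i-\sigma_j,\sigma_k-\sigma_i\rangle$ and $\langle\sigma_i-\sigma_j,\sigma_k-\sigma_j\rangle$ entirely in terms of $|\sigma_i-\sigma_j|^2$, $|\sigma_i-\sigma_k|^2$, and $|\sigma_j-\sigma_k|^2$. A short collection of terms then shows that the $k$-th summand contributes $-\tfrac{1}{2}(a_{ik}(t)+a_{jk}(t))\,|\sigma_i-\sigma_j|^2+\tfrac{1}{2}(a_{jk}(t)-a_{ik}(t))\,(|\sigma_i-\sigma_k|^2-|\sigma_j-\sigma_k|^2)$.

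Assembling all contributions produces exactly $\delta_{ij}(t)$ as the coefficient of $|\sigma_i-\sigma_j|^2$, the additive $\beta^\rho_{ij}(t)$, and the advertised mixing sum. The only real difficulty is the bookkeeping in the polarization step: keeping the signs straight so that the mismatched couplings $a_{jk}(t)-a_{ik}(t)$ pair with the error differences $|\sigma_i-\sigma_k|^2-|\sigma_j-\sigma_k|^2$ and so that the symmetric combination $a_{ik}(t)+a_{jk}(t)$ feeds into the dissipative coefficient $\delta_{ij}(t)$. Beyond that, the argument is a direct substitution followed by term collection.
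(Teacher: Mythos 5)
Your proposal is correct and follows essentially the same route as the paper's proof: differentiate $|\sigma_i-\sigma_j|^2$ almost everywhere, bound the vector-field term via Lemma~\ref{lem:two_nodes_assumptions_from_points_to_functions} with $\rho$ fixed by the boundedness of $\sigma$, and rewrite the coupling inner products with the identity $2\langle u,v\rangle=|u|^2+|v|^2-|u-v|^2$ before collecting coefficients. Your per-$k$ contribution and the resulting $\delta_{ij}(t)$ match the paper's computation exactly.
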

\begin{proof}
By definition, we have that 
\begin{equation}
\begin{split}
    \frac{1}{2}\frac{d}{dt}|\sigma_i(t)-\sigma_j(t)|^2&=\big\langle\sigma_i(t)-\sigma_j(t),f_i\big(t,\sigma_i(t)\big)-f_j\big(t,\sigma_j(t)\big)\big\rangle+\\
    &\qquad\qquad+\big\langle\sigma_i(t)-\sigma_j(t),\sum_{k=1}^N a_{ik}(t)\big(\sigma_k(t)-\sigma_i(t)\big)\big\rangle+\\
    &\qquad\qquad-\big\langle\sigma_i(t)-\sigma_j(t),\sum_{k=1}^N a_{jk}(t)\big(\sigma_k(t)-\sigma_j(t)\big)\big\rangle.
\end{split} 
\end{equation}
From Lemma~\ref{eq:one_sided_Lips_TIME} and the boundedness of $\sigma(t)$, we immediately have that for almost every $t\in\R$,
\begin{equation}
\big\langle\sigma_i(t)-\sigma_j(t),f_i\big(t,\sigma_i(t)\big)-f_j\big(t,\sigma_j(t)\big)\big\rangle\le \alpha^\rho_{ij}(t)\,|\sigma_i(t)-\sigma_j(t)|^2+\beta^\rho_{ij}(t),
\end{equation}
for some $\rho>0$. On the other hand, recalling that $2\langle x,y\rangle=|x|^2+|y|^2-|y-x|^2$, the following chain of inequalities holds true
\begin{equation}
\begin{split}    &\big\langle\sigma_i(t)-\sigma_j(t),\,
    \sum_{k=1}^N\big[ a_{ik}(t)\big(\sigma_k(t)-\sigma_i(t)\big)- a_{jk}(t)\big(\sigma_k(t)-\sigma_j(t)\big)\big]\big\rangle\\
    &\qquad=-\big(a_{ij}(t)+a_{ji}(t)\big)\,|\sigma_i(t)-\sigma_j(t)|^2-\sum_{\substack{k=1\\k\neq j}}^Na_{ik}(t)\big\langle\sigma_i(t)-\sigma_j(t),\,
    \sigma_i(t)-\sigma_k(t)\big\rangle\\
    &\qquad\qquad\quad-\sum_{\substack{k=1\\k\neq i}}^Na_{jk}(t)\big\langle\sigma_i(t)-\sigma_j(t),\,
    \sigma_k(t)-\sigma_j(t)\big\rangle\\
    &\qquad=-\big(a_{ij}(t)+a_{ji}(t)\big)\,|\sigma_i(t)-\sigma_j(t)|^2+\\
    &\qquad\qquad\quad-\,\sum_{\substack{k=1\\k\neq j}}^N\frac{a_{ik}(t)}{2}\big(|\sigma_i(t)-\sigma_j(t)|^2+|\sigma_i(t)-\sigma_k(t)|^2-|\sigma_k(t)-\sigma_j(t)|^2\big)+\\
    &\qquad\qquad\quad-\sum_{\substack{k=1\\k\neq i}}^N\frac{a_{jk}(t)}{2}\,\big(|\sigma_i(t)-\sigma_j(t)|^2+|\sigma_k(t)-\sigma_j(t)|^2-|\sigma_i(t)-\sigma_k(t)|^2\big)\\
    &\qquad=-\Big(a_{ij}(t)+a_{ji}(t)+\frac{1}{2}\sum_{\substack{k=1\\k\neq i}}^N\big(a_{ik}(t)+a_{jk}(t)\big)\Big)\,|\sigma_i(t)-\sigma_j(t)|^2\\
    &\qquad\qquad\quad+\frac{1}{2}\sum_{\substack{k=1\\k\neq i,j}}^N\big(a_{jk}(t)-a_{ik}(t)\big)\,\big(|\sigma_i(t)-\sigma_k(t)|^2-|\sigma_j(t)-\sigma_k(t)|^2\big)
\end{split}
\end{equation}
Gathering all the previous formulas we obtain the sought-for inequality.
\end{proof}

In order to discuss the question of synchronization for systems like~\eqref{eq1}, we still need two more properties besides~\ref{H1}. First of all, we shall assume that for every $i=1,\dots,N$, there is a reference trajectory $\sigma_i:\R\to\R^M$, and a tubular  neighborhood around it, towards which the dynamics of the node $i$ converges as time increases. If the trajectories $\sigma_i(\cdot)$ are in fact (locally) attractive, then we shall talk of synchronization of attracting trajectories. In order to rigorously present our assumption, let us recall the notion of nonautonomous set.

\begin{definition}
    A nonautonomous set is a subset of the extended phase space $\U\subset\R\times\R^M$. A $t$-fiber of $\U$ is defined as $\U_t=\{x\in\R^M\mid (t,x)\in\U\}$. A nonautonomous set is called forward invariant if  $x(t,t_0,\U_{t_0})\subseteq \U_t$ for all $t>t_0$. In general, $\U$ is said to have a topological
property (such as compactness or closedness) if each fiber of $\U$ has this property. We shall say that this property is uniform if it is uniformly true for every fiber.
\end{definition}
Our assumption on local uniform ultimate boundedness of solutions reads as, 
\begin{enumerate}[label=\upshape(\textbf{H2}),leftmargin=27pt,itemsep=2pt]
\item\label{H2} 
consider~\eqref{eq1} and assume that there are $\U\in\R\times\R^M$, $\mu\ge0$, and for every $i=1,\dots,N$ there is a bounded locally absolutely continuous function $\sigma_i:\R\to\R^M$ such that $\sigma(t)=(\sigma_1(t),\dots,\sigma_N(t))^\top\in\U_t$ solves~\eqref{eq1}, $\sup_{i=1,\dots, N}\|\sigma_i(\cdot)\|_\infty<\rho$ for some $\rho>0$, and if additionally $x(\cdot,t_0,\overline x)=(x_1(\cdot,t_0,\overline x_1),\dots, x_N(\cdot,t_0,\overline x_N))^\top$ solves~\eqref{eq1} with initial conditions $x_i(t_0)=\overline x_i\in\U_{t_0}$, then $x(\cdot,t_0,\overline x)$ is defined for all $t\ge t_0$ and 
\begin{equation}\label{eq:uniform_ultimate_boundedness}
\limsup_{t\to\infty} |x_i(t,t_0,\overline x_i)-\sigma_i(t)|\le\frac{\mu}{3},\quad\text{for all }i=1,\dots,N.
\end{equation}
\end{enumerate}



In Section~\ref{sec:suff_cond_attr}, we provide sufficient conditions for~\ref{H2} to hold true. The assumption \ref{H2} can be alternatively read as the existence of an inflowing invariant open ball of diameter $\mu>0$ for the networked system. Indeed, if this is true a globally defined bounded solution of \eqref{eq1} can be obtained as pullback limit of any initial condition on the boundary of such ball.

Furthermore, we shall assume that heterogeneity of the nodes can be uniformly bounded on compact intervals of time, i.e.~
\begin{enumerate}[label=\upshape(\textbf{H3}),leftmargin=27pt,itemsep=2pt]
\item\label{H3} the set $\{\beta_{ij}^\rho(\cdot+t)\in L^1_{loc}\mid t\in\R,\, i,j=1,\dots,N\}$ is $L^1_{loc}$-bounded, and call 
\begin{equation}
0\le\mu_1:=\sup_{\tau\in\R}\int_\tau^{\tau+1}|\beta(s)|\, ds<\infty,
\end{equation}
where $\beta(t)=(2\beta^\rho_{ij}(t))_{i,j=1,\dots,N,\,i<j}^\top$.
\end{enumerate}




\subsection{Synchronization up to a constant of the entire network}
The following theorem is our main result of synchronization up to a constant for heterogeneous time-dependent linearly coupled networks. Note that, due to assumption~\ref{H2}, any function $x(t)=(x_1(t),\dots,x_N(t))$ which solves~\eqref{eq1} with initial data in $\U$ also satisfies~\eqref{eq:uniform_ultimate_boundedness}. 
Hence, in order to investigate the synchronization of the system~\eqref{eq1} for initial conditions of the nodes in $\U$, it is sufficient to study the asymptotic behavior of $|\sigma_i(t)-\sigma_j(t)|$ for all $i,j=1,\dots,N$. 

\begin{theorem}\label{thm:sync_up_to_constant}
Assume~\ref{H1},~\ref{H2} and~\ref{H3} hold and fix any $M>\mu_1$. 
If there is  $t_0\in\R$ such that for all $i,j=1,\dots,N$, with $i<j$, and for almost every $t>t_0$,
\begin{equation}\label{eq:cond_sync_1}
\begin{split}
\delta_{ij}(t):= \alpha^\rho_{ij}(t)-\Big(a_{ij}(t)+a_{ji}(t)
    +\frac{1}{2}\sum_{\substack{k=1\\k\neq i,j}}^N\big(a_{jk}(t)+a_{ik}(t)\big)\Big)<0\\
\end{split}    
\end{equation}
and additionally 
\begin{equation}\label{eq:cond_sync_2}
\overline \gamma:=\inf_{t\in\R}\min_{\substack{i,j=1,\dots,N,\\ i\neq j}}\Big\{\underbrace{2|\delta_{ij}(t)|-\sum_{\substack{k=1\\k\neq i,j}}^N\big|a_{jk}(t)-a_{ik}(t)\big|}_{=:\gamma_{ij}(t)}\Big\}>-\log\left(1-\frac{\mu_1}{M}\right),
\end{equation}
then, for every $\ep>0$ there is $T(\ep)=\frac{1}{\gamma}\ln\left(4\rho^2/\ep\right)>0$ such that for all $i,j=1,\dots,N$
\begin{equation}
|\sigma_i(t)-\sigma_j(t)|^2< \ep+M,\qquad\text{for } t-t_0>T(\ep).
\end{equation}
In other words,~\eqref{eq1} synchronizes up to a constant in finite time.

\end{theorem}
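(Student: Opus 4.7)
The plan is to reduce the synchronization question to the asymptotic stability analysis of a linear nonautonomous inhomogeneous inequality bounding from above the vector of squared errors $\xi(t)=(\xi_{ij}(t))_{i<j}\in\R_+^{N(N-1)/2}$, along the blueprint announced in the introduction. First, by~\ref{H2} the reference trajectories $\sigma_i$ are uniformly bounded by $\rho$ and satisfy~\eqref{eq1}, so Lemma~\ref{lem:error_ij_diff_inequality} applies to each pair $i<j$ and yields, a.e.~on $\R$, the differential inequalities
\begin{equation*}
\dot\xi_{ij}(t)\le 2\delta_{ij}(t)\,\xi_{ij}(t)+2\beta^\rho_{ij}(t)+\sum_{\substack{k=1\\k\neq i,j}}^N\bigl(a_{jk}(t)-a_{ik}(t)\bigr)\bigl(\xi_{ik}(t)-\xi_{jk}(t)\bigr).
\end{equation*}
I then majorize the sign-indefinite cross terms via $|\xi_{ik}-\xi_{jk}|\le\max_{m<n}\xi_{mn}$ and assemble the resulting inequalities into a closed vector comparison $\dot\xi(t)\le B(t)\xi(t)+\beta(t)$ on the positive cone $\R_+^{N(N-1)/2}$ (whose invariance is Proposition~\ref{prop:dyn_sys_from_difference}), where $B(t)$ has diagonal entries $2\delta_{ij}(t)$ and nonnegative off-diagonals built from $|a_{jk}(t)-a_{ik}(t)|$. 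The initial datum satisfies $\|\xi(t_0)\|_\infty\le 4\rho^2$ by~\ref{H2}.

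Next, condition~\eqref{eq:cond_sync_1} makes the diagonal entries $2\delta_{ij}(t)$ strictly negative a.e., while~\eqref{eq:cond_sync_2} rewrites precisely as the uniform row-diagonal-dominance estimate $2|\delta_{ij}(t)|-\sum_{k\neq i,j}|a_{jk}(t)-a_{ik}(t)|\ge\overline\gamma>0$ for $B(t)$. Invoking Fink's sufficient criterion for locally integrable coefficients (recalled in Remark~\ref{rmk:ED}(ii)), the homogeneous system $\dot y=B(t)y$ admits an exponential dichotomy on $\R$ with invariant projector $P\equiv Id$, some constant $K\ge 1$, and decay rate $\overline\gamma$, equivalently $\Sigma(B)\subset(-\infty,-\overline\gamma]$.

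Then, letting $y(t)$ solve $\dot y=B(t)y+\beta(t)$ with $y(t_0)=\xi(t_0)$, the standard positive-cone comparison principle gives $\xi_{ij}(t)\le y_{ij}(t)$ componentwise for all $t\ge t_0$. Variation of constants combined with the dichotomy estimate yields
\begin{equation*}
\|y(t)\|_\infty\le K\,e^{-\overline\gamma(t-t_0)}\,4\rho^2+K\int_{t_0}^{t}e^{-\overline\gamma(t-s)}\,\|\beta(s)\|_\infty\,ds.
\end{equation*}
Partitioning the convolution integral into unit-length subintervals and applying~\ref{H3} to bound each piece by $\mu_1$, the forced term is controlled by the geometric sum $K\mu_1/(1-e^{-\overline\gamma})$. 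The threshold $\overline\gamma>-\log(1-\mu_1/M)$ in~\eqref{eq:cond_sync_2} is precisely equivalent to $\mu_1/(1-e^{-\overline\gamma})<M$, so the forcing contribution stays strictly below $M$ uniformly in time. Absorbing the exponentially decaying transient into an arbitrary $\ep>0$ by choosing $T(\ep)=\overline\gamma^{-1}\log(4\rho^2/\ep)$ delivers $\xi_{ij}(t)<\ep+M$ for all $i<j$ and $t-t_0>T(\ep)$, which is the claim.

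The delicate point of the plan is the construction carried out in the first step: the bounding matrix $B(t)$ must be designed so that its diagonal-dominance rate matches the \emph{sharp} coefficient $\gamma_{ij}(t)=2|\delta_{ij}(t)|-\sum_{k\neq i,j}|a_{jk}(t)-a_{ik}(t)|$ appearing in~\eqref{eq:cond_sync_2}. A crude componentwise absolute-value majorization of $(a_{jk}-a_{ik})(\xi_{ik}-\xi_{jk})$ would double the off-diagonal mass of $B(t)$ and degrade the dominance rate; keeping the factor right (so that Fink's criterion is applied to a matrix whose spectrum actually lies in $(-\infty,-\overline\gamma]$) requires either the max-type comparison indicated above, a telescoping rearrangement of the signed off-diagonals, or an equivalent weighted-$\ell^\infty$ bound. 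This bookkeeping is precisely what links hypothesis~\eqref{eq:cond_sync_2} to the geometric-series constant $\mu_1/(1-e^{-\overline\gamma})$ controlling the asymptotic error size in the final step.
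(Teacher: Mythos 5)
Your plan is essentially the paper's proof: use Lemma~\ref{lem:error_ij_diff_inequality} plus a sign-aware splitting of the cross terms to obtain a quasimonotone (nonnegative off-diagonal) linear inhomogeneous comparison system whose row-dominance margin is exactly $\gamma_{ij}(t)$, then conclude by variation of constants and the unit-interval partition from~\ref{H3}, with $\overline\gamma>-\log(1-\mu_1/M)$ translated into $\mu_1/(1-e^{-\overline\gamma})<M$. You also correctly identify, and correctly resolve, the one delicate bookkeeping point: a crude absolute-value majorization would double the off-diagonal row sums, and the fix the paper uses is your ``telescoping'' option, namely bounding $(a_{jk}-a_{ik})(\xi_{ik}-\xi_{jk})$ by $\max\{a_{jk}-a_{ik},0\}\,\xi_{ik}+\max\{a_{ik}-a_{jk},0\}\,\xi_{jk}$, so that each row's off-diagonal sum is $\sum_{k\neq i,j}|a_{jk}(t)-a_{ik}(t)|$ and the dominance rate is exactly $\gamma_{ij}(t)$; your alternative $\ell^\infty$/max-type bound would work just as well.

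Two points need tightening. First, you introduce a dichotomy constant $K\ge 1$ from Fink's criterion and then silently drop it: with a generic $K$ the forcing term is only bounded by $K\mu_1/(1-e^{-\overline\gamma})$, which need not be below $M$, and the transient estimate would require $T(\ep)=\overline\gamma^{-1}\ln(4K\rho^2/\ep)$ rather than the $T(\ep)$ in the statement. The conclusion with the stated constants needs $K=1$, and this is precisely what row dominance provides (equivalently, the $\ell^\infty$ logarithmic norm of the bounding matrix is at most $-\overline\gamma$), yielding $\|U(t,s)\|\le e^{-\overline\gamma(t-s)}$ for $t\ge s\ge t_0$; the paper states and uses this sharper estimate instead of a generic dichotomy pair $(K,\overline\gamma)$, and your argument should too. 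Second, invoking ``the standard positive-cone comparison principle'' is a shortcut: in the Carathéodory setting the componentwise ordering $\xi(t)\le u(t)$ requires an argument (the paper runs an $\ep$-perturbed first-crossing contradiction reduced to scalar comparisons via Olech--Opial), though since your bounding right-hand side is quasimonotone this is a matter of detail rather than of substance.
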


\begin{proof}
Firstly, let us  introduce the continuous function $\eta:\R\to\R_+$ defined by 
\begin{equation}
\eta(a)=
\begin{cases}
0&\text{if } a\le0,\\
a&\text{if } a>0,
\end{cases}
\end{equation}
which will be used multiple times within this proof. 
Consider the synchronization errors given by the vector of $N(N-1)/2$ components 
\begin{equation}\xi(t)=\big(\xi_{ij}(t)\big)_{i,j=1,\dots,N,\,i<j}^\top,\qquad\text{where}\qquad\xi_{ij}(t)=|\sigma_i(t)-\sigma_j(t)|^2.
\end{equation}
Thanks to Lemma~\ref{lem:error_ij_diff_inequality}, we have that $\dot\xi(t)$ is a \emph{sub-function} with respect to the initial value problem $u'= E(t)u+\beta(t)$, $u(t_0)=\xi(t_0)$, i.e.,
\begin{equation}\label{eq:control}
\dot \xi(t)\le E(t)\xi(t)+\beta(t),\quad\text{for all }t>t_0,\,t_0\in\R,
\end{equation}
where 
$\beta(t)=(2\beta^\rho_{ij}(t))_{i,j=1,\dots,N,\,i<j}^\top$ (see~\ref{H1}) and $E(t)$ is the time-dependent square matrix defined row-wise as follows: let us label each of the $N(N-1)/2$ rows by $e^{ij}\in\R^{N(N-1)/2}$, where $i,j=1,\dots,N,\,i<j$, i.e.
\begin{equation}
e^{ij}(t)=\big(e^{ij}_{lk}(t)\big)_{l,k=1,\dots,N,\,l<k};
\end{equation}
then, fixed $i,j=1,\dots,N,\,i<j$, we have that for $l,k=1,\dots,N,\,l<k$,
\begin{equation}\label{eq:entries_E}
e^{ij}_{lk}(t)=\begin{cases}
2\delta_{ij}(t) &\text{if }l=i \text{ and } k=j,\\[1ex]
\eta\big(a_{jk}(t)-a_{ik}(t)\big)&\text{if }l=i\text{ and } k\neq j,\\[1ex]
\eta\big(a_{ik}(t)-a_{jk}(t)\big)&\text{if }l=j,\\[1ex]
0&\text{otherwise}.
\end{cases}
\end{equation}
Note that the entries of $E$ outside the diagonal are either positive or null. 
In other words, the coefficients obtained through the inequality in Lemma~\ref{lem:error_ij_diff_inequality}, have been further bounded from above in the sense that only the terms with positive sign are left.
Moreover, due the inequality in~\eqref{eq:cond_sync_1}, the entries on the diagonal, that is $e^{ij}_{ij}(t)=2\delta_{ij}(t)$, are strictly negative and the matrix is row-dominant \cite[Definition 7.10]{fink1974almost} for $t\ge t_0$ .
Consequently, the linear homogeneous problem $\dot u=E(t)u$ has dichotomy spectrum contained in $(-\infty,0)$ for $t\ge t_0$ (and thus it admits an exponential dichotomy with projector the identity on $[t_0,\infty)$)~\cite[Theorem 7.16)]{fink1974almost}. In fact a stronger property of exponential decay holds, i.e., denoted by $U(t,s)$ the principal matrix solution of $\dot u=E(t)u$ at $s\in\R$, it holds that,
\begin{equation}\label{eq:13/11-16:46}
\|U(t,s)\|\le e^{-\overline\gamma(t-s)},\quad \text{for all } t\ge s\ge t_0.
\end{equation}
Incidentally, this means that the solution of the non-homeogeneous linear problem $u'= E(t)u+\beta(t)$, $u(t_0)=\xi(t_0)$ which is given by the variation of constants formula
\begin{equation}\label{eq:18/11-11:17}
u\big(t,t_0,\xi(t_0)\big)=U(t,t_0)\xi(t_0)+\int_{t_0}^tU(t,s)\beta(s)\,ds,   
\end{equation}
where the integral is understood component-wise, is defined for all $t\ge t_0$ thanks to~\eqref{eq:13/11-16:46}.

We  shall prove that~\eqref{eq:control} implies $\xi(t)\le u\big(t,t_0,\xi(t_0)\big)$ for all $t\ge t_0$. Considering $\ep>0$ and $\overline \ep= \ep (1,\dots,1)\in\R^{N(N-1)/2}$, note that from the continuity with respect to initial data, it is enough to prove that $\xi(t)\ll u\big(t,t_0,\xi(t_0)+\overline \ep\big)=:u(t, \ep)$  for all $t\ge t_0$. 
Assume, on the contrary, that there is a first time $t_1>t_0$ for which the equality holds for some component. By simplicity of notation, let the first be such a component. Then,
\begin{equation}\label{eq:18/11-12:04}
\xi_{ij}(t)< u_{ij}(t,\ep)\quad\text{for }t\in[t_0,t_1) \qquad\text{and}\qquad \xi_{12}(t_1)= u_{12}(t_1, \ep).
\end{equation}
Denote by $g_\xi,g_u:\R\times\R\to\R$, the Carathéodory functions defined by
\begin{equation}\begin{split}
g_\xi(t,v)=2\delta_{12}(t)v &+\sum_{\substack{k=1\\k\neq 1,2}}^N\Big[
\eta\big(a_{2k}(t)-a_{1k}(t)\big)\xi_{1k}(t)+\eta\big(a_{1k}(t)-a_{2k}(t)\big)\xi_{2k}(t)\Big]
\end{split}
\end{equation}
\begin{equation}\begin{split}
g_u(t,v)=2\delta_{12}(t)v&+\!\sum_{\substack{k=1\\k\neq 1,2}}^N\!\Big[
\eta\big(a_{2k}(t)-a_{1k}(t)\big)u_{1k}(t,\ep)+\eta\big(a_{1k}(t)-a_{2k}(t)\big) u_{2k}(t,\ep)\Big] 
\end{split}
\end{equation}
and consider the scalar Carathéodory differential problems $\dot v=g_\xi(t,v)$, $\dot v=g_u(t,v)$. Due to Lemma~\ref{lem:error_ij_diff_inequality} and the assumptions in~\eqref{eq:18/11-12:04}, the following scalar differential inequalities holds true,
\begin{equation}
\dot\xi_{12}(t)\le g_\xi\big(t,\xi_{12}(t)\big)\le g_u\big(t,\xi_{12}(t)\big),\quad\text{for all }t\in[t_0,t_1].
\end{equation}
Hence, the comparison theorem for Carath\'{e}odory scalar differential equations (see Olech and Opial~\cite{olech1960inegalite}) yields
\begin{equation}
\xi_{12}(t)\le v\big(t,t_0,\xi_{12}(t_0)\big)<v\big(t,t_0,\xi_{12}(t_0)+\ep\big)=u_{12}(t,\ep)
\end{equation}
for every $t\in[t_0,t_1]$ and in particular for $t=t_1$. However, this contradicts~\eqref{eq:18/11-12:04}. Hence, it must be that such $t_1\in\R$ does not exist and $\xi(t)\ll u\big(t,t_0,\xi(t_0)+\overline \ep\big)$  for all $t\ge t_0$. 
In turn, the arbitrariness on $\ep>0$ gives us the sought for ordering of vector solutions $\xi(t)\le u\big(t,t_0,\xi(t_0)\big)$ for all $t\ge t_0$. Then, from~\eqref{eq:18/11-11:17} we immediately obtain that
\begin{equation}\label{eq:28/11-16:41}
|\xi(t)|\le \|U(t,t_0)\|\,|\xi(t_0)|+\int_{t_0}^t\|U(t,s)\|\,|\beta(s)|\,ds.
\end{equation}
Concerning the first term, we have that $\|U(t,t_0)\|\,|\xi(t_0)|\le 4\rho^2e^{-\overline \gamma(t-t_0)}$, thanks to~\eqref{eq:13/11-16:46} and~\ref{H2}. Therefore, for any $\ep>0$, it holds 
\begin{equation}\label{eq:28/11-16:42}
\|U(t,t_0)\|\,|\xi(t_0)|<\ep,\qquad\text{whenever } t-t_0>\frac{1}{\overline\gamma}\ln\left(\frac{4\rho^2}{\ep}\right).
\end{equation}
We shall, thus, analyze the second term, the integral $\int_{t_0}^t\|U(t,s)\|\,|\beta(s)|\,ds$. Note that, thanks to~\ref{H3}, $\mu_1:=\sup_{\tau\in\R}\int_\tau^{\tau+1}|\beta(s)|\, ds<\infty$. Then, we have that 
\begin{equation} \label{eq:VOC_nonhomogeneous_integral_estimate}
\begin{split}
\int_{t_0}^t\|U(t,s)\|\, |\beta(s)|\,ds&\le \int_{t_0}^t |\beta(s)|e^{-\overline\gamma(t-s)}\,ds= \int_{0}^{t-t_0} \!\!\!|\beta(t-u)|e^{-\overline\gamma u}\,du\\
&\le \int_{0}^{\infty}\!\! |\beta(t-u)|e^{-\overline\gamma u}\,du
\le \sum_{n=0}^\infty\int_{n}^{n+1}\!\!\!\!|\beta(t-u)|e^{-\overline\gamma u}\,du \\
&\le  \sum_{n=0}^\infty e^{-\overline\gamma n}\int_{n}^{n+1}\!\!\!|\beta(t-u)| \,du=\frac{\mu_1 }{1-e^{-\overline\gamma}}.
\end{split}
\end{equation}
Furthermore, since by assumption $\overline\gamma>-\log(1-\mu_1/M)$, 
then
\begin{equation}
\frac{\mu_1 }{1-e^{-\overline\gamma}}<M.
\end{equation}
This inequality, together with~\eqref{eq:28/11-16:41} and~\eqref{eq:28/11-16:42}
gives the sought-for result.
\end{proof}

\begin{remark}
    A closer look at the proof of Theorem~\ref{thm:sync_up_to_constant} reveals that the fundamental step was to show that, under the given assumptions, the homogeneous linear system $\dot u =E(t) u$ has dichotomy spectrum contained in $(-\infty,0)$. Yet, the property of row-dominance is only a sufficient condition for the existence of an exponential dichotomy with projector the identity. 
    Therefore, it is worth noting that, although we privileged row-dominance in order to give a set of easily computable inequalities, other sufficient conditions may be just as effective in guaranteeing synchronization. 
    For example see also Proposition 1.5 in \cite{coppel2006dichotomies} which uses the time-dependent eigenvalues.
    Another option is the weaker requirement that the Lyapunov spectrum is contained in $(-\infty,0)$. Numerical methods to approximate the Lyapunov and the dichotomy spectrum under the assumption of integral separation can be found in~\cite{dieci2002lyapunov2,dieci2002lyapunov,dieci2007lyapunov,froyland2013computing}. See also Remark~\ref{rmk:ED} for further details. 
\end{remark}

We notice that in our main Theorem~\ref{thm:sync_up_to_constant}, the synchronization error might be largely influenced by the heterogeneity of the nodes. In the next corollary we address such an issue provided that the network has a global coupling strength. 


\begin{corollary}\label{cor:global_coupling}
    Suppose that Theorem~\ref{thm:sync_up_to_constant} holds for~\eqref{eq1}. Then, the same Theorem holds for the system with global coupling strength
    \begin{equation}\label{eq1g}
        \dot x_i = f_i(t,x_i) + c\sum_{k=1}^N a_{ik}(t)(x_k-x_i), \qquad c\geq1
\end{equation}
but with a synchronization error
\begin{equation}
|\sigma_i(t)-\sigma_j(t)|^2< \ep+\frac{1}{c}M.
\end{equation}
\end{corollary}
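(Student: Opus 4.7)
The plan is to apply Theorem~\ref{thm:sync_up_to_constant} to~\eqref{eq1g}, viewed as~\eqref{eq1} with the adjacency matrix $A(t)$ replaced by $cA(t)$. The assumptions~\ref{H1} and~\ref{H3} depend only on the internal dynamics $f_i$, so the associated $\alpha_{ij}^\rho$, $\beta_{ij}^\rho$ and the constant $\mu_1$ are unchanged for~\eqref{eq1g}; the assumption~\ref{H2} is inherited from the standing hypothesis of Theorem~\ref{thm:sync_up_to_constant}.

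First I would redo the derivation of Lemma~\ref{lem:error_ij_diff_inequality} for~\eqref{eq1g}, obtaining the bounding inequality $\dot\xi(t)\le\tilde E(t)\xi(t)+\beta(t)$, where $\tilde E(t)$ has diagonal entries $2\tilde\delta_{ij}(t)=2\alpha_{ij}^\rho(t)-2cS_{ij}(t)$, with $S_{ij}(t)=a_{ij}(t)+a_{ji}(t)+\tfrac12\sum_{k\neq i,j}(a_{ik}(t)+a_{jk}(t))$, and off-diagonal entries equal to $c$ times those of $E(t)$ in the proof of Theorem~\ref{thm:sync_up_to_constant}. A short computation gives the identity $\tilde\gamma_{ij}(t)=c\,\gamma_{ij}(t)+2\alpha_{ij}^\rho(t)(c-1)$, so that the scaling by $c\geq 1$ enlarges the row-dominance gap and, consequently, enhances the exponential decay of the principal matrix $\tilde U(t,s)$ of $\dot u=\tilde E(t)u$.

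I would then apply Theorem~\ref{thm:sync_up_to_constant} to~\eqref{eq1g} with the free parameter chosen as $\tilde M=M/c$. The proof carries over verbatim with $\tilde E(t)$ in place of $E(t)$, and the integral estimate~\eqref{eq:VOC_nonhomogeneous_integral_estimate} becomes
\begin{equation*}
\int_{t_0}^t\|\tilde U(t,s)\|\,|\beta(s)|\,ds\le\frac{\mu_1}{1-e^{-\tilde\gamma}},
\end{equation*}
which, thanks to the enhanced decay rate $\tilde\gamma\geq c\,\overline\gamma$ and the original hypothesis $\overline\gamma>-\log(1-\mu_1/M)$, is strictly less than $M/c$; this yields the improved synchronization error $|\sigma_i(t)-\sigma_j(t)|^2<\varepsilon+M/c$.

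The main technical point is verifying the compatibility $\tilde\gamma>-\log(1-c\mu_1/M)$ from the original condition $\overline\gamma>-\log(1-\mu_1/M)$ together with the scaling $\tilde\gamma\geq c\,\overline\gamma$: this combines the enlarged row-dominance gap of $\tilde E(t)$ with the convexity of the map $x\mapsto -\log(1-x)$. A cleaner perspective is via the time rescaling $\tau=ct$, which recasts~\eqref{eq1g} as~\eqref{eq1} with internal dynamics and heterogeneity coefficient $\beta$ effectively divided by $c$, making the improved bound $M/c$ transparent.
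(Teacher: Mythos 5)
There are two genuine gaps. First, your monotonicity claim for the row-dominance gap does not hold in general. Your own identity $\tilde\gamma_{ij}(t)=c\,\gamma_{ij}(t)+2\alpha^\rho_{ij}(t)(c-1)$ (which, incidentally, already presupposes $\tilde\delta_{ij}(t)<0$, something you never verify) shows that the correction term has the sign of $\alpha^\rho_{ij}$: for $\alpha^\rho_{ij}<0$ you get $\tilde\gamma_{ij}\le c\,\gamma_{ij}$ rather than $\ge$, and since the paper allows signed weights, for $\alpha^\rho_{ij}<0$ even the diagonal condition $\tilde\delta_{ij}<0$ needs an argument. The paper handles precisely this by a case distinction on the sign of $\alpha_{ij}$: for $\alpha_{ij}\le 0$ it replaces $\alpha_{ij}$ by $0$ in \ref{H1}, so that the bounding matrix of \eqref{eq1g} is exactly $cE(t)$, and for $\alpha_{ij}>0$ it proves $\bar\delta_{ij}=\alpha_{ij}-cS_{ij}\le c\,\delta_{ij}$ (using $\alpha_{ij}/c\le\alpha_{ij}$), so that in both cases the errors are compared with $u'=cE(t)u+\beta(t)$. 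Your inequality $\tilde\gamma\ge c\,\overline\gamma$ is only available in the second case.

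Second, and more seriously, the step that produces the improved bound fails. Invoking Theorem~\ref{thm:sync_up_to_constant} with $\tilde M=M/c$ is not even admissible unless $M/c>\mu_1$, which is not assumed, and the ``compatibility'' $\tilde\gamma>-\log\left(1-c\mu_1/M\right)$ cannot be deduced from $\overline\gamma>-\log\left(1-\mu_1/M\right)$ together with $\tilde\gamma\ge c\,\overline\gamma$: for $h(x)=-\log(1-x)$, convexity and $h(0)=0$ give $h(c\mu_1/M)\ge c\,h(\mu_1/M)$, i.e.\ the required threshold grows at least as fast as the gain you have, and for $c\ge M/\mu_1$ the threshold is $+\infty$ while $c\,\overline\gamma$ is finite. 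Equivalently, your claim that $\mu_1/(1-e^{-\tilde\gamma})<M/c$ follows from the enhanced decay is false, since $1-e^{-c\overline\gamma}\le c\,(1-e^{-\overline\gamma})$ goes the wrong way and the left-hand side is bounded by $1$. The paper's mechanism for the factor $1/c$ is different: after reducing to $u'=cE(t)u+\beta(t)$ it rescales time, turning the comparison problem into one of the original type with inhomogeneity $\tfrac1c\beta$, so the smallness enters through the forcing term (hence through the $\mu_1$-type constant), not through a faster decay rate or a smaller choice of $M$. To repair your argument you would need to reproduce that reduction (or substitute in the convolution integral so the $1/c$ multiplies $\beta$), as is done in the paper and, under $\beta\in L^\infty$, in Corollary~\ref{cor:sharp_sync}.
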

\begin{proof}
    The proof is a slight adaptation of the proof of Theorem~\ref{thm:sync_up_to_constant}, therefore, we refer to the notation used there. First, let  $\alpha_{ij}\leq0$ in~\ref{H1}. In such a case, it suffices to consider $\alpha_{ij}=0$. It follows as in the proof of Theorem~\ref{thm:sync_up_to_constant} that the error dynamics $\dot\xi(t)$ is an under function with respect to the initial value problem $u'=cE(t)u+\beta(t)$. In turn, this initial value problem is smoothly equivalent, via time rescaling, to $u'=E(t)u+\frac{1}{c}\beta(t)$. The rest of the proof follows  by the same arguments as in the proof of Theorem~\ref{thm:sync_up_to_constant} but with $\beta(t)$ replaced by $\frac{1}{c}\beta(t)$.
    Next, we consider the case when $\alpha_{ij}>0$. Since we assume that Theorem~\ref{thm:sync_up_to_constant} holds for $c=1$, it necessarily follows that $\Big(a_{ij}(t)+a_{ji}(t)
    +\frac{1}{2}\sum_{\substack{k=1\\k\neq i,j}}^N\big(a_{jk}(t)+a_{ik}(t)\big)\Big)>0$. Let 
    \begin{equation}
    \begin{split}
    \bar\delta_{ij}&\coloneqq\alpha_{ij}-c\Big(a_{ij}(t)+a_{ji}(t)
    +\frac{1}{2}\sum_{\substack{k=1\\k\neq i,j}}^N\big(a_{jk}(t)+a_{ik}(t)\big)\Big)\\    
    &=c\bigg( \frac{\alpha_{ij}}{c}-\Big(a_{ij}(t)+a_{ji}(t)
    +\frac{1}{2}\sum_{\substack{k=1\\k\neq i,j}}^N\big(a_{jk}(t)+a_{ik}(t)\big)\Big) \bigg)\leq c\delta_{ij}.
    \end{split}
    \end{equation}
    By the previous inequality, we now have the same arguments as above for the error dynamics $\dot\xi_{ij}(t)$.
\end{proof}

Analogous results to Corollary~\ref{cor:global_coupling} are known, for example, for the static case with $a_{ij}\geq0$~\cite{pereira2011stability} provided that the internal dynamics are dissipative. Nevertheless, our result also covers the time-varying case and allows for the weights to be negative.

Let us next present a corollary that might be of additional use in practical cases. 
Assume that the function $\beta$ in~\ref{H3} is in fact essentially bounded. Then a stronger result is available that highlights the role of the coupling strength in the network. Particularly, a result of sharp synchronization appears when $\|\beta(\cdot)\|_{L^\infty}=0$.

\begin{corollary}\label{cor:sharp_sync}
Under the assumptions of {\rm Theorem~\ref{thm:sync_up_to_constant}}, if additionally $\beta(\cdot)\in L^\infty$, then, for every $\ep>0$ there is $T(\ep)=\frac{1}{\gamma}\ln\left(4\rho^2/\ep\right)>0$ such that for all $i,j=1,\dots,N$
\begin{equation}
|\sigma_i(t)-\sigma_j(t)|^2< \ep+\frac{\|\beta(\cdot)\|_{L^\infty}}{\overline \gamma},\qquad\text{for } t-t_0>T(\ep).
\end{equation}
Moreover, if the considered system has a global coupling coefficient $c>0$ as in~\eqref{eq1g}, then the previous inequality reads as 
\begin{equation}
|\sigma_i(t)-\sigma_j(t)|^2< \ep+\frac{\|\beta(\cdot)\|_{L^\infty}}{c\,\overline \gamma},\qquad\text{for } t-t_0>T(\ep).
\end{equation}
If $f_i=f$ for all $i=1,\dots,N$, and~\ref{H2} holds with $\mu=0$, then~\eqref{eq1} synchronizes.
\end{corollary}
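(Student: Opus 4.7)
The plan is to mirror the proof of Theorem~\ref{thm:sync_up_to_constant} verbatim up to the variation-of-constants estimate
\[
|\xi(t)| \le \|U(t,t_0)\|\,|\xi(t_0)| + \int_{t_0}^t \|U(t,s)\|\,|\beta(s)|\,ds,
\]
together with the exponential decay $\|U(t,s)\| \le e^{-\overline\gamma(t-s)}$ for $t \ge s \ge t_0$ established there. The first term is bounded exactly as in the original proof: by~\ref{H2} one has $|\xi(t_0)| \le 4\rho^2$, so it is strictly less than $\ep$ whenever $t - t_0 > T(\ep)$. The only substantive change is in the integral estimate: since $\beta(\cdot) \in L^\infty$, the series-type chain of bounds in~\eqref{eq:VOC_nonhomogeneous_integral_estimate} may be replaced by the sharper
\[
\int_{t_0}^t \|U(t,s)\|\,|\beta(s)|\,ds \le \|\beta(\cdot)\|_{L^\infty}\int_{t_0}^t e^{-\overline\gamma(t-s)}\,ds \le \frac{\|\beta(\cdot)\|_{L^\infty}}{\overline\gamma},
\]
which, summed with the previous bound, yields the first inequality of the corollary.

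For the globally coupled system~\eqref{eq1g}, the plan is to reuse the time-rescaling trick from Corollary~\ref{cor:global_coupling}: the error dynamics is a sub-function of $u' = cE(t)u + \beta(t)$, which, up to smooth reparametrization of time, is equivalent to $u' = E(t)u + \tfrac{1}{c}\beta(t)$. Repeating the $L^\infty$-estimate above with $\beta$ replaced by $\beta/c$ produces the improved asymptotic bound $\|\beta(\cdot)\|_{L^\infty}/(c\,\overline\gamma)$, leaving the $\ep$-term and the time $T(\ep)$ unchanged.

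For the final assertion, assume $f_i = f$ for every $i$. By Remark~\ref{rmk:f_i=f_j implies_beta=0}, the admissible choice $\beta_{ij}^\rho(t) \equiv 0$ is available, so $\|\beta(\cdot)\|_{L^\infty} = 0$. The first inequality then collapses to $|\sigma_i(t) - \sigma_j(t)|^2 < \ep$ for $t - t_0 > T(\ep)$, and arbitrariness of $\ep$ gives $|\sigma_i(t) - \sigma_j(t)| \to 0$ as $t \to \infty$ for every pair $i,j$. Selecting $s(t) := \sigma_1(t)$ as the reference trajectory (it solves $\dot x = f(t,x)$ on all of $\R$), the triangle inequality combined with~\ref{H2} at $\mu = 0$ gives $|x_i(t,t_0,\overline x_i) - s(t)| \to 0$ for every $i$, which is precisely the synchronization notion in the second part of Definition~2.4. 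The argument is entirely modular and no genuinely new technical obstacle arises; the only delicate point is singling out a single reference trajectory $s(t)$ in the homogeneous case, which is resolved by the asymptotic collapse of the \emph{a priori} distinct candidates $\sigma_1,\dots,\sigma_N$.
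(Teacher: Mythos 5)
Your treatment of the first two assertions is correct and is exactly the paper's argument: you replace the series estimate~\eqref{eq:VOC_nonhomogeneous_integral_estimate} by the direct bound $\int_{t_0}^t\|U(t,s)\||\beta(s)|\,ds\le\|\beta(\cdot)\|_{L^\infty}\int_0^{t-t_0}e^{-\overline\gamma u}\,du\le\|\beta(\cdot)\|_{L^\infty}/\overline\gamma$, and you handle the global coupling by the same time-rescaling reduction to $u'=E(t)u+\tfrac{1}{c}\beta(t)$ used in Corollary~\ref{cor:global_coupling}. This is precisely how the paper proceeds.

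The last step, however, contains an incorrect claim. You set $s(t):=\sigma_1(t)$ and assert parenthetically that it ``solves $\dot x=f(t,x)$ on all of $\R$''. It does not: $\sigma_1$ is the first component of the entire solution $\sigma$ of the \emph{coupled} system~\eqref{eq1}, so it satisfies $\dot\sigma_1=f(t,\sigma_1)+\sum_{k}a_{1k}(t)(\sigma_k-\sigma_1)$, and the coupling terms vanish identically only if the components $\sigma_i$ coincide for all $t$ --- your estimate gives only asymptotic coincidence, $|\sigma_i(t)-\sigma_j(t)|\to0$. The paper's own proof is silent on this point (it simply takes $\beta_{ij}\equiv0$ and invokes Remark~\ref{rmk:f_i=f_j implies_beta=0}), and what the estimates genuinely deliver is that all node trajectories with data in $\U$ converge to one another and to each $\sigma_i$. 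If you want the conclusion literally in the form of the homogeneous case of Definition~2.4, i.e.\ with a reference trajectory that is an entire solution of the \emph{uncoupled} equation $\dot x=f(t,x)$, you need an additional argument producing such a solution on the synchronization manifold (for instance via dissipativity of $f$ and a pullback construction as in Section~\ref{sec:suff_cond_attr}); it does not follow from designating $\sigma_1$ as $s$. Either drop that parenthetical and state the conclusion as pairwise asymptotic synchrony of all trajectories (matching what the paper actually proves), or add the missing construction.
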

\begin{proof}
The result is an easy consequence of Theorem~\ref{thm:sync_up_to_constant} once~\eqref{eq:VOC_nonhomogeneous_integral_estimate} is changed for 
\begin{equation}
\begin{split}
\int_{t_0}^t\|U(t,s)\|\, |\beta(s)|\,ds&\le \|\beta(\cdot)\|_{L^\infty} \int_{0}^{t-t_0} e^{-\overline\gamma u}\,du=\frac{\|\beta(\cdot)\|_{L^\infty}}{\overline \gamma}\big(1-e^{-\overline \gamma(t-t_0)}\big).
\end{split}
\end{equation}
Then, the case of global coupling is inherited from the proof of Corollary ~\ref{cor:global_coupling}. Finally, the result of sharp synchronization for a network of identical agents follows immediately by taking $\beta_{ij}(t)=0$ for all $t\in\R$ and all $i,j=1,\dots,N$ (see also Remark~\ref{rmk:f_i=f_j implies_beta=0}).
\end{proof}

Our last corollary is concerned with the strengthening of \ref{H1} from a local to a global property, i.e.~when the functions $\alpha^r_{ij},\beta^r_{ij}\in L^1_{loc}(\R,\R)$ do not depend on $r>0$. Note that this is for example the case when identical nodes with a global Lipschitz constant are chosen.
\begin{enumerate}[label=\upshape(\textbf{H1*}),leftmargin=33pt,itemsep=2pt]
\item\label{H1S} Assume that there are functions $\alpha_{ij},\beta_{ij}\in L^1_{loc}(\R,\R)$, with $\beta_{ij}$ non-negative, such that \ref{H1} holds with $\alpha^r_{ij}=\alpha_{ij}$ and $\beta^r_{ij}=\beta_{ij}$ for every $r>0$ and every $i,j=1,\dots,N$.
\end{enumerate}
It is immediate to check that an analogous version of Lemma \ref{lem:two_nodes_assumptions_from_points_to_functions} holds true also for \ref{H1S}.

If \ref{H1S} is in force, then the assumption \ref{H2} can be weakened in the sense that boundedness for the solution $\sigma$ in \ref{H2} becomes dispensable.
\begin{enumerate}[label=\upshape(\textbf{H2*}),leftmargin=33pt,itemsep=2pt]
\item\label{H2S} 
Assume that \ref{H2} holds, except the solution $\sigma(t)=(\sigma_1(t),\dots,\sigma_N(t))^\top\in\U_t$~\eqref{eq1}, is not necessarily bounded.
\end{enumerate}

\begin{corollary}
    Assume that \ref{H1S}, \ref{H2S} and \ref{H3}  hold. Then with the respective additional assumptions, the results of {\rm Theorem \ref{thm:sync_up_to_constant}}, and {\rm Corollaries \ref{cor:global_coupling}} and  {\rm \ref{cor:sharp_sync}} still hold true.
\end{corollary}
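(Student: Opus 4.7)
The plan is to revisit the proof of Theorem~\ref{thm:sync_up_to_constant} and to identify precisely where the boundedness assumption on the reference solution $\sigma$ in~\ref{H2} and the local nature of the one-sided inequality~\ref{H1} enter, and then show that strengthening~\ref{H1} to the global version~\ref{H1S} makes the boundedness assumption dispensable and hence allows~\ref{H2S} in place of~\ref{H2}.

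First I would observe that the only role of the radius $\rho$ in the proof of Theorem~\ref{thm:sync_up_to_constant}, and in Lemma~\ref{lem:error_ij_diff_inequality} on which it rests, is to select a ball $B_\rho$ containing the range of $\sigma$ so that one may invoke~\ref{H1} with the constants $\alpha^\rho_{ij}, \beta^\rho_{ij}$. Under~\ref{H1S} the required one-sided inequality holds with functions $\alpha_{ij}, \beta_{ij}$ independent of the radius, and the announced analogue of Lemma~\ref{lem:two_nodes_assumptions_from_points_to_functions} delivers the pair-wise inequality for arbitrary continuous curves with no confinement to any fixed ball. Consequently the differential inequality of Lemma~\ref{lem:error_ij_diff_inequality} holds verbatim with $\alpha_{ij}, \beta_{ij}$ replacing $\alpha^\rho_{ij}, \beta^\rho_{ij}$, and the construction of the matrix $E(t)$, the row-dominance argument, and the exponential dichotomy of $\dot u = E(t)u$ all go through unchanged.

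The only substantive adjustment concerns the estimate bounding $\|U(t,t_0)\|\,|\xi(t_0)|$ by $4\rho^2 e^{-\overline\gamma(t-t_0)}$, which used the uniform bound $\sup_i\|\sigma_i\|_\infty < \rho$ from~\ref{H2}. Under~\ref{H2S} the reference solution $\sigma$ is still defined and locally absolutely continuous on all of $\R$, so $|\xi(t_0)|$ is a finite quantity depending on the initial time $t_0$; the exponential decay $\|U(t,t_0)\| \le e^{-\overline\gamma(t-t_0)}$ then gives $\|U(t,t_0)\|\,|\xi(t_0)| \to 0$ at rate $\overline\gamma$. For any $\ep>0$ one thus takes $T(\ep)=\overline\gamma^{-1}\ln(|\xi(t_0)|/\ep)$ in place of the original $\overline\gamma^{-1}\ln(4\rho^2/\ep)$; the variation-of-constants integral is controlled exactly as before via~\ref{H3}, so one obtains the same bound $|\sigma_i(t)-\sigma_j(t)|^2 < \ep + M$ for $t-t_0 > T(\ep)$, with $T$ now depending on the initial data through $|\xi(t_0)|$ rather than through a uniform radius.

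The adaptations needed for Corollary~\ref{cor:global_coupling} and Corollary~\ref{cor:sharp_sync} involve no new ideas: the time-rescaling argument for global coupling operates at the level of the linear differential inequality, which is unchanged under~\ref{H1S}, while the $L^\infty$-control on $\beta$ improves the integral estimate independently of any $\rho$. I expect the only subtle point to be checking that the~\ref{H1S}-analogue of Lemma~\ref{lem:two_nodes_assumptions_from_points_to_functions} really delivers the desired inequality over all of $\R^M$ rather than on bounded balls, but this is routine since the density argument in that lemma is insensitive to the codomain and works identically once the pointwise inequality is radius-free.
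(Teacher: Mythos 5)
Your proposal is correct and follows essentially the same route as the paper: observe that the boundedness of $\sigma$ in \ref{H2} only entered through fixing $\rho$ for \ref{H1} in Lemma~\ref{lem:error_ij_diff_inequality}, so under \ref{H1S} the differential inequality and the dichotomy argument go through for an unbounded $\sigma$. Your extra step of replacing the $4\rho^2$ bound on $|\xi(t_0)|$ by the finite quantity $|\xi(t_0)|$ itself (so that $T(\ep)$ depends on the initial error) is a correct and slightly more explicit handling of a point the paper's one-line proof leaves implicit.
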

\begin{proof}
    The results are immediate once one notices that the boundedness of $\sigma(t)$ in \ref{H2} was only needed to fix $\rho>0$ so that \ref{H1} could be used in Lemma \ref{lem:error_ij_diff_inequality} for the suitable $\alpha^\rho_{ij},\beta^\rho_{ij}\in L^1_{loc}(\R,\R)$. The same argument of  Lemma \ref{lem:error_ij_diff_inequality} can now be repeated to construct a differential inequality holding for any (possibly not bounded) solution of \eqref{eq1}. 
\end{proof}

To highlight some the results of this section, let us present a couple of examples.

\begin{example}[Synchronization of heterogeneous van der Pol oscillators with time-dependent perturbation parameter]\label{ex:vdp}

Let us consider a network of $N\geq2$ heterogeneous van der Pol oscillators. Each oscillator has internal dynamics given by
\begin{equation}
\begin{split}
   u_i'&= v_i+b_iu_i-\displaystyle\frac{u_i^3}{3}\\
    v_i'&=-\ve_i(t)u_i.  
\end{split}
\end{equation}

Notice that since the individual oscillators are heterogeneous, both in amplitude and phase, we expect accordingly that $\alpha_{ij}>0$ and $\beta_{ij}>0$ in \ref{H1}. Nevertheless, in this example, we shall show that a strong enough coupling allows us to synchronize the oscillators. This observation goes in hand with \ref{H2} since for a weak enough coupling we indeed expect the existence of a large enough inflowing invariant ball for the coupled dynamics. Finally, since the individual nodes are all oscillators, they differ on bounded quantities, hence \ref{H3} holds. 

For this example, we randomly choose the parameters $b_i\in(\frac{1}{2},1)$ from a uniform distribution, which influence the amplitude of each oscillator. Furthermore, we let $\ve_i(t)=\ve_0\left(1+\frac{1}{2}\sin(\omega_it) \right)$, with $\omega_i>0$ also picked uniformly at random in the interval $\omega_i\in(1,2)$, and $0<\ve_0\ll1$. Naturally, we can choose any other time-varying behavior of $\ve(t)$ as long as it is positive but sufficiently small, i.e., $0<\ve(t)\ll1$ for almost all $t\geq t_0$. Regarding the network, we choose a piece-wise constant adjacency matrix $A(t)=[a_{ij}(t)]$ where $a_{ij}\in\left\{ 0,1\right\}$ updates randomly every $\Delta t$ units of time (we do ensure that the underlying graph is always connected). Finally, we also assume that the interconnection occurs on the fast timescales, meaning that the model we consider reads as
\begin{equation}\label{eq:vdp}
    \begin{split}
         u_i'&= v_i+b_iu_i-\displaystyle\frac{u_i^3}{3}+\frac{c}{N}\sum_{j=1}^Na_{ij}(t)(u_j-u_i)\\
    v_i'&=-\ve_i(t)u_i +\frac{c}{N}\sum_{j=1}^Na_{ij}(t)(v_j-v_i).
    \end{split}
\end{equation}

We notice that since the weights $a_{ij}$ are nonnegative and the dynamics of the decoupled nodes are ultimately bounded, one can guarantee that Theorem~\ref{thm:sync_up_to_constant}, and especially Corollary~\ref{cor:global_coupling}, hold for $c$ large enough. In Figure~\ref{fig:vdp1} we show a corresponding simulation for $N=5$ and $\Delta t=50$, while in Figure~\ref{fig:vdp2} we show the results for a similar setup, but with $N=100$ and $\Delta t=5$ (for practicality we only show the error dynamics for the latter one). In both cases we compare the effect of the global coupling $c$, as described in Corollary~\ref{cor:global_coupling} and verify that the synchronization error decreases as $c$ increases. The shown pictures are representative among ten different simulations, all of which show a similar qualitative behavior.
\begin{figure}[htbp]
    \centering
    \includegraphics{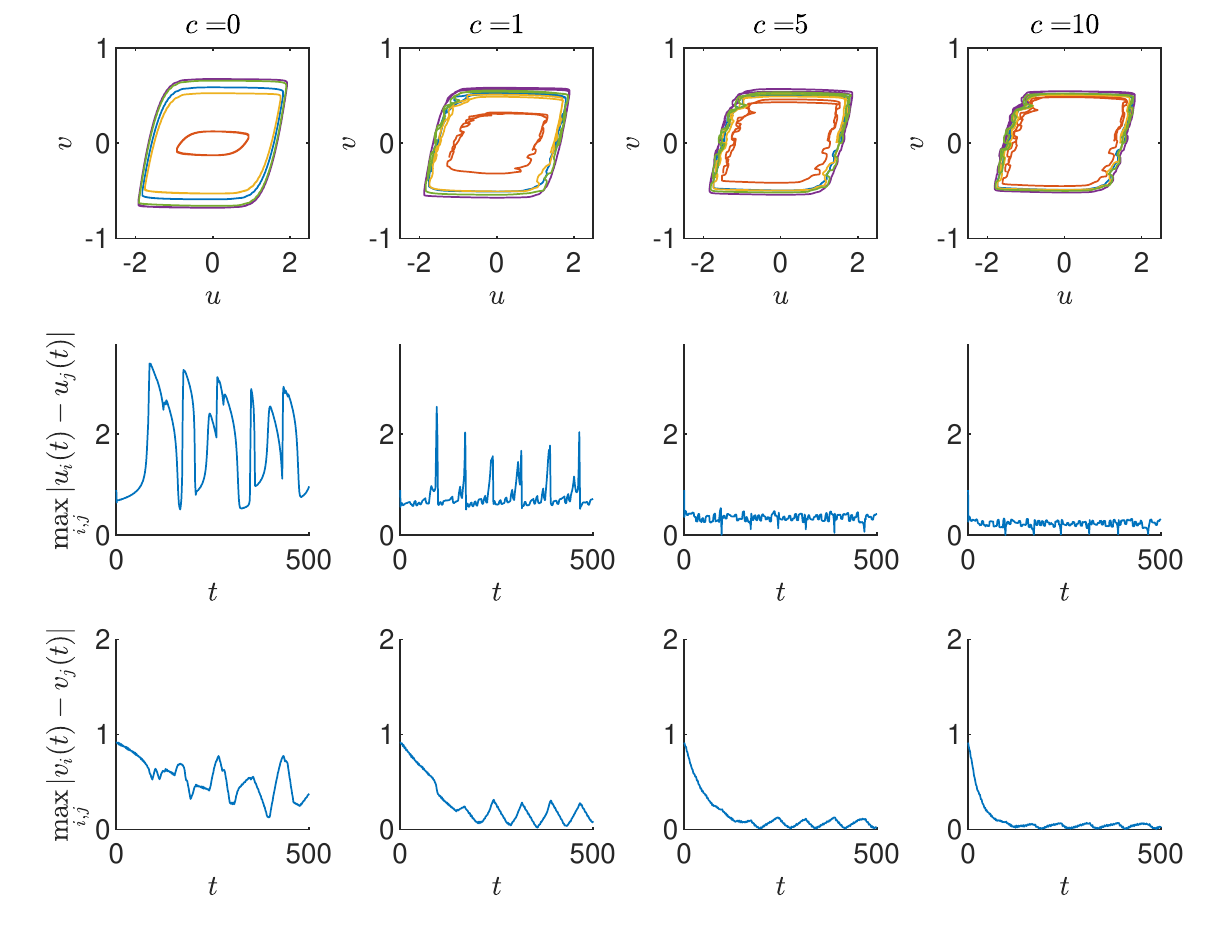}
    \caption{A temporal network of $N=5$ heterogeneous van der Pol oscillators, and update time $\Delta t=50$, as described above. Each column corresponds to a particular value of $c$ in~\eqref{eq:vdp}. In the first row, we plot the projection to the $(u,v)$-plane of the solutions. Since~\eqref{eq:vdp} is nonautonomous, the apparent intersections are just due to the projection. The second and third rows show the maximum of pairwise errors for each component. According to Corollary~\ref{cor:global_coupling}, there is a large enough $c$ leading to synchronization. This is verified as one goes from left to right in the plots. We notice that the attractor, for example in the right most column, does not appear regular due to the time-dependent random switching of the network topology.}
    \label{fig:vdp1}
\end{figure}
\begin{figure}[htbp]
    \centering
    \includegraphics{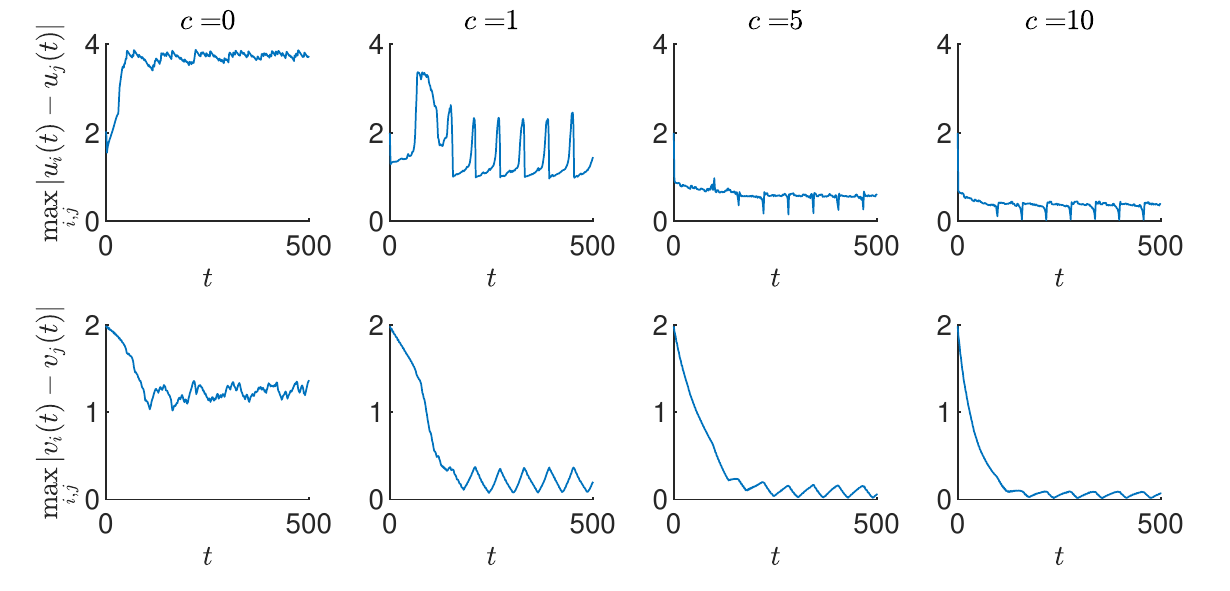}
    \caption{Error plots analogous to those in Figure~\ref{fig:vdp1}, corresponding to a simulation with $N=100$ and $\Delta t= 5$. Observe that, again, as $c$ increases the synchronization error decreases, as guaranteed by Corollary~\ref{cor:global_coupling}.}
    \label{fig:vdp2}
\end{figure}
\end{example}

In the next example, we exploit Theorem~\ref{thm:sync_up_to_constant} to achieve synchronization in the case when some weights in the network can be negative. We also briefly argue how our synchronization results on a static network may persist under small enough time-varying perturbations of the weights. The latter case is further detailed in section~\ref{sec:time_dependent_perturbations}.

\begin{example}[Compensating contrarians for consensus on a ring network] 

Consensus dynamics is very important in several fields of science~\cite{becchetti2020consensus,spanos2005dynamic,tahbaz2006consensus}. Let us consider a ring network with $N$ nodes as sketched in Figure~\ref{fig:ring}.
\begin{figure}[htbp]
    \centering
    \begin{tikzpicture}
    \graph[counterclockwise, radius=2.5cm, n=10]
 {1[as=$x_1$],2[as=$x_2$],3[as=$x_3$],a[as=$x_{i-2}$],i[as=$x_{i-1}$],j[as=$x_i$],k[as=$x_{i+1}$],b[as=$x_{i+2}$],M/[as=$x_{N-1}$],N/$x_N$;
    };
    \draw[->,red] (1) to[bend left=10](N);
    \draw[->,red] (1) to[bend right](M);
    \draw[->,red] (1) to[bend right=10](2);
    \draw[->,red] (1) to[bend left](3);
    \draw[->] (2) to[bend right=15](3);
    \draw[->] (3) to[bend right=15](2);
    \draw[->] (N) to[bend right=15](M);
    \draw[->] (M) to[bend right=15](N);
    \draw[dotted] (3) to[](a);
    \draw[dotted] (b) to[](M);
    \draw[->] (i) to[bend right=15](j);
    \draw[->] (j) to[bend right=15](i);
    \draw[->] (j) to[bend right=15](k);
    \draw[->] (k) to[bend right=15](j);
    \draw[->] (k) to[bend right=15](b);
    \draw[->] (b) to[bend right=15](k);
    \draw[->] (a) to[bend right=15](i);
    \draw[->] (i) to[bend right=15](a);

    \draw[->] (a) to[bend right=75](j);
    \draw[->] (j) to[bend right=25](a);

    \draw[->] (b) to[bend right=25](j);
    \draw[->] (j) to[bend right=75](b);

    \draw[->] (2) to[bend right=25](N);
    \draw[->] (N) to[bend right=75](2);
    \end{tikzpicture}
    \caption{A ring network where, \emph{except for the contrarian node $x_1$}, all nodes interact in a bidirectional way with its nearest $2$-neighbors. The contrarian node $x_1$ has a negative influence on its $2$-neighbors, while the neighbors do not influence $x_1$. In this example we assume that all the conformist weights (black arrows) are $1$, while the contrarian weights (red arrows) are $-a$, $a>0$. In this setup, \emph{consensus in not achieved}. We use the results of Theorem~\ref{thm:sync_up_to_constant} to determine what influence $x_2$ must have on $x_1$ to reach consensus.}
    \label{fig:ring}
\end{figure}
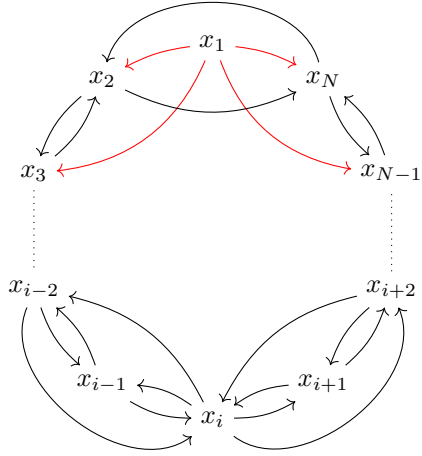
In this example, each node is scalar $x_i\in\R$ and interacts with its nearest $2$-neighbors. We assume that the network dynamics are governed by the well-known consensus protocol $\dot x=-Lx$, where $L$ denotes the (signed) Laplacian. Component-wise, the dynamics of each node is determined by $\dot x_i=\sum_{j=i-2}^{i+2}a_{ij}(x_j-x_i)$ with $j\in\mathbb Z\text{ mod }N$, $a_{ii}=0$, and where $a_{ij}\neq0$ denotes a connection from node $j$ towards node $i$. From now on, we stick to the previously described $2$-nearest neighbor topology.

Motivated by~\cite{PhysRevE.84.046202}, we identify $a_{ij}>0$ with a ``conformist'' influence, while $a_{ij}<0$ is referred to as a ``contrarian'' influence. Let us consider that there is one contrarian node. Without loss of generality, let node $x_1$ be the contrarian represented as $a_{i1}<0$ for $i=1,2,N-1,N$. Moreover, we assume for now that neighboring nodes to $x_1$ do not influence it, that is $a_{1j}=0$ for $j=1,2,N-1,N$, and that the rest of the nodes are conformists, i.e., the remaining nonzero weights $a_{ij}$, are positive. We recall that in the case where all the $a_{ij}$'s are positive, the dynamics of the consensus protocol lead to convergence of the solutions to some finite value. In the case of a signed Laplacian, where negative weights are allowed, determining the stability of the protocol is considerably harder. In particular, in the above setting consensus does not hold. In this example we are going to use Theorem~\ref{thm:sync_up_to_constant} to determine how strongly would node $x_2$ need to influence node $x_1$ (that is choose $a_{12}$) to overcome $x_1$'s negative influence so that the protocol can reach consensus. 

For simplicity, let $a_{i1}=-a$, $a>0$, and $a_{ij}=1$ for $i,j\geq2$. With the notation of Theorem~\ref{thm:sync_up_to_constant}, we have that $\alpha_{ij}=0$ and $\mu_1=0$. Moreover, notice that for all $i,j\in\left\{4, \ldots, N-2 \right\}$ one has $\delta_{ij}<0$. Accounting for the symmetry $\delta_{ij}=\delta_{ji}$ we have
\begin{equation}
    \begin{split}
        \delta_{12} &=-\left( a_{12}+\frac{3}{2}-a \right)\\
        \delta_{13}=\delta_{1N}=\delta_{1(N-1)}&=-\left(\frac{a_{12}}{2}+\frac{3}{2}-a\right)\\
        \delta_{23}=\delta_{2N}&=-\left(\frac{5}{2}-a \right).
    \end{split}
\end{equation}
Since $a_{12}$ has no influence on $\delta_{23}$, we impose the further constraint that $a<\frac{5}{2}$. On the other hand, regarding $\gamma$ let
\begin{equation}
    \gamma_{ij}=2|\delta_{ij}|-\sum_{k\neq i,j}|a_{jk}-a_{ik}|.
\end{equation}
So, accounting for the symmetry mentioned above, we look to satisfy the inequalities
\begin{equation}
    \begin{split}
        \gamma_{12}
        &=2\left|  a_{12}+\frac{3}{2}-a\right|-3>0\\
        \gamma_{13}
        &=2\left| \frac{a_{12}}{2}+\frac{3}{2}-a \right|-|1-a_{12}|-2>0\\
        \gamma_{23}
         &=2\left|\frac{5}{2}-a \right|-2>0,
    \end{split}
\end{equation}
where, as for the $\delta_{ij}$'s above, the rest of the $\gamma_{ij}$'s are all nonnegative. We notice that $\gamma_{12}>0$ can always be satisfied by an appropriate choice of $a_{12}$ and that since $a_{12}$ has no influence on $\gamma_{23}$, we have the further restriction $a<\frac{3}{2}$. However $\gamma_{13}>0$ further imposes that $a<1$. With this restriction, $\delta_{ij}<0$ for any $a_{12}>0$. So, from Theorem~\ref{thm:sync_up_to_constant}, we can conclude that for $0<a<1$ the given consensus protocol achieves consensus provided that the contrarian influence is compensated by $a_{12}>a$. This last inequality is obtained from the requirement $\gamma_{12}>0$. 

It is evident from the analysis performed in this example, that the weights $a_{ij}$ can be time-varying. For more general details see section~\ref{sec:time_dependent_perturbations}. Moreover, the contrarian influence does not have to be homogeneous, the result above will hold as long as all of them have modulus less than $1$. In Figure~\ref{fig:contrarians} we show a couple of simulations that verify our arguments with the conformist weights fixed to $1$, the contrarian weights given by $a_{i1}=a_{i1}(t)=-\frac{1}{2}+\frac{1}{2}\sin(\omega_it)$ with uniformly distributed random frequencies, and the conformist compensation $a_{12}=0$, on the left, and $a_{12}=1$ on the right.


\begin{figure}[htbp]
    \centering
   \includegraphics[width=0.403\textwidth]{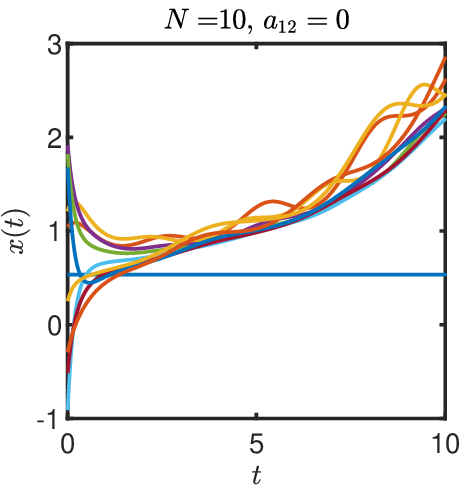}\hspace{0.5cm}
    \includegraphics[width=0.403\textwidth]{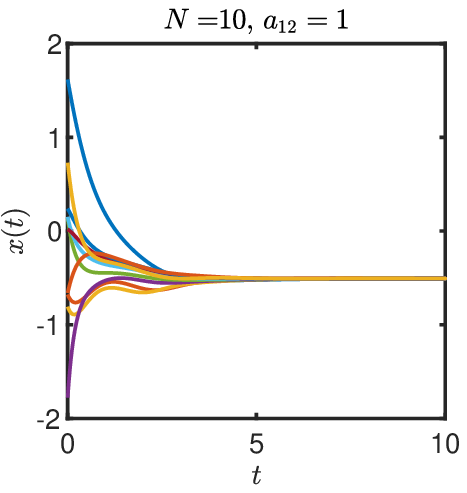}\\    \hspace{-0.5cm}\includegraphics[width=0.415\textwidth]{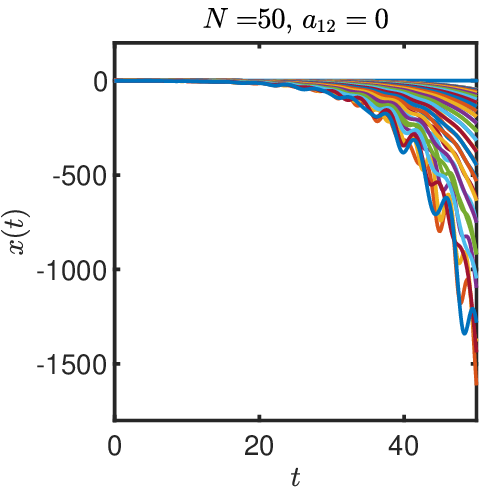}\hspace{0.85cm}
    \includegraphics[clip,trim={0cm -0.1cm 0cm 0cm},width=0.405\textwidth]{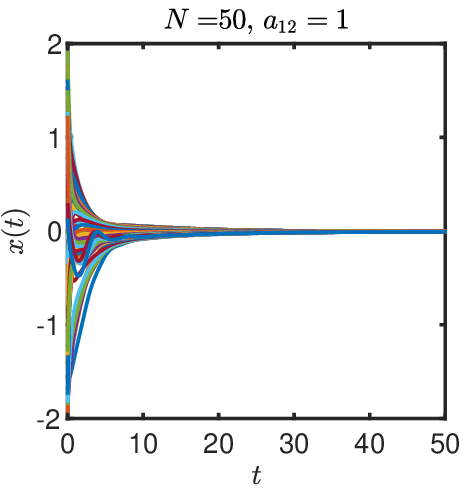}
    \caption{Simulations of the consensus protocol $\dot x=-Lx$, for a ring network as in Figure~\ref{fig:ring}. For all simulations, the initial conditions are randomly chosen, the conformist weights are set to $1$, and the contrarian weights are $a_{i1}=-\frac{1}{2}+\frac{1}{2}\sin(\omega_it)$, $i=2,3,N-1,N$, for some randomly chosen frequency $\omega_i>0$. In the left we show the dynamics where no compensation is implemented, i.e.~$a_{12}=0$. One can clearly notice that not only the overall dynamics is unstable (due to the negative weights), but there is no consensus at all. On the right we verify that from Theorem~\ref{thm:sync_up_to_constant} we have deduced that, for example, a weight $a_{12}=1$ leads to consensus.}
    \label{fig:contrarians}
\end{figure}

\end{example}

\section{On the synchronization of a cluster}\label{sec:cluster}
In practical cases, the synchronization of the whole network can be sometimes excessive and one is rather interested in the behaviour of a limited portion of the network. Hereby, we show how our arguments can be used to analyze also such case.

\begin{theorem}\label{thm:sync_cluster}
Let $2<n\le N$ integer and consider an ordered set $J$ of $n$ nodes from~\eqref{eq1} identified by their indices, i.e.~$J=\{j_1,\dots,j_n\}\subset\{1,\dots,N\}$, with $j_m<j_{m+1}$ for all $m=1,\dots,n-1$. Assume that ~\ref{H2} holds for~\eqref{eq1}, and that~\ref{H1} and~\ref{H3} are true for all $i,j\in J$. 

Furthermore, assume that $\{a_{jk}(\cdot+t)-a_{ik}(\cdot+t)\in L^1_{loc}\mid t\in\R,\, i,j\in J, k\notin J\}$ is $L^1_{loc}$-bounded, and call 
\begin{equation}
0\le\mu_2:=2\rho^2\max_{\substack{i,j\in J,\\ k\notin J}}\sup_{\tau\in\R}\int_\tau^{\tau+1}|a_{jk}(s)-a_{ik}(s)|\, ds<\infty.
\end{equation}
Fixed any $M>\mu_1+\mu_2(N-n)\sqrt{2n(n-1)}$, if there is $t_0\in\R$ such that for all $i,j\in J$ with $i<j$ and almost every $t>t_0$,
\begin{equation}
\begin{split}
\delta_{ij}(t):= \alpha^\rho_{ij}(t)-\Big(a_{ij}(t)+a_{ji}(t)
    +\frac{1}{2}\sum_{\substack{k=1\\k\neq i,j}}^N\big(a_{jk}(t)+a_{ik}(t)\big)\Big)<0\\
\end{split}    
\end{equation}
and 
\begin{equation}
\begin{split}
\overline \gamma_J:=\inf_{t\in\R}\min_{\substack{i,j=1,\dots,N,\\ i\neq j}}\Big\{2|\delta_{ij}(t)|&-\sum_{k\in J\setminus \{i,j\}}\big|a_{jk}(t)-a_{ik}(t)\big|\Big\}\\
&>-\log\left(1-\frac{\mu_1+\mu_2(N-n)\sqrt{2n(n-1)}}{M}\right),
\end{split}
\end{equation}
then for every $\ep>0$ there is $T(J,\ep)=\frac{1}{\overline \gamma_J}\ln\left(4\rho^2/\ep\right)>0$ such that 
\begin{equation}
|\sigma_i(t)-\sigma_j(t)|^2< \ep+M,\qquad\text{for } t-t_0>T(J,\ep).
\end{equation}
In other words, $J$ synchronizes up to a constant in finite time.
\end{theorem}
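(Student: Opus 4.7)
The plan is to mimic the proof of Theorem~\ref{thm:sync_up_to_constant}, but restricted to the index set $J$, treating the influence of the nodes $k\notin J$ on the cluster errors as an additional (bounded) inhomogeneous forcing rather than as coupling. Concretely, I would introduce the cluster error vector $\xi_J(t)=(\xi_{ij}(t))_{i,j\in J,\,i<j}\in\R^{n(n-1)/2}$ and, for each pair $i,j\in J$ with $i<j$, apply Lemma~\ref{lem:error_ij_diff_inequality} after splitting $\sum_{k\neq i,j}$ into the part indexed by $k\in J\setminus\{i,j\}$ and the part indexed by $k\notin J$. The first part produces, through the same bounding procedure (replacing the signed coefficients $a_{jk}-a_{ik}$ by $\eta(a_{jk}-a_{ik})$ as in~\eqref{eq:entries_E}), a reduced time-dependent matrix $E_J(t)$ acting on $\xi_J$; the second part is absorbed into an external forcing vector $F(t)\in\R^{n(n-1)/2}$.

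The matrix $E_J(t)$ inherits, verbatim from~\eqref{eq:entries_E}, negative diagonal entries $2\delta_{ij}(t)$ and nonnegative off-diagonal entries, so the row-dominance inequality for $E_J$ is exactly the definition of $\overline\gamma_J$. Hence, as in the proof of Theorem~\ref{thm:sync_up_to_constant}, the principal matrix solution $U_J(t,s)$ of $\dot u=E_J(t)u$ satisfies $\|U_J(t,s)\|\le e^{-\overline\gamma_J(t-s)}$ for $t\ge s\ge t_0$. For the external forcing, the boundedness of the $\sigma_k$ in $B_\rho$ yields $\bigl||\sigma_i(t)-\sigma_k(t)|^2-|\sigma_j(t)-\sigma_k(t)|^2\bigr|\le 4\rho^2$ for $k\notin J$, so each component of $F$ is pointwise dominated by $4\rho^2\sum_{k\notin J}|a_{jk}(t)-a_{ik}(t)|$. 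Hence, using that the Euclidean norm of a vector in $\R^{n(n-1)/2}$ is bounded by $\sqrt{n(n-1)/2}$ times its maximum component, one obtains
\begin{equation}
\sup_{\tau\in\R}\int_\tau^{\tau+1}|F(s)|\,ds\le \sqrt{2n(n-1)}\,(N-n)\,\mu_2,
\end{equation}
where the factor $\sqrt{2n(n-1)}=2\sqrt{n(n-1)/2}$ packages the vector-norm estimate together with the factor $2$ introduced when passing from $\tfrac{1}{2}\tfrac{d}{dt}\xi_{ij}$ to $\dot\xi_{ij}$ (as in the definition of $\beta$ in the proof of Theorem~\ref{thm:sync_up_to_constant}).

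The rest of the argument then copies the end of the proof of Theorem~\ref{thm:sync_up_to_constant}: the componentwise comparison of $\dot\xi_J$ with the solution of $\dot u=E_J(t)u+\beta_J(t)+F(t)$, $u(t_0)=\xi_J(t_0)$, via the Olech--Opial theorem extended to the cluster indices, together with variation of constants and the geometric-series estimate in~\eqref{eq:VOC_nonhomogeneous_integral_estimate} applied separately to the forcings $\beta_J$ and $F$, yields
\begin{equation}
|\xi_J(t)|\le 4\rho^2 e^{-\overline\gamma_J(t-t_0)}+\frac{\mu_1+\sqrt{2n(n-1)}(N-n)\mu_2}{1-e^{-\overline\gamma_J}}.
\end{equation}
The hypothesis $\overline\gamma_J>-\log\bigl(1-(\mu_1+\sqrt{2n(n-1)}(N-n)\mu_2)/M\bigr)$ makes the second summand strictly smaller than $M$, while choosing $t-t_0>T(J,\ep)=\tfrac{1}{\overline\gamma_J}\ln(4\rho^2/\ep)$ makes the first smaller than any prescribed $\ep>0$, giving the claim.

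I expect the main obstacle to be the bookkeeping of constants in the external forcing: verifying that $\sqrt{2n(n-1)}$ is indeed the correct prefactor (which combines a vector-norm estimate with the extra factor $2$ appearing when doubling the inequality of Lemma~\ref{lem:error_ij_diff_inequality}), and confirming that deleting the columns of $E(t)$ associated to $k\notin J$ preserves row-dominance with exactly the constant $\overline\gamma_J$ appearing in the hypothesis. The rest is a direct transcription of the proof of Theorem~\ref{thm:sync_up_to_constant}.
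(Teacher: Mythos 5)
Your proposal is correct and follows essentially the same route as the paper's proof: the paper also restricts Lemma~\ref{lem:error_ij_diff_inequality} to pairs in $J$, builds the reduced row-dominant matrix (there called $C(t)$, identical to your $E_J(t)$) with rate $\overline\gamma_J$, and absorbs the $k\notin J$ terms into the inhomogeneity (there written as $\nu_J(t)$ with the cutoff $\eta$ applied, which is only a cosmetic difference from your separate forcing $F(t)$). Your constant bookkeeping, including the bound $|\xi_{ik}-\xi_{jk}|\le 4\rho^2$ and the prefactor $\sqrt{2n(n-1)}=2\sqrt{n(n-1)/2}$ leading to $\mu_2(N-n)\sqrt{2n(n-1)}$, matches the paper's estimate exactly.
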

\begin{proof}
For the sake of simplicity, we shall assume that $J=\{1,\dots,n\}$. Then, we can proceed akin to the proof of Theorem~\ref{thm:sync_up_to_constant} but now considering the vector $\zeta(t):=\big(|\sigma_i(t)-\sigma_j(t)|^2\big)_{i,j\in J,\,i<j}^\top$. 
Thanks to Lemma~\ref{lem:error_ij_diff_inequality}, we have that $\dot\zeta(t)$ is an \emph{under-function} with respect to the initial value problem $u'= C(t)u+\nu_J(t)$, $u(t_0)=\zeta(t_0)$, i.e.,
\begin{equation}
\dot \zeta(t)\le C(t)\zeta(t)+\nu_J(t),\quad\text{for all }t>t_0,\,t_0\in\R,
\end{equation}
where $\nu_J(t)=(\nu_{ij}(t))_{i,j\in J,\,i<j}^\top$ is defined by
\begin{equation}\label{eq:nu_cluster}
\nu_{ij}(t)=\eta\left(2\beta^\rho_{ij}(t)+\sum_{k\notin J}\big(a_{jk}(t)-a_{ik}(t)\big)\,\big(\xi_{ik}(t)-\xi_{jk}(t)\big)\right),
\end{equation}
and $C(t)$ is the time-dependent matrix constructed exactly as the matrix $E(t)$ in the proof of Theorem~\ref{thm:sync_up_to_constant} where now $n$ appears in place of $N$ everywhere. Now, note that, under the given assumptions, $\dot u=C(t)u$ admits an exponential dichotomy on $[t_0,\infty)$ with projector the identity and exponential rate of convergence $\gamma_J>0$; as for Theorem~\ref{thm:sync_up_to_constant}, the linear homogeneous system $\dot u=C(t)u$ is row-dominant for $t\ge t_0$~\cite[Theorem 7.16)]{fink1974almost}. Then, with analogous reasoning, we obtain that for all $t\ge t_0$,
\begin{equation}
\begin{split}
\big(|\sigma_i(t)-&\sigma_j(t)|^2\big)_{i,j=1,\dots,n,\,i<j}^\top\\
\le &U(t,t_0)\big(|\sigma_i(t_0)-\sigma_j(t_0)|^2\big)_{i,j=1,\dots,n,\,i<j}^\top+\int_{t_0}^tU(t,s)\nu(s)\,ds,
\end{split}
\end{equation}
where $U(t,t_0)$ is the principal matrix solution at $t_0$ of $\dot u=C(t)u$. Then, fixed $\ep>0$ and reasoning as in the proof of Theorem~\ref{thm:sync_up_to_constant}, but recalling that $\nu$ is defined by~\eqref{eq:nu_cluster}, we come to the analogous chain of inequalities as~\eqref{eq:VOC_nonhomogeneous_integral_estimate}, that is,
\begin{equation}
\begin{split}
\int_{t_0}^t&\|U(t,s)\|\, \Big(|\beta(s)|+\sqrt{\frac{n(n-1)}{2}}\max_{i,j\in J}\sum_{k\notin J}|a_{jk}(s)-a_{ik}(s)|\,|\xi_{ik}(t)-\xi_{jk}(t)|\Big)\,ds\\
&\le \int_{0}^{t-t_0} \!\!\Big(|\beta(t-u)|+4\rho^2\sqrt{\frac{n(n-1)}{2}}\max_{i,j\in J}\sum_{k\notin J}|a_{jk}(t-u)-a_{ik}(t-u)|\Big)e^{-\overline\gamma_J u}\,du\\
&\le  \sum_{n=0}^\infty e^{-\overline\gamma_J n} \big(\mu_1+\mu_2(N-n)\sqrt{2n(n-1)}\big)=\frac{\mu_1 +\mu_2(N-n)\sqrt{2n(n-1)}}{1-e^{-\overline\gamma_J}}.
\end{split}
\end{equation}
Therefore, one has for every $i,j=1,\dots, n$,
\begin{equation}
|\sigma_i(t)-\sigma_j(t)|^2\le\ep+ \frac{\mu_1 +\mu_2(N-n)\sqrt{2n(n-1)}}{1-e^{-\overline\gamma_J}}<\ep+ M,
\end{equation}
whenever $t-t_0>T(J,\ep)=\frac{1}{\overline \gamma_J}\ln\left(4\rho^2/\ep\right)>0$, which concludes the proof.
\end{proof}

Also in the case of cluster synchronization, a sharper result can be obtained by considering a stronger assumption than the uniform $L^1_{loc}$-boundedness in the statement of Theorem~\ref{thm:sync_cluster}, i.e.~boundedness in 
$L^\infty$. 

\begin{corollary}
Under the assumptions of {\rm Theorem~\ref{thm:sync_cluster}}, if additionally $|\beta(\cdot)|\in L^\infty$, and also $a_{jk}(\cdot),a_{ik}(\cdot)\in L^\infty$ for all $i,j\in J$ and $k\notin J$, then, for every $\ep>0$ there is $T(\ep)=\frac{1}{\gamma_J}\ln\left(4\rho^2/\ep\right)>0$ such that for all $i,j=1,\dots,N$
\begin{equation}
|\sigma_i(t)-\sigma_j(t)|^2< \ep+\frac{1}{\overline \gamma_J}\Big(\|\beta(\cdot)\|_{L^\infty}+2\rho^2(N-n)\sqrt{2n(n-1)}\max_{\substack{i,j\in J,\\ k\notin J}}\|a_{jk}(\cdot)-a_{ik}(\cdot)\|_{L^\infty}\Big),
\end{equation}
for $t-t_0>T(\ep)$. 
\end{corollary}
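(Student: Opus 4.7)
The plan is to mirror the proof of Theorem~\ref{thm:sync_cluster} verbatim up to the point where the forcing integral appears, and then replace the unit-interval splitting argument with a direct $L^\infty$ estimate, in complete analogy with how Corollary~\ref{cor:sharp_sync} strengthens Theorem~\ref{thm:sync_up_to_constant}.

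Concretely, I would recall from the proof of Theorem~\ref{thm:sync_cluster} that $\zeta(t):=(|\sigma_i(t)-\sigma_j(t)|^2)_{i,j\in J,\,i<j}^\top$ satisfies the differential inequality $\dot\zeta(t)\le C(t)\zeta(t)+\nu_J(t)$, where $C(t)$ is the same row-dominant matrix as before and $\nu_J(t)$ is the inhomogeneity defined in~\eqref{eq:nu_cluster}. Since the homogeneous linear system $\dot u = C(t)u$ is unchanged, it still admits an exponential dichotomy on $[t_0,\infty)$ with projector the identity and rate $\overline\gamma_J$, so the bound $\|U(t,s)\|\le e^{-\overline\gamma_J(t-s)}$ and the comparison argument establishing $\zeta(t)\le U(t,t_0)\zeta(t_0)+\int_{t_0}^tU(t,s)\nu_J(s)\,ds$ carry over with no change.

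The new ingredient is the estimate of the forcing integral. Using \ref{H2} to bound $|\xi_{ik}(s)-\xi_{jk}(s)|\le 4\rho^2$ and then invoking the $L^\infty$ hypotheses on $|\beta|$ and on $|a_{jk}-a_{ik}|$ for $i,j\in J$, $k\notin J$, I would pull the constants out of the integral to obtain
\begin{equation*}
\int_{t_0}^t\|U(t,s)\|\,|\nu_J(s)|\,ds\le\Big(\|\beta(\cdot)\|_{L^\infty}+4\rho^2\sqrt{\tfrac{n(n-1)}{2}}(N-n)\!\!\max_{\substack{i,j\in J,\\k\notin J}}\!\|a_{jk}(\cdot)-a_{ik}(\cdot)\|_{L^\infty}\Big)\!\int_0^{t-t_0}\!\!e^{-\overline\gamma_J u}\,du,
\end{equation*}
which is bounded by $\frac{1}{\overline\gamma_J}\big(\|\beta\|_{L^\infty}+2\rho^2(N-n)\sqrt{2n(n-1)}\max\|a_{jk}-a_{ik}\|_{L^\infty}\big)$ since $4\sqrt{n(n-1)/2}=2\sqrt{2n(n-1)}$. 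Combined with the homogeneous-term estimate $\|U(t,t_0)\|\,|\zeta(t_0)|\le 4\rho^2 e^{-\overline\gamma_J(t-t_0)}<\ep$ for $t-t_0>T(\ep)$, exactly as in Theorem~\ref{thm:sync_cluster}, this yields the sharpened inequality in the statement.

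I do not anticipate a serious obstacle: the only subtle point is keeping track of the combinatorial constant arising from the number of cluster pairs $n(n-1)/2$ and the off-cluster node count $N-n$, and verifying that the simplification $4\sqrt{n(n-1)/2}=2\sqrt{2n(n-1)}$ produces exactly the coefficient claimed. Everything else is an unmodified reuse of the exponential-dichotomy bound and the comparison principle already established in the proof of Theorem~\ref{thm:sync_cluster}.
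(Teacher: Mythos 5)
Your proposal is correct and follows exactly the paper's route: the paper proves this corollary by reusing the comparison and dichotomy estimates from Theorem~\ref{thm:sync_cluster} and swapping the unit-interval splitting of the forcing integral for the direct $L^\infty$ bound, just as Corollary~\ref{cor:sharp_sync} does for Theorem~\ref{thm:sync_up_to_constant}. Your bookkeeping of the constant, including $4\rho^2\sqrt{n(n-1)/2}=2\rho^2\sqrt{2n(n-1)}$, matches the stated coefficient.
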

\begin{proof}
The result is an easy consequence of Theorem~\ref{thm:sync_cluster} reasoning as for Corollary~\ref{cor:sharp_sync}.
\end{proof}

The next example highlights how the up-to-a-constant synchronization achieved in finite time through Theorem \ref{thm:sync_cluster}, can be used to produce recurrent patterns of cluster synchronization alternated by intervals with no synchrony.

\begin{example}[Clustering in a FitzHugh-Nagumo network] To showcase Theorem~\ref{thm:sync_cluster}, let us consider a network of heterogeneous FitzHugh-Nagumo neurons. The $i$-th neuron has dynamics given by
\begin{equation}
    \begin{split}
        \dot x_i &=c_ix_i - x_i^3 - y_i + I_i\\
        \dot y_i &= \ve(x_i+a_i-b_iy_i),
    \end{split}\qquad\qquad i=1,\ldots,N.
\end{equation}
In this model $x_i$ represents the $i$-th membrane's voltage and $y_i$ the $i$-th recovery variable~\cite{rocsoreanu2012fitzhugh,rauch1978qualitative}. Regarding the parameters, we have that $c_i>0$ modulates the amplitude of oscillations, $I_i$ accounts for the stimulus current, and $\ve$ stands for the difference in timescales between the voltage and the recovery variables. 
The parameters $a_i>0$ and $b_i>0$, together with $I_i$, determine whether the neuron is in excitatory or in refractory mode. For this example we let the neurons be slightly heterogoeneous, and unless otherwise stated, we  assign uniformly at random parameter values according to the following: $c_i\in[0.75,1]$, $a_i\in[-0.3,\,0.3]$, $b\in[0.1,2]$, $I_i\in[0,.01]$ and $\ve=0.05$. We notice that with these parameters, isolated neurons may or may not oscillate. We moreover have similar arguments as in example \ref{ex:vdp} regarding \ref{H1}, \ref{H2}, and \ref{H3}.

We setup the network as follows: an underlying connected, unweighted, and directed graph of $N$ nodes is randomly generated by selecting an adjacency matrix $A=[a_{ij}]_{i,j=1,\ldots,N}$ with $a_{ij}\in\left\{0,1 \right\}$, from a (discrete) uniform distribution. Below we shall specify a time-varying change in the weights of this underlying network, but we emphasize that no new edges are created. Then, two nodes are chosen at random. Let $(x_l,y_l)$ and $(x_k,y_k)$ be such neurons; we set their parameters to $(c_l,I_l,a_l,b_l)=(0.5,0.1,0.3,1.4)$ and $(c_k,I_k,a_k,b_k)=(0.75,0.15,0.3,1.4)$, which ensures that, in isolation, the $l$ and $k$ neurons are oscillating. Furthermore, we call \emph{the neighbors of neuron $l$ (resp. $k$)} the nodes $i$ for which there is a directed edge $a_{il}$ from $l$ to $i$ (resp. $a_{ik}$ from $k$ to $i$). The set of neighbors of neuron $l$ (resp. $k$) is denoted by $\mathcal N_l$ (resp. $\mathcal N_k$).

\begin{figure}
    \centering
    \includegraphics[clip,trim={3cm 11cm 3cm 11cm}]{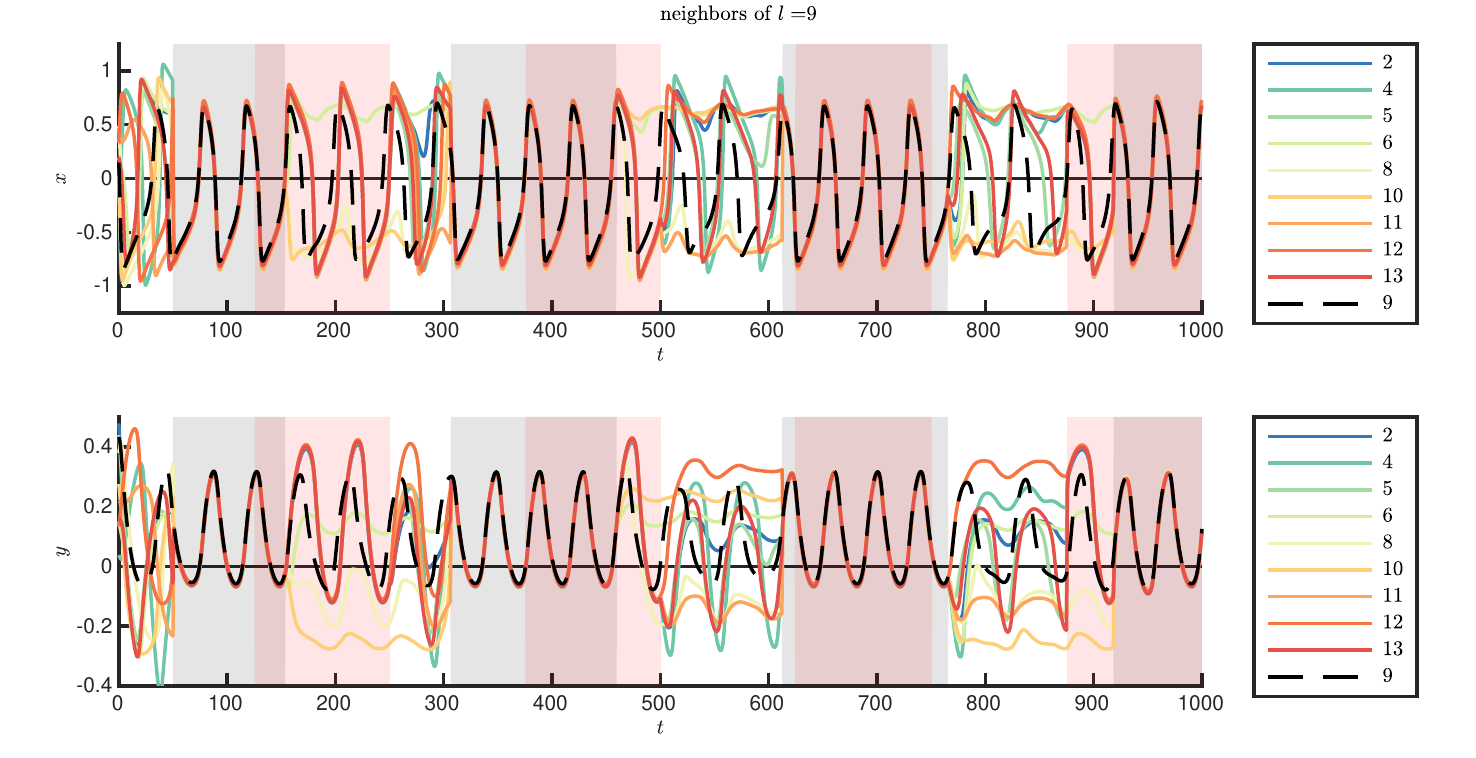}
    \includegraphics[clip,trim={3cm 11cm 3cm 11cm}]{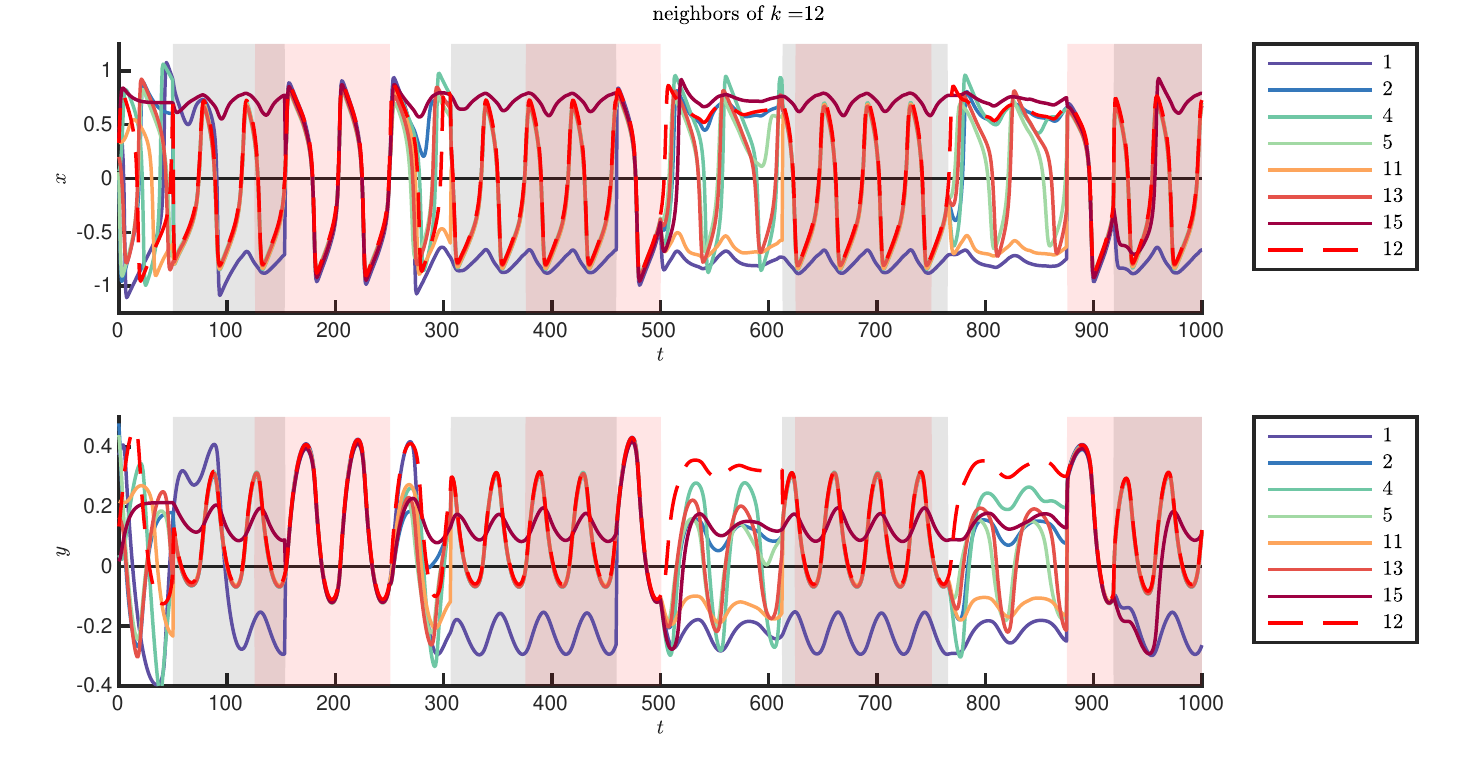}
    \caption{A representative simulation for $N=15$ nodes. As described in the main text, within the black/red shaded time intervals, neighboring neurons synchronize with the dashed black/red neuron. This is because during such time-frames, the conditions of Theorem \ref{thm:sync_cluster} hold. Indeed one can particularly observe that, since $k=12$ is a neighbor of $l=9$, the $k$-th neuron (dashed red) has larger amplitude according to its own parameters during the red time-frames, but synchronizes with the smaller amplitude oscillator $l=9$ (dashed black) during the black time-frames. We further notice that during the overlap of the time-frames, some trajectories also seem to synchronize. This, however, is not characterized in the example, and may very well depend on further connectivity properties. Nevertheless, notice that all neighbors of $k=12$, except for $1$ and $15$, are also neighbors of $l$. During the overlap of the time-frames we hence see a common cluster that does not include the aforementioned neighbors. 
    }
    \label{fig:fn1}
\end{figure}

Next, the (nonzero) weights of the network are set to $a_{ij}=\frac{1}{100}$\footnote{where $A=[a_{ij}]$ is the adjacency matrix of the previously randomly generated graph, and we identify a weighted edge from $j$ to $i$ with $a_{ij}$. If there is no edge from node $j$ to node $i$, then $a_{ij}=0\,\forall t$.}, and the weights of the outgoing edges of the $l$ and $k$ neurons are updated periodically as follows:
\begin{equation}
    a_{il}(t)=\begin{cases}
        \bar a, & \sin(\omega_lt)\geq0\\
        \frac{1}{100}, & \sin(\omega_lt)<0
    \end{cases},\qquad
    a_{ik}(t)=\begin{cases}
        \bar a, & \sin(\omega_kt)\geq0\\
        \frac{1}{100}, & \sin(\omega_kt)<0,
    \end{cases}
\end{equation}
for some positive frequencies $\omega_l$ and $\omega_k$. This example has been setup so that whenever $a_{il}=\bar a$ is sufficiently large and $a_{ik}=\frac{1}{100}$ (or viceversa), Theorem~\ref{thm:sync_cluster} holds for $J$ being the neuron $l$ together with its neighbors $\mathcal N_l$ (or viceversa).

\begin{figure}
    \centering
    \includegraphics[clip,trim={3cm 11cm 3cm 11cm}]{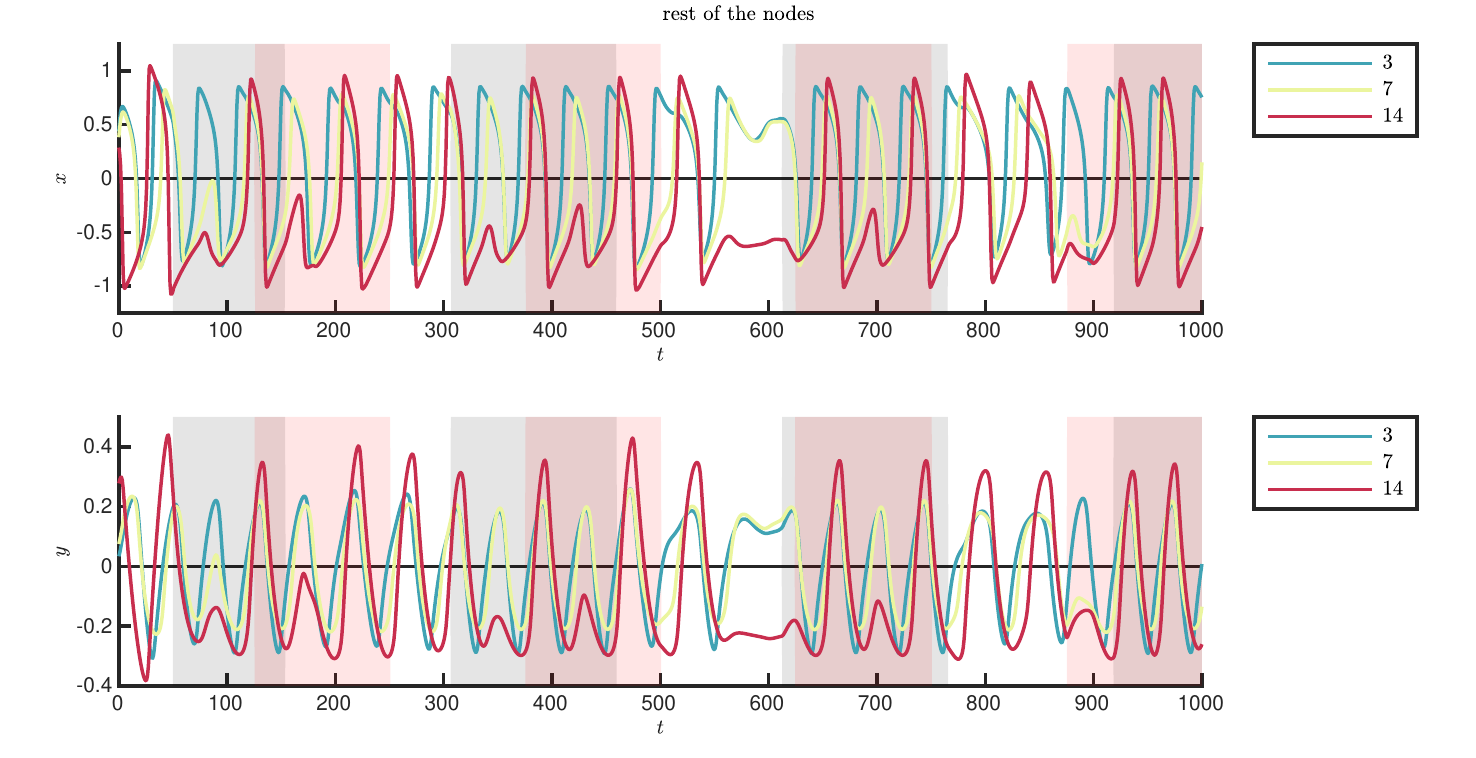}
    \caption{Neurons that are not neighbors of the $l$ nor the $k$ neurons.}
    \label{fig:fn2}
\end{figure}

In figure~\ref{fig:fn1} we present a representative simulation for $N=15$ and $\bar a =3$. The frequencies $\omega_l$ and $\omega_k$ have been chosen so that during the time interval with a black (resp. red) background, the cluster is formed by the $l$ (resp. $k$) neuron, shown as the black (resp. red) dashed curve, and its neighbors. So, indeed notice that along the ``black interval'' the $l$ neuron and its neighbors form a cluster while outside such intervals no synchronization seems to ensue (the same for the red intervals and the $k$ neuron). Outside the black and red intervals, the network is weakly connected with $a_{ij}=\frac{1}{100}$, except for the first $50$ time-units where, for comparison, the network is disconnected. The black and red intervals overlap differently because for this simulation we chose $\omega_l$ and $\omega_k$ incommensurate with each other. In figure~\ref{fig:fn2} we show the nodes that are not neighbors of either the $l$ or the $k$ neurons.
\end{example}

 \section{Persistence of synchronization} \label{sec:persistence}
 One of the interesting features of Theorems~\ref{thm:sync_up_to_constant} and~\ref{thm:sync_cluster} is the property of inherent robustness of the achieved synchronization against perturbation of both, the dynamics of the individual nodes, and of the adjacency matrix. The underlying reason is the roughness of the exponential dichotomy guaranteeing synchronization in Theorems \ref{thm:sync_up_to_constant} and \ref{thm:sync_cluster}. In this sense, the following result reminds in spirit the one in~\cite{pereira2013towards}, although only static networks are therein considered.  In this brief subsection, we aim to make these relations more explicit. For simplicity of notation, we shall treat the case of the entire network, although the same ideas can be applied also to the synchronization of clusters as we briefly highlight in Example \ref{ex:starnetwork}.

\begin{theorem}\label{thm:persistence-of-synchronization}
    Let $f:\R\times \R^M\to\R^M$ be a $\LC$ function and $A:\R\to\R^{N\times N}$ be locally integrable, and consider the network 
    \begin{equation}
     \dot x_i = f(t,x_i) + \sum_{k=1}^N a_{ik}(t)(x_k-x_i),\quad x_i\in\R^M,\, i=1,\dots, N,
    \end{equation}
    Moreover, assume that the assumptions of {\rm Theorem ~\ref{thm:sync_up_to_constant}} are satisfied with $\mu=0$ in~\ref{H2}, and thus sharp synchronization of the entire network is achieved. The following statements are true.
    \begin{itemize}[leftmargin=*,itemsep=2pt]
        \item For every $\delta>0$, such that if $f_i:\R\times \R^M\to\R^M$, for $i=1,\dots, N$, are $\LC$ functions and 
        \begin{equation}   \sup_{\substack{x\in\R^M,\,t\in\R}}|f(t,x)-f_i(t,x)|<\delta,
        \end{equation}
        the perturbed network~\eqref{eq1} synchronizes up to a constant $4\rho\delta\sqrt{N(N-1)/2}/\overline \gamma$, provided that condition~\ref{H2} is still satisfied with $\mu=0$.
        \item If $B:\R\to\R^{N\times N}$, defined by $B(t)=(b_{ij}(t))_{i,j=1,\dots,N}$ is locally integrable and $\sup_{t\in\R_+} |B(t)|<\overline \gamma/4$, then the perturbed network 
        \begin{equation}
         \dot x_i = f(t,x_i) + \sum_{k=1}^N \big(a_{ik}(t)+b_{ik}(t)\big)(x_k-x_i),\quad x_i\in\R^M,\,  i=1,\dots, N,
        \end{equation}
        achieves sharp synchronization.
    \end{itemize}  
\end{theorem}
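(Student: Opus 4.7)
The plan is to view both items as consequences of Corollary~\ref{cor:sharp_sync} after isolating the effect of each perturbation on the comparison system $\dot u = E(t)u + \beta(t)$ built in the proof of Theorem~\ref{thm:sync_up_to_constant}. In the unperturbed, homogeneous setting one has $\beta\equiv 0$ (Remark~\ref{rmk:f_i=f_j implies_beta=0}) and the principal matrix solution of $\dot u = E(t)u$ satisfies $\|U(t,s)\|\le e^{-\overline\gamma(t-s)}$ for $t\ge s\ge t_0$. The two items, respectively, show that these two pieces of structure are robust against perturbation of the node dynamics and of the coupling.

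For item one, I would replace $f$ by $f_i$ in~\ref{H1} and split
\begin{equation*}
\langle x-y, f_i(t,x)-f_j(t,y)\rangle = \langle x-y, f(t,x)-f(t,y)\rangle + \langle x-y, (f_i-f)(t,x)-(f_j-f)(t,y)\rangle,
\end{equation*}
bounding the first summand via the Lipschitz constant of $f$ (exactly as in the unperturbed case) and the second via Cauchy--Schwarz together with $|x-y|\le 2\rho$ (from~\ref{H2} with $\mu=0$) and $\|f-f_i\|_\infty<\delta$. The output is that~\ref{H1} still holds with the \emph{same} $\alpha^\rho_{ij}$ as before, and with a new $\beta^\rho_{ij}$ that is essentially bounded by a constant multiple of $\rho\delta$. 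Since $\alpha^\rho_{ij}$ is unaffected, the matrix $E(t)$ of the linear comparison problem, and hence the estimate on $\|U(t,s)\|$, is preserved, and Corollary~\ref{cor:sharp_sync} immediately yields synchronization up to a constant scaling as $\rho\delta\sqrt{N(N-1)/2}/\overline\gamma$; tracking the explicit numerical factors delivers the constant announced in the statement.

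For item two, the perturbation of $A(t)$ by $B(t)$ modifies $E(t)$ additively by a matrix $D(t)$ whose entries are, by direct inspection of~\eqref{eq:entries_E}, explicit linear combinations of the $b_{ik}(t)$'s; the subadditivity of $\eta$ and of $|\cdot|$ then produces a pointwise bound $\|D(t)\|\le c_N |B(t)|$ for a dimensional constant $c_N$. Because the network remains homogeneous, $\beta\equiv 0$, so the new comparison problem is the homogeneous system $\dot u = (E(t)+D(t))u$. I would then invoke the roughness of the exponential dichotomy (see~\cite{coppel2006dichotomies}): since $\dot u = E(t)u$ has exponential dichotomy on $[t_0,\infty)$ with projector the identity, constant $K=1$, and rate $\overline\gamma$, as soon as $\sup_t\|D(t)\|$ falls below the standard roughness threshold—which in the case $K=1$ reduces to $\overline\gamma/4$—the perturbed system retains an exponential dichotomy with projector the identity and a strictly positive (if slightly smaller) rate. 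Applying Corollary~\ref{cor:sharp_sync} with $\beta\equiv 0$ to this perturbed comparison system then gives sharp synchronization.

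The main obstacle is quantitative bookkeeping: in item two one must verify that the induced perturbation $D(t)$ is controlled by $|B(t)|$ with a small enough constant, so that the threshold required for roughness is indeed the $\overline\gamma/4$ announced in the statement rather than a weaker one; and, symmetrically, that in item one the new $\beta^\rho_{ij}$'s combine through the $\sqrt{N(N-1)/2}$ factor exactly into the constant $4\rho\delta\sqrt{N(N-1)/2}/\overline\gamma$. Both points reduce to inspecting the construction of $E(t)$ in the proof of Theorem~\ref{thm:sync_up_to_constant} together with the quantitative form of the roughness theorem in the case $K=1$; no genuinely new dynamical idea is required. An analogous persistence statement for clusters follows by the same scheme once $E(t)$ is replaced by the cluster-comparison matrix of Theorem~\ref{thm:sync_cluster}.
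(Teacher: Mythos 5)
Your proposal is correct and follows essentially the same route as the paper: for the first item the paper inserts $f$ to split $\langle x-y,f_i(t,x)-f_j(t,y)\rangle$, bounds the cross terms by Cauchy--Schwarz to get~\ref{H1} and~\ref{H3} with a constant $\beta_{ij}$ of order $\rho\delta$, and then applies Corollary~\ref{cor:sharp_sync} to read off the constant $4\rho\delta\sqrt{N(N-1)/2}/\overline\gamma$, exactly as you do. For the second item the paper likewise invokes roughness of the exponential dichotomy via Proposition 4.1 of~\cite{coppel2006dichotomies}; in fact its proof is a one-line citation that does not carry out the bookkeeping relating the induced perturbation $D(t)$ of the comparison matrix $E(t)$ to $|B(t)|$, so the quantitative caveat you flag is, if anything, treated more explicitly in your proposal than in the paper.
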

 \begin{proof}
     In order to prove the first statement, note that for all $t\in\R$,
     \begin{equation}
     \begin{split}
     \langle x-y,&f_i(t,x)-f_j(t,y)\rangle=
     \langle x-y,f_i(t,x)-f(t,x)\rangle+\\ &\qquad\qquad\qquad+\langle x-y,f(t,x)-f(t,y)\rangle+ \langle x-y,f(t,y)-f_j(t,y)\rangle\\
     &\le l^r(t)|x-y|^2+2r\delta,\qquad\text{for all } x,y\in B_r. 
     \end{split}
     \end{equation}
     For the previous chain of inequality we have used Remark~\ref{rmk:f_i=f_j implies_beta=0}, Cauchy-Schwarz inequality and the assumption of $f_i, f_j\in \LC$. Therefore~\ref{H1} and~\ref{H3} hold true with $\beta_{ij}(t)=2r\delta$ for all $t\in\R$ and all $i,j=1,\dots, N$, while~\ref{H2} is satisfied by assumption. Hence, Corollary~\ref{cor:sharp_sync} applies. Noting that $\|\beta(\cdot)\|_{L^\infty}=4\rho\delta\sqrt{N(N-1)/2}$, one has that for all $i,j=1,\dots, N$,
     \begin{equation}
\lim_{t\to\infty}|\sigma_i(t)-\sigma_j(t)|^2<\frac{4\rho\delta\sqrt{N(N-1)/2}}{\overline \gamma}.
\end{equation}

The second statement is a direct consequence of the roughness of the exponential dichotomy and it is obtained applying~\cite[Proposition 4.1]{coppel2006dichotomies}.
\end{proof}

\subsection{An application to time-dependent perturbations of static networks}\label{sec:time_dependent_perturbations}

It is well-known that strongly diffusely coupled, static networks of identical nodes locally synchronize provided that the global coupling overcomes a certain threshold, see e.g.,~\cite{pereira2011stability,pereira2013towards}. The case of nonidentical nodes, under further constraints, has also been considered, see for example~\cite{zhao2010synchronization}. In this section we discuss how our theory relates to this classic result both in terms of synchronization and time-dependent perturbation. Furthermore, we showcase such relations by means of an example on a star network at the end of the section.

Firstly, we show consistency of the two theories: the inequalities of Theorem~\ref{thm:sync_up_to_constant} are always verified by static strongly connected networks  of identical nodes (and as a matter of fact even more general static networks), provided that the global coupling overcomes a certain threshold.

\begin{corollary}\label{cor:sync-static-network}
    Consider a static connected network of identical nodes satisfying \ref{H2} and \ref{H3},
    \begin{equation}
\dot x_i = f(x_i) + c\sum_{k=1}^N a_{ik}(x_k-x_i), x_i\in\R^M,\,  i=1,\dots, N,
    \end{equation}
    with $a_{ij}\in\R$ for all $i,j=1,\dots, N$ and global coupling strength $c>0$. Assume, furthermore, that  $2(a_{ij}+a_{ji})+\sum_{\substack{k=1\\k\neq i,j}}\Big(a_{jk}+a_{ik}-\big|a_{jk}-a_{ik}\big|\Big)>0$ for every $i,j=1,\dots, N$---e.g.,~this is true if the graph is strongly connected and $a_{ij}\ge0$ for all $i,j=1,\dots, N$.
 Then, there is $\overline c>0$ such that for $c> \overline c$ the assumptions of {\rm Theorem~\ref{thm:sync_up_to_constant}} are satisfied and the network achieves synchronization.
\end{corollary}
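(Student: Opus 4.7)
The plan is to verify the two conditions~\eqref{eq:cond_sync_1} and~\eqref{eq:cond_sync_2} of {\rm Theorem~\ref{thm:sync_up_to_constant}} applied to the network~\eqref{eq1g}, regarded as an instance of~\eqref{eq1} with effective adjacency $c\,a_{ij}$. Since all nodes carry the same autonomous vector field $f\in\LC$, Remark~\ref{rmk:f_i=f_j implies_beta=0} ensures that~\ref{H1} holds with the constant Lipschitz coefficient $\alpha_{ij}^\rho=l^\rho$ of $f$ on $B_\rho$ and $\beta_{ij}^\rho\equiv 0$. Consequently~\ref{H3} is trivial with $\mu_1=0$, so the right-hand side of~\eqref{eq:cond_sync_2} becomes $-\log(1-\mu_1/M)=0$ and it suffices to exhibit $\overline\gamma>0$; assumption~\ref{H2} is part of the standing hypotheses.

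The next step is a convenient reformulation of the standing hypothesis. Using the elementary identity $a+b-|a-b|=2\min(a,b)$, the hypothesis becomes
\begin{equation*}
\kappa_{ij}:=2(a_{ij}+a_{ji})+2\sum_{k\neq i,j}\min(a_{jk},a_{ik})>0\quad\text{for all }i\neq j.
\end{equation*}
Introduce also the ``averaged degree'' $\tau_{ij}:=(a_{ij}+a_{ji})+\tfrac12\sum_{k\neq i,j}(a_{jk}+a_{ik})$; since $\min(a,b)\le(a+b)/2$, one has $\tau_{ij}\ge\kappa_{ij}/2>0$.

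Plugging the effective weights $c\,a_{ij}$ into the two conditions, \eqref{eq:cond_sync_1} reads $\delta_{ij}=l^\rho-c\,\tau_{ij}<0$, which holds as soon as $c>l^\rho/\tau_{ij}$. Under this assumption, a direct expansion produces the key algebraic identity
\begin{equation*}
\gamma_{ij}=2|\delta_{ij}|-c\sum_{k\neq i,j}|a_{jk}-a_{ik}|=-2l^\rho+c\bigl(2\tau_{ij}-\textstyle\sum_{k\neq i,j}|a_{jk}-a_{ik}|\bigr)=-2l^\rho+c\,\kappa_{ij},
\end{equation*}
so that $\gamma_{ij}>0$ whenever $c>2l^\rho/\kappa_{ij}$. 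Setting $\overline c:=\max_{i\neq j}\{l^\rho/\tau_{ij},\,2l^\rho/\kappa_{ij}\}$, both conditions of {\rm Theorem~\ref{thm:sync_up_to_constant}} are satisfied for every $c>\overline c$, and {\rm Corollary~\ref{cor:sharp_sync}} (since $\beta\equiv 0$) yields sharp synchronization.

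The only genuine step is the algebraic identity $\gamma_{ij}=c\,\kappa_{ij}-2l^\rho$, which reveals that the positive quantity appearing in the hypothesis is precisely the row-dominance margin (after the $c$-scaling) of the comparison matrix $E(t)$ from the proof of {\rm Theorem~\ref{thm:sync_up_to_constant}}; everything else is just matching constants and choosing $c$ large.
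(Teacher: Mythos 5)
Your proposal is correct and follows essentially the same route as the paper: verify \eqref{eq:cond_sync_1} and \eqref{eq:cond_sync_2} for the scaled weights $c\,a_{ij}$, using $\beta_{ij}\equiv 0$ and $\alpha^\rho_{ij}=l^\rho$ for identical nodes, and observe that $\gamma_{ij}=c\,\kappa_{ij}-2l^\rho$ with $\kappa_{ij}$ the quantity assumed positive, so both conditions hold for $c$ large. Your rewriting of the hypothesis via $a+b-|a-b|=2\min(a,b)$ is only a cosmetic repackaging of the same algebraic identity the paper uses.
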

\begin{proof}
The assumptions on the edges of the network imply that $a_{ij}+a_{ji}
    +\frac{1}{2}\sum_{\substack{k=1\\k\neq i,j}}^N\big(a_{jk}+a_{ik}\big)>0$ for all $i,j=1,\dots, N$. Hence, there is $c_1>0$ such that for any pair $i,j=1,\dots, N$ with $i<j$,
    \begin{equation}
    \delta_{ij}= l^\rho-c\Big(a_{ij}+a_{ji}
    +\frac{1}{2}\sum_{\substack{k=1\\k\neq i,j}}^N\big(a_{jk}+a_{ik}\big)\Big)<0,\qquad\text{for } c>c_1,
    \end{equation}
    where $l^\rho$ is the Lipschitz coefficient for $f$ on $B_\rho$  and the same notation of Theorem~\ref{thm:sync_up_to_constant} has been used. 
    Since $\beta_{ij}$ can be taken equal to zero for identical nodes (see Remark~\ref{rmk:f_i=f_j implies_beta=0}), we also have that for $c>c_1$,
    \begin{equation}
            2|\delta_{ij}|-c\sum_{\substack{k=1\\k\neq i,j}}^N\big|a_{jk}-a_{ik}\big|=2c(a_{ij}+a_{ji})-2l^\rho+c\sum_{\substack{k=1\\ k\neq i,j}}^N\big(a_{jk}+a_{ik}-\big|a_{jk}-a_{ik}\big|\big).
    \end{equation}
    Again, the assumptions on the network topology imply that a $\overline c\ge c_1$ exists such that both inequalities of Theorem~\ref{thm:sync_up_to_constant} are satisfied for $c>\overline c$ and in such a case the system synchronizes.
\end{proof}

Of further interest, is the fact that the synchronization achieved beyond the coupling threshold presented in the previous corollary is robust against small perturbations of the type discussed in Theorem~\ref{thm:persistence-of-synchronization}.

\begin{corollary}\label{cor:perturbed-static-network}
Consider the static network of identical nodes,
    \begin{equation}
        \dot x_i = f(x_i) + c\sum_{k=1}^N a_{ik}(x_k-x_i), x_i\in\R^M,\,  i=1,\dots, N,
    \end{equation}
with coupling coefficients $a_{ik}\ge0$ for all $ i,k=1,\dots, N$ and global coupling strength $c$ greater than the coupling threshold $\overline c$ in {\rm Corollary~\ref{cor:sync-static-network}}. Then, any sufficiently small (possibly time-dependent) perturbation in the sense of {\rm Theorem~\ref{thm:persistence-of-synchronization} } will also produce synchronization.
\end{corollary}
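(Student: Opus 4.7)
The plan is to combine Corollary~\ref{cor:sync-static-network} with Theorem~\ref{thm:persistence-of-synchronization}. First I would use Corollary~\ref{cor:sync-static-network} to assert that, for $c>\overline c$, the hypotheses of Theorem~\ref{thm:sync_up_to_constant} are satisfied for the unperturbed static network. Since the nodes are identical, Remark~\ref{rmk:f_i=f_j implies_beta=0} allows one to take $\beta_{ij}\equiv 0$ in~\ref{H1}, and combined with~\ref{H2} holding with $\mu=0$, Corollary~\ref{cor:sharp_sync} yields sharp synchronization. Crucially, this synchronization is governed by an exponential dichotomy with projector the identity and uniform rate $\overline\gamma>0$ for the linear nonautonomous problem bounding the evolution of the synchronization errors.

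Next I would simply activate both bullets of Theorem~\ref{thm:persistence-of-synchronization}. For a sufficiently small $\LC$-perturbation of the common vector field, that is $f_i$ with $\sup_{t,x}|f(x)-f_i(t,x)|<\delta$ for $\delta$ small, the first bullet produces synchronization up to the constant $4\rho\delta\sqrt{N(N-1)/2}/\overline\gamma$, which vanishes as $\delta\to 0$. For a sufficiently small locally integrable perturbation $B(\cdot)$ of the adjacency matrix with $\sup_t\|B(t)\|<\overline\gamma/4$, the second bullet directly preserves sharp synchronization via the roughness of the exponential dichotomy (Proposition~4.1 of~\cite{coppel2006dichotomies}). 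Combining the two, simultaneous small perturbations of both node dynamics and adjacency matrix yield synchronization up to an arbitrarily small constant.

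The main obstacle is the implicit requirement in the first bullet of Theorem~\ref{thm:persistence-of-synchronization} that~\ref{H2} with $\mu=0$ continues to hold for the perturbed coupled problem. I would handle this by invoking the persistence of the local attractor of the unperturbed coupled system: since sharp synchronization for the unperturbed network is controlled by an exponential dichotomy, the roughness of the dichotomy implies that the attracting synchronizing trajectory, together with a tubular neighborhood, persists under sufficiently small $\LC$-perturbations of either the nodes or the weights. Equivalently, one may apply the sufficient conditions of Section~\ref{sec:suff_cond_attr}, which are themselves stable under the type of perturbation considered, to directly recover~\ref{H2}. This persistence of the inflowing invariant set is the only non-mechanical ingredient; once it is secured, the statement follows immediately from the two bullets of Theorem~\ref{thm:persistence-of-synchronization}.
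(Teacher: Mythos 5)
Your proposal is correct and follows essentially the same route as the paper, whose proof is simply the observation that the statement is a direct consequence of Corollary~\ref{cor:sync-static-network} (which puts the unperturbed static network above the threshold where Theorem~\ref{thm:sync_up_to_constant} applies) combined with Theorem~\ref{thm:persistence-of-synchronization}. Your additional discussion of why~\ref{H2} with $\mu=0$ persists under perturbation is a reasonable elaboration, but the paper simply inherits that proviso through the phrase ``in the sense of Theorem~\ref{thm:persistence-of-synchronization}'', so no new argument is needed there.
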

\begin{proof}
    The result is a direct consequence of  Theorem~\ref{thm:persistence-of-synchronization} and Corollary~\ref{cor:sync-static-network}.
\end{proof}

\begin{example}[Synchronization of chaotic oscillators on star networks]\label{ex:starnetwork}
Although dynamics on undirected star networks can be studied, for example, by spectral methods at least in the static case, we can also use Theorem~\ref{thm:sync_up_to_constant} and the Corollaries~\ref{cor:sync-static-network} and~\ref{cor:perturbed-static-network}
to find conditions that lead to synchronization on time-varying directed ones. 

We consider the following setup: the central node (the hub), we call it $x_1$, has directed outgoing edges with weights $a_{i1}=a\in\R$, $i=2,\ldots,N$, with the rest of the network, while the remainder of the nodes (the leaves, or satellites) have corresponding weight $a_{1j}=b\in\R$, $j=2,\ldots,N$; 
every other weight $a_{ij}$ is zero due to the considered graph structure---a schematic representation is shown in  Figure~\ref{fig:star}. We shall hereby assume that all the nodes have identical dynamics (hence $\beta^r_{ij}=0$ for all $r>0$ and all $i,j=1,\dots,N$), and that a global attractor exists and $\alpha^r_{ij}(t)<\alpha\in\R$ for all $i,j=1,\dots,N$ in a sufficiently big ball of radius $r>0$ containing the global attractor. Notice, however, that if $\alpha_{ij}(t)\leq0$ for all $i,j=1,\ldots, N$ and all $t\in\R$, then it suffices to let $\alpha=0$. 

\begin{figure}
    \centering

\begin{tikzpicture}
\def \radius {3cm}

\node[draw, very thick, circle] at (360:0mm) (center) {$x_1$};
\foreach \i [count=\ni from 0] in {N,2,3,4,5,6,7}{
  \node[draw, very thick, circle] at ({90-\ni*45}:\radius) (u\ni) {$x_{\i}$};
  \draw[->] (center) to[bend right=15](u\ni);
  \draw[->] (u\ni) to[bend right=15](center);
}

\draw[line width=3pt, line cap=round, dash pattern=on 0pt off 4\pgflinewidth] (120:.7*\radius) arc[start angle=120, end angle=150, radius=.7*\radius];

\node at (166:0.55*\radius) {$a$};
\node at (184:0.5*\radius) {$b$};
    
    \end{tikzpicture}
\caption{A representation of the star network setup used in Example~\ref{ex:starnetwork}. Node $x_1$ is set as the hub and it has directed outgoing edges with positive weight $a_{i1}=a>0$, $i=2,\ldots,N$. Every other node $x_j$, for $j=2,\ldots,N$ has only one directed outgoing edge towards $x_1$, which has weight $a_{1j}=b<0$.}
    \label{fig:star}
\end{figure}
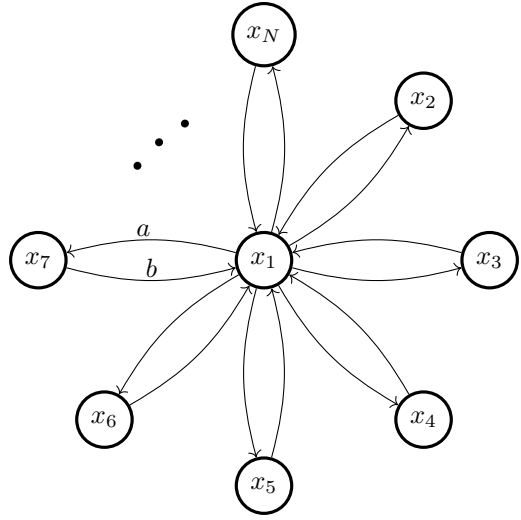

Under the described setting, one finds that, for any pair of satellites, i.e.~$1<i<j$,
\begin{equation}\label{eq:ex_star_sats}
   0<2(a_{ij}+a_{ji})+\sum_{\substack{k=1\\k\neq i,j}}^N\Big(a_{jk}+a_{ik}-\big|a_{jk}-a_{ik}\big|\Big)=2a \quad\Leftrightarrow \quad a>0,
\end{equation}
whereas, for the hub ($i=1$) and any other satellite ($j>1$),
\begin{equation}\label{eq:ex_star}
    \begin{split}          0<2(a_{1j}+a_{j1})&+\sum_{\substack{k=1\\k\neq 1,j}}^N\Big(a_{jk}+a_{1k}-\big|a_{jk}-a_{1k}\big|\Big)=2(b+a)+(N-2)(b-|b|),
    \end{split}
\end{equation}
and the previous inequality is satisfied if and only if
\begin{equation}
\text{either}\quad\circled{A}\,
\begin{cases}
    b<0,\\
    a>-b(N-1),
\end{cases}
\qquad\text{or}\qquad
\circled{B}\,\begin{cases}
    b\ge0,\\
    a\ge-b.
\end{cases}
\end{equation}
Since we assume $a>0$ (due to \eqref{eq:ex_star_sats}), then \circled{B} holds always true, while in case \circled{A} ($b<0$), some care needs to be taken in choosing $a$ depending on the size of the network and the modulus of $b$.  It is also straightforward to extend the previous arguments to the time-varying case $a_{ij}=a_{ij}(t)$, provided that the $a_{ij}$'s are bounded since it would suffice that for almost all $t\geq t_0$,
\begin{equation}
    \begin{split}
        a\le \min_{i=2,\ldots,N}\left\{a_{i1}(t)\right\}\,\qquad\text{and}\qquad
        b\le\min_{j=2,\ldots,N}\left\{ a_{1j}(t) \right\}.
    \end{split}
\end{equation}
In conclusion, on a star network and provided that~\ref{H1}-\ref{H3} hold, no matter the (time-varying, possibly negative) influence of the satellites, the hub can always induce synchronization if $a>0$ is sufficiently big. Moreover the synchronization error can be made as small as desired if a global coupling $c>0$ intervenes---see Corollaries  \ref{cor:global_coupling} and \ref{cor:sync-static-network}.

To verify our arguments, we consider a directed star network where the nodes are Lorenz systems, that is, the internal dynamics of each node is given by
\begin{equation}
    \begin{split}
        \dot x_i &= \sigma(y_i-x_i)\\
        \dot y_i&= x_i(\rho-z_i)-y_i\\
        \dot z_i&=x_iy_i-\beta z_i,
    \end{split}
\end{equation}
where, unless otherwise is stated, the parameters are $\sigma=10$, $\rho=28$, $\beta=\frac{8}{3}$. We focus on the less immediate case of negative edges and set  the influence of the leaves to the hub to $a_{1j}=b=-1$, $j=2,\ldots,N$. Whenever a simulation depends on a random choice of parameters, we always show a representative among dozens of simulations. All simulations have been performed in Matlab using ODE45 with initial conditions randomly chosen near the origin. For the simulations we shall use an error
\begin{equation}\label{eq:l_error}
    \hat e = \sqrt{e_x^2+e_y^2+e_z^2}
\end{equation}
defined by
\begin{equation}
e_\zeta=e_\zeta(t)=\max_{ij}|\zeta_i(t)-\zeta_j(t)|,\qquad \zeta=x,y,z,\; i=1,\ldots,N,
\end{equation}
corresponding to the maximum of pairwise errors at each time $t$.

First, in Figure~\ref{fig:lorenz1} we show simulations for $N=5$ nodes, for three different scenarios, see the description in the caption. The common feature is that following the analysis presented above, synchronization of the network can be achieved. 
\begin{figure}[H]
    \centering
    \includegraphics{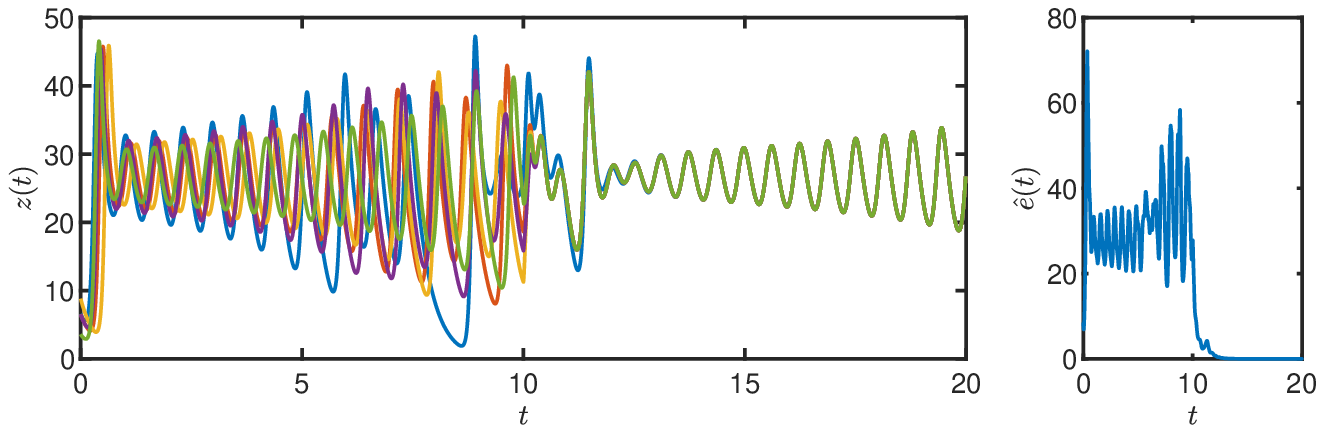}\\
    \includegraphics{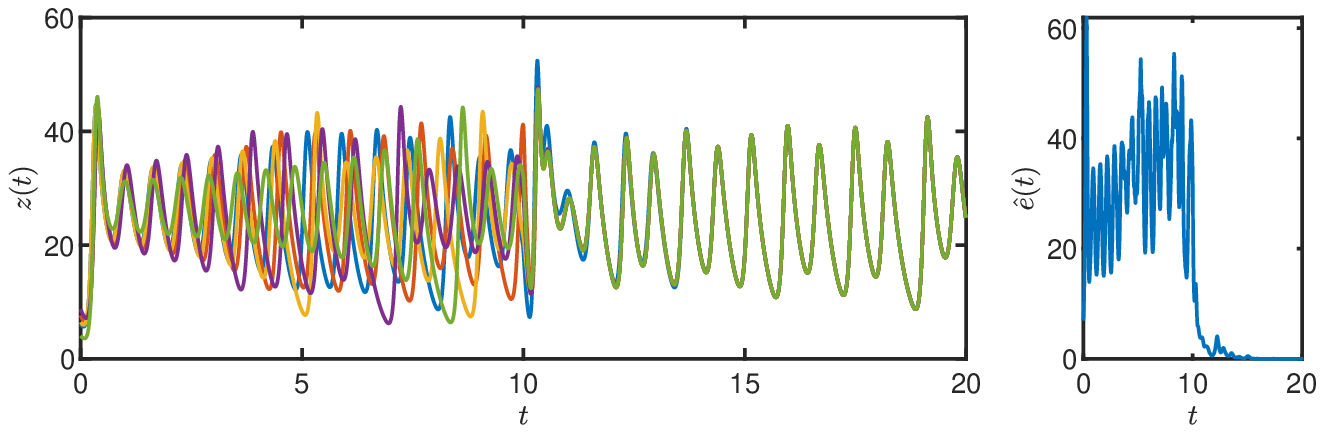}
    \includegraphics{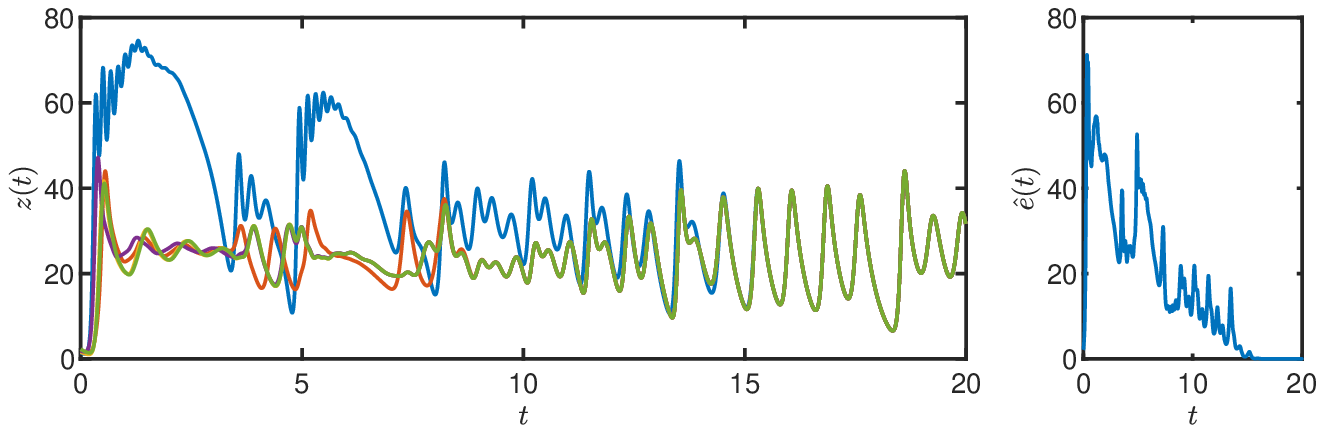}
    \caption{Simulations of three distinct numerical experiments for a star network of $N=5$ Lorenz oscillators. For convenience we show only the time series of the $z$-coordinate on the left and the corresponding error $\hat e$ \eqref{eq:l_error} on the right. In the first plot, we let the network be disconnected for the first 10 time units. At $t=10$ we connect the network with $a_{i1}=a=5$ and $a_{1j}=b=-1$ (see \protect\circled{A} above), and global coupling $c=2$. With these parameters, Corollary \ref{cor:sync-static-network} guarantees synchronization, as verified in the plot for $t\geq10$. To showcase the persistence of synchronization, the second plot shows a similar setting as the first, but with all weights perturbed as $a_{ij}\mapsto a_{ij}(1+\frac{1}{10}\sin(\omega_{ij}t))$ for some randomly chosen frequencies $\omega_{ij}\in(\pi,2\pi)$. Note that for the considered static example $1/10<\min_{i,j=1,\dots,N}\overline \gamma_{ij}/4$ for all $t>10$. Therefore, Corollary \ref{cor:perturbed-static-network} applies. Finally, the third plot shows the case where $a=a(t)=4+3\tanh\left( \frac{1}{5}(t-10)\right)$. Hence this plot shows the transition from \eqref{eq:ex_star} not holding $(a<4)$ to where it does. Notice that since \eqref{eq:ex_star_sats} holds, we observe that first the satellites tend to form a cluster, to later transition to full synchronization. }
    \label{fig:lorenz1}
\end{figure}
Finally, we consider a large network of heterogeneous Lorenz systems. For this we let $N=200$, and all the leaves have randomly picked parameters $\sigma_i\in(\sigma-1,\sigma+1)$, $\rho_i\in(\rho-1,\rho+1)$, $\beta_i\in(\beta-1,\beta+1)$. Similar to the second experiment in Figure~\ref{fig:lorenz1}, we let $a_{1j}=b\left(1+\frac{1}{10}\sin(\omega_{1j}t) \right)$, $b=-1$, and $a_{j1}=-2(N-1)b\left(1+\frac{1}{10}\sin(\omega_{j1}t) \right)$; for some randomly set frequencies $\omega_{ij}\in(\pi,2\pi)$, and $c=1$. For presentation purposes we prefer to show in Figure~\ref{fig:lorenz3} only the corresponding error \eqref{eq:l_error}.

\begin{figure}
    \centering
\includegraphics{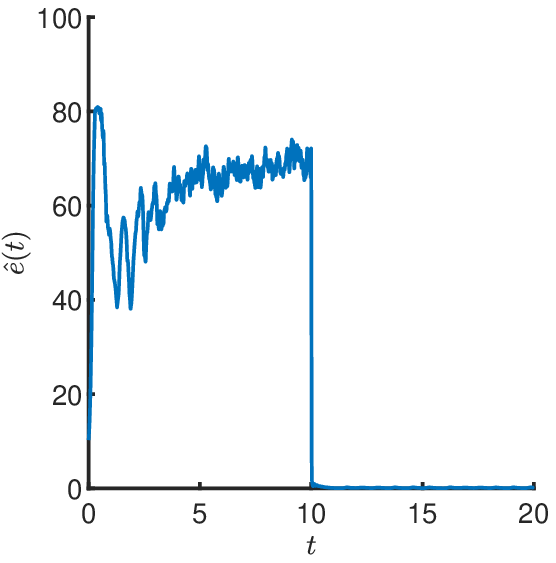}
    \caption{ Synchronization error~\eqref{eq:l_error} for a temporal star network with $N=200$ heterogeneous Lorenz oscillators. In this figure, for the first $10$ time units, the network is disconnected. Afterwards, for $t\geq10$ the network is connected as described above so that Theorem~\ref{thm:sync_up_to_constant} holds, leading to synchronization.}
    \label{fig:lorenz3}
\end{figure}

\end{example}

\section{Existence of local attractors}\label{sec:suff_cond_attr} 

In this final section, we provide a sufficient condition for the existence of an attracting trajectory for each equation of~\eqref{eq1} so that~\ref{H2} is satisfied. The treatment largely owes to the theory developed by Caraballo et al.~\cite{caraballo2008synchronization}. A substantial generalization intervenes in the sense that now all the considered inequalities involve locally integrable functions in place of constants and more than two coupled systems are considered. This is in line with the rest of the treatment in our work. Our fundamental inspiration for this type of generalization is the work by Longo et al.~\cite{longo2019weak}.

\subsection{The uncoupled problem}

Given $f\in\LC$, we shall consider the following assumptions,
\begin{enumerate}[label=\upshape(\textbf{SL1}),leftmargin=32pt,itemsep=2pt]
\item\label{SL}
(\emph{local one-sided Lipschitz continuity}) there exists a nonempty forward invariant uniformly bounded nonautonomous set $\U\subset\R\times\R^M$ and a function $l\in L^1_{loc}$ such that for almost every $t\in\R$, $B_r\subset\U_t$   and,
\begin{equation}\label{eq:SL}
\qquad2\langle x_1-x_2,f(t,x_1)-f(t,x_2)\rangle\le l(t)|x_1-x_2|^2,\ \ \text{for all } x_1,x_2\in \U_t.
\end{equation}
\end{enumerate}
\begin{enumerate}[label=\upshape(\textbf{SL2}),leftmargin=32pt,itemsep=2pt]
\item\label{SL2} (\emph{dissipativity}) 
given~\ref{SL}, there are constants $K\ge1$ and $\gamma>0$ such that 
\begin{equation}\exp\left(\int_{t_0}^tl(s)\,ds\right)\le K e^{-\gamma(t-t_0)},\quad\text{for any }t_0\le t.\end{equation}
\end{enumerate}



For practical reasons, we shall assume that the origin, denoted by the vector $\mathbf{0}\in\R^M$, belongs to $\U_t$ for almost every $t\in\R$. 
This hypothesis is not restrictive: let us assume that there is a set $S\subset \R$ with positive measure for which $\mathbf{0}\notin \U_t$ for all $t\in S$.  
Since $\U$ is nonempty, uniformly bounded and forward invariant there is at least one entire solution $\zeta(t)$ whose graph is contained in $\U$.
Then, the time dependent change of variables $(t,y)=(t,x-\zeta(t))$ returns a new vector field $\widetilde f$ and a new forward invariant uniformly bounded nonautonomous set $\widetilde \U$ for which $\mathbf{0}\in \widetilde \U_t$ for almost all $t\in\R$.
This fact will be important in the proof of some of the following results.\par\smallskip

Next, we show that condition~\ref{SL} can be extended to pairs of continuous functions and the inequality~\eqref{eq:SL} keeps holding almost everywhere.
\begin{proposition}\label{prop:one-sided-dissipative-for-functions}
Let $f\in\LC$ and assume~\ref{SL} holds. If $I\subset \R$ is an interval and $\phi,\psi\in C(I,\R^M)$ with $\phi(t),\psi(t)\in\U_t$ for almost every $t\in I$, then
\begin{equation}
2\langle \phi(t)-\psi(t),f\big(t,\phi(t)\big)-f\big(t,\psi(t)\big)\rangle\le l(t)|\phi(t)-\psi(t)|^2
\end{equation}
for almost every $t\in I$.
\end{proposition}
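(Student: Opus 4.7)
The plan is to follow the template of the proof of Lemma~\ref{lem:two_nodes_assumptions_from_points_to_functions}: extend the pointwise dissipativity inequality \ref{SL} to continuous functions $\phi,\psi$ via a countable dense subset of $I$, and then pass to the limit using the Carath\'eodory Lipschitz continuity \ref{L} of $f$ together with the continuity of $\phi$ and $\psi$.

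First I would fix a countable dense subset $D=\{s_n\mid n\in\N\}$ of $I$. For each fixed $n$, the pair $\bigl(\phi(s_n),\psi(s_n)\bigr)$ consists of fixed vectors of $\R^M$, so \ref{SL} applied to this specific pair produces a subset $J_n\subset I$ of full measure on which, for every $t\in J_n$,
\begin{equation*}
2\bigl\langle \phi(s_n)-\psi(s_n),\,f(t,\phi(s_n))-f(t,\psi(s_n))\bigr\rangle \le l(t)\,|\phi(s_n)-\psi(s_n)|^2.
\end{equation*}
Intersecting, $J=\bigcap_{n\in\N}J_n\subset I$ is still of full measure. Fix an arbitrary $t\in J$ and, using the density of $D$, choose a subsequence $s_{n_k}\to t$; the displayed inequality then holds at this $t$ with $s_{n_k}$ in place of $s_n$ for every $k\in\N$.

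Next I would pass to the limit as $k\to\infty$. Continuity of $\phi$ and $\psi$ gives $\phi(s_{n_k})\to\phi(t)$ and $\psi(s_{n_k})\to\psi(t)$, while the Lipschitz condition \ref{L} of $f$ (applied on the compact set obtained from the uniform bound of $\U$) yields $f(t,\phi(s_{n_k}))\to f(t,\phi(t))$ and $f(t,\psi(s_{n_k}))\to f(t,\psi(t))$. Continuity of the inner product and of the norm then produce the target inequality at every $t\in J$, and hence for a.e.\ $t\in I$.

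The delicate point, and the only real obstacle, is ensuring that the pointwise estimate \ref{SL} is legitimately applicable at time $t$ to the fixed pair $(\phi(s_n),\psi(s_n))$: as stated, \ref{SL} requires the arguments to lie in the fiber $\U_t$, which in principle differs from $\U_{s_n}$. This is resolved by the uniform boundedness of $\U$ together with the inclusion $B_r\subset\U_t$ valid for almost every $t\in\R$ which, thanks to continuity of $\phi$ and $\psi$ and the assumption $\phi(t),\psi(t)\in\U_t$ a.e., places the values $\phi(s_n),\psi(s_n)$ in a fixed ball $B_R$ on which the estimate in \ref{SL} transfers uniformly in $t$. Once this technical transfer is in place, the density-and-continuity argument proceeds essentially verbatim as in Lemma~\ref{lem:two_nodes_assumptions_from_points_to_functions}.
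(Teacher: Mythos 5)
Your overall route is exactly the paper's: the published proof is a one-line reference to the argument of Lemma~\ref{lem:two_nodes_assumptions_from_points_to_functions}, and your dense-times/full-measure-sets construction followed by the passage to the limit via \ref{L} and continuity of $\phi,\psi$ reproduces that argument faithfully. That part is fine.

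The problem is precisely at the ``delicate point'' you flagged, and your proposed fix does not close it. To run the density argument you must apply \ref{SL} \emph{at time $t$} to the frozen pair $\bigl(\phi(s_n),\psi(s_n)\bigr)$, i.e.\ you need $\phi(s_n),\psi(s_n)\in\U_t$ for almost every $t$, whereas the hypotheses only give $\phi(s_n),\psi(s_n)\in\U_{s_n}$. Your resolution argues that uniform boundedness places these values in a fixed ball $B_R$ ``on which the estimate in \ref{SL} transfers uniformly in $t$'' --- but this is exactly what \ref{SL} does not say: the inequality is assumed on $\U_t$, and $B_R\supseteq\U_t$ is the wrong inclusion, so no transfer to $B_R$ is available; the inclusion $B_r\subset\U_t$ would help only if the values of $\phi,\psi$ were known to lie in the \emph{smaller} ball $B_r$, which is not implied by $\phi(t),\psi(t)\in\U_t$. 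The honest dichotomy is this: if one reads \ref{SL} with the exceptional null set in $t$ independent of the spatial points (``for a.e.\ $t$, for \emph{all} $x_1,x_2\in\U_t$''), then the proposition is immediate --- for a.e.\ $t\in I$ both $\phi(t),\psi(t)\in\U_t$ and the inequality holds on all of $\U_t$, so one simply evaluates at $x_1=\phi(t)$, $x_2=\psi(t)$ and no density argument is needed at all; if instead one insists on the pairwise reading that makes the density argument of Lemma~\ref{lem:two_nodes_assumptions_from_points_to_functions} necessary, then the step ``apply \ref{SL} to $(\phi(s_n),\psi(s_n))$ at time $t$'' needs a genuine justification tied to the time-dependent fibers (e.g.\ an assumption that the inequality holds on a fixed ball containing all fibers), and your appeal to $B_R$ does not supply it. Either state the stronger reading and give the one-line proof, or restrict the argument to a fixed spatial set on which \ref{SL} is actually assumed.
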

\begin{proof}
A proof of this statement can be obtained reasoning as for Lemma~\ref{lem:two_nodes_assumptions_from_points_to_functions}.
\end{proof}

Note that~\ref{SL} guarantees that each pair of solutions $x(t),y(t)$ of the uncoupled problem $\dot x =f(t,x)$ with initial conditions in $\U$ will converge in forward time. Indeed, one immediately has that 
\begin{equation}
\frac{d}{dt}|x(t)-y(t)|^2\le l(t)|x(t)-y(t)|^2.
\end{equation}
Hence, using~\ref{SL2}we have that for any $t,t_0\in\R$, with $t\ge t_0$, and $x_0,y_0\in\U_{t_0}$,
\begin{equation}
|x(t)-y(t)|^2\le K e^{-\gamma(t-t_0)}|x_0-y_0|^2.
\end{equation}

In order to understand towards what these solutions are converging, we firstly have to show that~\ref{SL} implies a more standard dissipative condition.

\begin{proposition}\label{prop:dissipativity_condition}
Let $f\in\LC$ and assume~\ref{SL} holds. Then, $f$ is asymptotically dissipative, that is, for almost every $t\in\R$, it satisfies 
\begin{equation}\label{eq:dissipativity_condition}
2\langle x, f(t,x)\rangle\le \alpha(t)|x|^2 + \beta(t), \quad \text{for all } x\in\U_t,
\end{equation}
where $\alpha(\cdot),\beta(\cdot)\in L^1_{loc}$ and there is $0<\overline \gamma<\gamma$ such that
\begin{equation}\label{eq:one_node_dissipativity_cond_2}
\exp\left(\int_{t_0}^t\alpha(s)\,ds\right)\le K e^{-\overline\gamma(t-t_0)},\quad\text{for any }t_0\le t.
\end{equation}
\end{proposition}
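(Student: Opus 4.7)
The plan is to apply~\ref{SL} to the pair $(x,\mathbf{0})$, exploiting the normalization---discussed right after~\ref{SL2}---that $\mathbf{0}\in\U_t$ for almost every $t\in\R$. Indeed, setting $(x_1,x_2)=(x,\mathbf{0})$ in~\ref{SL} and rearranging yields, for a.e.~$t\in\R$ and every $x\in\U_t$,
\begin{equation*}
2\langle x,f(t,x)\rangle \le l(t)\,|x|^2 + 2\langle x,f(t,\mathbf{0})\rangle.
\end{equation*}

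Next, I would dispose of the residual inner product by combining the uniform boundedness of $\U$ (so that $\U_t\subset B_r$ for some $r>0$ and every $t\in\R$) with the Carathéodory $m$-bound of $f$ on the compact singleton $\{\mathbf{0}\}$, which I denote $m^{\{\mathbf{0}\}}\in L^1_{loc}$. This gives, via Cauchy--Schwarz,
\begin{equation*}
2\langle x,f(t,\mathbf{0})\rangle \le 2|x|\,|f(t,\mathbf{0})| \le 2r\,m^{\{\mathbf{0}\}}(t),
\end{equation*}
and thus \eqref{eq:dissipativity_condition} follows by choosing $\alpha(t):=l(t)\in L^1_{loc}$ and $\beta(t):=2r\,m^{\{\mathbf{0}\}}(t)\in L^1_{loc}$. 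The decay estimate~\eqref{eq:one_node_dissipativity_cond_2} is then inherited directly from \ref{SL2}: since $\exp\bigl(\int_{t_0}^t l(s)\,ds\bigr)\le K e^{-\gamma(t-t_0)}$, any $\overline\gamma\in(0,\gamma)$ works a fortiori.

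The computation is essentially algebraic and I do not expect any serious obstacle. The only point deserving care is the local integrability of $\beta$, which is precisely where the uniform boundedness of $\U$ is needed: without it, the residual would at best be controlled by $|x|\,m^{\{\mathbf{0}\}}(t)$, which is only pointwise---and not uniformly in $x\in\U_t$---dominated by an $L^1_{loc}$ function of $t$ alone. The remaining subtlety is bookkeeping of the ``almost every $t$'' clauses (inherited from both \ref{SL} and from condition~\ref{C2} in the definition of $\LC$), so that the resulting inequality holds pointwise a.e.~in $t$ simultaneously for \emph{every} $x\in\U_t$.
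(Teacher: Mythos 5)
Your argument is correct, and it proves exactly what the proposition asserts, but it absorbs the cross term differently from the paper. After the common first step (apply \ref{SL} with $x_2=\mathbf{0}$), the paper fixes $\varepsilon\in(0,\gamma/2)$ and uses the Young-type bound $2\langle x,f(t,\mathbf{0})\rangle\le 2\varepsilon|x|^2+\tfrac{2}{\varepsilon}|f(t,\mathbf{0})|^2$, which yields $\alpha(t)=l(t)+2\varepsilon$, $\beta(t)=\tfrac{2}{\varepsilon}|f(t,\mathbf{0})|^2$ and the degraded rate $\overline\gamma=\gamma-2\varepsilon$; you instead use Cauchy--Schwarz together with the uniform boundedness of $\U$ and the $m$-bound of \ref{C2} at the origin, which yields $\alpha=l$ (so the full rate $\gamma$ survives and any $\overline\gamma<\gamma$ works) and $\beta=2r\,m^{\{\mathbf{0}\}}$. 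Your choice has the advantage that $\beta$ is \emph{linear} in the $m$-bound and hence automatically in $L^1_{loc}$, whereas the paper's quadratic choice $|f(\cdot,\mathbf{0})|^2$ is not obviously locally integrable from \ref{C2} alone (this is implicitly covered only later, when the translates of $|f(\cdot,\mathbf{0})|^2$ are assumed $L^1_{loc}$-bounded). What the paper's route buys in exchange is twofold: it does not use boundedness of the fibers $\U_t$ in this step, and it produces precisely the function $\beta_t(\cdot)=|f(t+\cdot,\mathbf{0})|^2$ that Proposition~\ref{prop:pullback_forward_uncoupled} refers to; if one adopted your proof, the hypothesis there should be restated as $L^1_{loc}$-boundedness of the translates of $m^{\{\mathbf{0}\}}$ (equivalently of $|f(\cdot,\mathbf{0})|$), which is a mild and arguably more natural condition. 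Your closing remarks on the ``almost every $t$'' bookkeeping are also apt, since \ref{SL} and \ref{C2} are already quantified as a.e.\ in $t$ uniformly over the relevant compact sets, so intersecting the two full-measure sets suffices.
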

\begin{proof}
Firstly notice that
\begin{equation}
\begin{split}
\langle x, y\rangle&=\frac{1}{2}(|x+y|^2-|x|^2-|y|^2)\le \frac{1}{2}(|x|^2+|y|^2+2|x||y|-|x|^2-|y|^2)\\
&=\frac{1}{2}\big(|x|^2+|y|^2-(|x|-|y|)^2\big)\le |x|^2+|y|^2. 
\end{split}
\end{equation}
Fix $0<\ep<\gamma/2$ and $x\in\U_t$. Since~\ref{SL} holds, we have that
\begin{equation}
\begin{split}
  2\langle x,f(t,x)\rangle
  &\le 2\langle x,f(t,\mathbf{0})\rangle +l(t)|x|^2 \le 2\ep|x|^2+\frac{2}{\ep}| f(t,\mathbf{0})|^2  +l(t)|x|^2\\
  &=\big(2\ep+l(t)\big)|x|^2+\frac{2}{\ep}| f(t,\mathbf{0})|^2.
\end{split}
\end{equation}
Denoted $\alpha(t):=2\ep+l(t)$ and $\beta(t):=2/\ep| f(t,\mathbf{0})|^2$, note that we have 
\begin{equation}
\exp\left(\int_{t_0}^t\alpha(s)\,ds\right)= e^{2\ep(t-t_0)}\exp\left(\int_{t_0}^tl(s)\,ds\right)\le K e^{(2\ep-\gamma)(t-t_0)},\quad\text{for any }t_0\le t,
\end{equation}
which concludes the proof with $\overline \gamma=\gamma-2\ep$.
\end{proof}




It is now possible to prove that any uncoupled system $\dot x= f(t,x)$ satisfying~\ref{SL} admits a bounded local pullback and forward attracting trajectory, provided that the set, 
\begin{equation}
\{\beta_t(\cdot)\}_{t\in\R}=\{| f(t+\cdot,\mathbf{0})|^2\mid t\in\R\},
\end{equation}
is $L^1_{loc}$-bounded.

\begin{proposition}\label{prop:pullback_forward_uncoupled}
Let $f\in\LC$, satisfying~\ref{SL} and~\ref{SL2} Moreover, let $\alpha,\beta\in L^1_{loc}$ be the functions provided by {\rm Proposition~\ref{prop:dissipativity_condition}}. If $\{\beta_t(\cdot)\}_{t\in\R}$ is $L^1_{loc}$-bounded, 
then $\dot x =f(t,x)$ has a bounded local pullback attractor made of a single globally defined trajectory which is also forward attracting.
\end{proposition}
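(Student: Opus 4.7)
The plan is to combine the dissipativity condition from Proposition~\ref{prop:dissipativity_condition} with a variation-of-constants estimate analogous to~\eqref{eq:VOC_nonhomogeneous_integral_estimate} in the proof of Theorem~\ref{thm:sync_up_to_constant}, and then to use~\ref{SL}--\ref{SL2} to force the resulting ball of ultimate boundedness to collapse onto a single entire trajectory.

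First, given any solution $x(\cdot,t_0,\overline x)$ of $\dot x=f(t,x)$ with $\overline x\in\U_{t_0}$, Proposition~\ref{prop:dissipativity_condition} yields, for almost every $t\ge t_0$,
\begin{equation}
\frac{d}{dt}|x(t,t_0,\overline x)|^2\le \alpha(t)\,|x(t,t_0,\overline x)|^2+\beta(t).
\end{equation}
A scalar Gronwall-type argument (or the comparison theorem of Olech--Opial already invoked in the proof of Theorem~\ref{thm:sync_up_to_constant}) then gives
\begin{equation}
|x(t,t_0,\overline x)|^2\le\exp\!\Big(\!\int_{t_0}^t\!\alpha(s)\,ds\Big)|\overline x|^2+\int_{t_0}^t\exp\!\Big(\!\int_s^t\!\alpha(u)\,du\Big)\beta(s)\,ds.
\end{equation}
By~\eqref{eq:one_node_dissipativity_cond_2} the first term is bounded by $Ke^{-\overline\gamma(t-t_0)}|\overline x|^2$, while the $L^1_{loc}$-boundedness of $\{\beta_t\}_{t\in\R}$ allows the same chain of inequalities as in~\eqref{eq:VOC_nonhomogeneous_integral_estimate} to bound the convolution integral by $K\mu_\beta/(1-e^{-\overline\gamma})=:R^2$, where $\mu_\beta:=\sup_\tau\int_\tau^{\tau+1}\beta(s)\,ds<\infty$. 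Hence every trajectory starting in $\U_{t_0}$ is defined for all $t\ge t_0$ and its norm is ultimately bounded by $R$ uniformly in $t_0$ and $\overline x$.

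Next I would produce a globally defined bounded trajectory by a pullback construction. Fix $t\in\R$ and $\overline x\in\U$; for $t_0<t_1<t$, the function $w(s):=|x(s,t_0,\overline x)-x(s,t_1,x(t_1,t_0,\overline x))|^2$ only uses values in $\U$ (by forward invariance), so Proposition~\ref{prop:one-sided-dissipative-for-functions} and~\ref{SL2} yield
\begin{equation}
w(t)\le Ke^{-\gamma(t-t_1)}\,w(t_1).
\end{equation}
Since $w(t_1)$ is uniformly bounded by the estimate above, this shows $\{x(t,t_0,\overline x)\}_{t_0\to-\infty}$ is a Cauchy net in $\R^M$; denote its limit by $\sigma(t)$. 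Standard arguments (continuous dependence, uniform convergence on compact intervals) imply $\sigma$ is an entire bounded solution of $\dot x=f(t,x)$, and the same exponential estimate gives
\begin{equation}
|x(t,t_0,\overline x)-\sigma(t)|^2\le Ke^{-\gamma(t-t_0)}|\overline x-\sigma(t_0)|^2,
\end{equation}
so $\sigma$ pullback attracts every initial datum in $\U$ and is unique.

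The same inequality, now read with $t_0$ fixed and $t\to\infty$, shows that $\sigma$ is also forward attracting, which establishes that $\{(t,\sigma(t))\mid t\in\R\}$ is the bounded local pullback attractor and that it consists of a single globally defined trajectory. The one genuinely non-routine step is verifying that the pullback net is Cauchy rather than merely bounded: this is precisely where the \emph{global} dissipativity on $\U$ encoded in~\ref{SL} (and not merely the one-point dissipativity~\eqref{eq:dissipativity_condition}) is needed, and where the assumption that $\U$ is forward invariant must be used to keep the difference of the two solutions inside $\U$ so that Proposition~\ref{prop:one-sided-dissipative-for-functions} is applicable.
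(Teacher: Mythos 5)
Your proof is correct in substance and reaches the same conclusion, but it takes a partially different route from the paper. The first step---turning the dissipativity estimate of Proposition~\ref{prop:dissipativity_condition} into $|x(t,t_0,\overline x)|^2\le Ke^{-\overline\gamma(t-t_0)}|\overline x|^2+K\mu_\beta/(1-e^{-\overline\gamma})$ via the same chain of inequalities as in~\eqref{eq:VOC_nonhomogeneous_integral_estimate}---is exactly the paper's argument. Where you diverge is in how the attractor is obtained: the paper reads this estimate as the existence of a pullback absorbing ball and then invokes an abstract existence theorem for pullback attractors (\cite[Theorem 7.2]{kloeden2020introduction}), using the exponential contraction coming from~\ref{SL} and~\ref{SL2} (through Proposition~\ref{prop:one-sided-dissipative-for-functions}) only afterwards, to show that the fibers of the attractor are singletons and that the trajectory is forward attracting. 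You instead use that contraction to construct the entire bounded solution by hand, as the pullback limit of solutions started in the fibers $\U_{t_0}$ as $t_0\to-\infty$; this is more self-contained (no external attractor-existence theorem is needed) and the same estimate simultaneously yields uniqueness, pullback attraction and forward attraction. Both routes are legitimate; the paper's is shorter because the absorbing-ball-to-attractor step is outsourced.

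One slip should be fixed: as written, $w(s)=|x(s,t_0,\overline x)-x(s,t_1,x(t_1,t_0,\overline x))|^2$ vanishes identically for $s\ge t_1$, since by uniqueness and the cocycle property $x(s,t_1,x(t_1,t_0,\overline x))=x(s,t_0,\overline x)$. What you need to compare are two genuinely distinct solutions restarted at time $t_1$, e.g.~$x(\cdot,t_0,\overline x_{t_0})=x(\cdot,t_1,x(t_1,t_0,\overline x_{t_0}))$ and $x(\cdot,t_1,\overline x_{t_1})$, with $\overline x_{t_0}\in\U_{t_0}$ and $\overline x_{t_1}\in\U_{t_1}$ (note also that a fixed $\overline x$ need not belong to every fiber, so the initial datum should be allowed to depend on the starting time). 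Both points at time $t_1$ lie in $\U_{t_1}\subset B_r$ by forward invariance and uniform boundedness, so Proposition~\ref{prop:one-sided-dissipative-for-functions} together with~\ref{SL2} gives $|x(t,t_0,\overline x_{t_0})-x(t,t_1,\overline x_{t_1})|^2\le 4Kr^2e^{-\gamma(t-t_1)}$, which is the Cauchy estimate your argument requires; with this correction the remainder of your proof goes through.
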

\begin{proof}
Thanks to Proposition~\ref{prop:dissipativity_condition} we have that for any $t\in\R$, $s>0$ and $x_0\in\U_{t-s}$,
\begin{equation}
|x(t,t-s,x_0)|^2\le K e^{-\overline \gamma s}|x_0|^2+K\int_{t-s}^t\beta(u)e^{-\overline \gamma (t-u)}\, du.
\end{equation}
Reasoning as for~\eqref{eq:VOC_nonhomogeneous_integral_estimate}, and taking $x_0\in\U_{t-s}$, one obtains that 
\begin{equation}
|x(t,t-s,x_0)|^2\le K e^{-\overline \gamma s}r^2+\frac{K\mu }{1-e^{-\overline\gamma}},
\end{equation}
where $\mu:=\sup_{\tau\in\R}\int_\tau^{\tau+1}|\beta(s)|\, ds<\infty$, and $\U_s\subset B_r$ for some $r>0$ by assumption. Therefore, the ball of radius $\ep+K\mu/(1-e^{-\overline\gamma})$ for $\ep>0$ chosen as small as desired, pullback absorbs in finite time the fiber $\U_{t-s}$. This fact implies that there is a universe of attraction contained in $B_r$ that is pullback absorbed into the ball of radius $\ep+K\mu/(1-e^{-\overline\gamma})$. Therefore, a unique bounded  pullback attractor exists~\cite[Theorem 7.2]{kloeden2020introduction}.
On the other hand, Proposition~\ref{prop:one-sided-dissipative-for-functions} and the remark thereafter imply forward convergence of all the trajectories of $\dot x =f(t,x)$. Therefore, one immediately has that all the sections of the pullback attractor are singleton sets. This fact concludes the proof.
\end{proof}

If instead of~\ref{SL} and~\ref{SL2} the weaker conditions~\eqref{eq:dissipativity_condition} and~\eqref{eq:one_node_dissipativity_cond_2} are assumed, a weaker result of uniform ultimate boundedness of solution is obtained.

\begin{corollary}
Let $f\in\LC$, satisfying~\eqref{eq:dissipativity_condition} and~\eqref{eq:one_node_dissipativity_cond_2}. If $\{\beta_t(\cdot)\}_{t\in\R}$ is $L^1_{loc}$-bounded, 
then $\dot x =f(t,x)$ has a bounded local pullback attractor and the rest of trajectores starting in $\U$ are uniformly ultimately bounded.
\end{corollary}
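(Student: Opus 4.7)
The plan is to mimic the proof of Proposition~\ref{prop:pullback_forward_uncoupled} up to the construction of the pullback absorbing set, while dropping the final step where one-sided Lipschitz continuity was used to collapse the attractor fibers to single points. Since now only the asymptotic dissipativity inequality~\eqref{eq:dissipativity_condition} and the decay estimate~\eqref{eq:one_node_dissipativity_cond_2} are in force, we still have enough to control the growth of $|x(t)|^2$ along any solution but we no longer control differences $|x(t)-y(t)|$ between pairs of solutions.

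The first step is to derive, for any solution $x(t,t_0,x_0)$ with $x_0\in\U_{t_0}$, the scalar differential inequality
\begin{equation*}
\frac{d}{dt}|x(t,t_0,x_0)|^2=2\langle x(t,t_0,x_0),f(t,x(t,t_0,x_0))\rangle\le\alpha(t)|x(t,t_0,x_0)|^2+\beta(t),
\end{equation*}
valid almost everywhere as long as the trajectory remains in $\U_t$. Applying the Grönwall inequality and then~\eqref{eq:one_node_dissipativity_cond_2} yields
\begin{equation*}
|x(t,t_0,x_0)|^2\le K e^{-\overline\gamma(t-t_0)}|x_0|^2+K\int_{t_0}^t e^{-\overline\gamma(t-s)}\beta(s)\,ds.
\end{equation*}

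The second step is to bound the integral term uniformly in $t$. This is precisely the geometric-series argument used in~\eqref{eq:VOC_nonhomogeneous_integral_estimate}: splitting $[0,\infty)$ into unit intervals and using the $L^1_{loc}$-boundedness of $\{\beta_t\}_{t\in\R}$ yields
\begin{equation*}
K\int_{t_0}^t e^{-\overline\gamma(t-s)}\beta(s)\,ds\le\frac{K\mu}{1-e^{-\overline\gamma}},\qquad \mu:=\sup_{\tau\in\R}\int_\tau^{\tau+1}|\beta(s)|\,ds<\infty.
\end{equation*}
Setting $s=t-t_0$ and taking $\U_{t_0}\subset B_r$ (uniformly in $t_0$), this gives simultaneously: forward uniform ultimate boundedness, since for every $\varepsilon>0$ there is $T(\varepsilon)$ such that $|x(t,t_0,x_0)|^2<\varepsilon+K\mu/(1-e^{-\overline\gamma})$ whenever $t-t_0>T(\varepsilon)$; and pullback absorption, since fixing $t$ and letting $s\to\infty$ the exponential term $Ke^{-\overline\gamma s}r^2$ vanishes, so the closed ball of radius $\sqrt{\varepsilon+K\mu/(1-e^{-\overline\gamma})}$ pullback-absorbs every fiber $\U_{t-s}$ in finite time.

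The third step is to invoke the standard existence result for pullback attractors under the presence of a bounded pullback absorbing family, as in \cite[Theorem 7.2]{kloeden2020introduction}, to deduce the existence of a bounded local pullback attractor. No obstacle is expected: the argument is the same as in Proposition~\ref{prop:pullback_forward_uncoupled}, and the only thing we must \emph{not} claim is that the fibers are singletons, because without~\ref{SL} we have no control on $|x(t)-y(t)|$. The mildly delicate point is simply to verify that the universe of attraction (consisting of nonautonomous sets contained in the uniformly bounded set $\U$) is pullback absorbed, which follows immediately from the estimate above. The remaining trajectories starting in $\U$ are handled by the uniform ultimate boundedness already established.
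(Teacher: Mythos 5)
Your proposal is correct and follows essentially the same route as the paper: the same Grönwall-type estimate from \eqref{eq:dissipativity_condition} and \eqref{eq:one_node_dissipativity_cond_2}, the same geometric-series bound on the convolution integral as in \eqref{eq:VOC_nonhomogeneous_integral_estimate}, pullback absorption plus the cited existence theorem for the attractor, and the forward estimate for uniform ultimate boundedness, correctly omitting the singleton-fiber conclusion that required~\ref{SL}. No changes needed.
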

\begin{proof}
The existence of a unique bounded local pullback attractor is achieved following the same initial steps in the proof of Proposition~\ref{prop:pullback_forward_uncoupled}. Now consider $t>t_0$. Thanks to Proposition~\ref{prop:dissipativity_condition} and taking $x_0\in\U_{t_0}$, one obtains that 
\begin{equation}
|x(t,t_0,x_0)|^2\le K e^{-\overline \gamma (t-t_0)}|x_0|^2+K\int_{t_0}^t\beta(u)e^{-\overline \gamma (t-u)}\, du.
\end{equation}
Hence, reasoning as for~\eqref{eq:VOC_nonhomogeneous_integral_estimate}, and taking $x_0\in\U_{t_0}$, one obtains that 
\begin{equation}
|x(t,t_0,x_0)|^2\le K e^{-\overline \gamma (t-t_0)}r^2+\frac{K\mu }{1-e^{-\overline\gamma}},
\end{equation}
and for any $\ep>0$ there is a time $T(\ep)>0$ such that if $t-t_0>T(\ep)$ then 
\begin{equation}
|x(t,t_0,x_0)|^2\le \ep+\frac{K\mu }{1-e^{-\overline\gamma}},
\end{equation}
which completes the proof.
\end{proof}

\subsection{The coupled problem}
Now we turn our attention to the coupled problem~\eqref{eq1}. 
The natural question is if, despite the coupling, each node still has a pullback and forward attracting trajectory. An important role in our argument shall be played by the $L^1_{loc}$ linear operator defined for any $t\in\R$ by 
\begin{equation}
\R^M\ni u\mapsto(2A(t)-Id_{N\times N}L(t))u
\end{equation}
where $L(t)=(l^\rho_i(t))^\top_{i=1,\dots, N}$, and for each $i=1,\dots, N$, $l_i(\cdot)$ is the $L^1_{loc}$ function provided by~\ref{SL}. As for the uncoupled case, we firstly deal with the forward attractivity. 
\begin{lemma}\label{lem:linear_control_difference_trajectories_one_node}
    Consider $f_i\in\LC$ for $i=1,\dots, N$ satisfying~\ref{SL}, and the networked system~\eqref{eq1} where, for almost every $t\in\R$, $a_{ij}(t)\ge0$ for all $i,j=1,\dots,N$, with $i\neq j$ and $a_{ii}(t)=0$ for all $i=1,\dots, N$. Then, considered any pair of absolutely continuous functions $y(t)$ and $z(t)$ respectively solving~\eqref{eq1} with initial data $\overline y,\overline z\in\U^i_{t_0}$ for some $t_0\in\R$, it holds that 
    \begin{equation}
    \big(|y_i(t)-z_i(t)|^2\big)^\top_{i=1,\dots, N}\le U(t,t_0)(|\overline y_i-\overline z_i|^2)^\top_{i=1,\dots, N}
    \end{equation}
    where $U(t,t_0)$ is the principal matrix solution  of $\dot u=(2A(t)-Id_{N\times N}L(t))u$ at $t_0\in\R$.
\end{lemma}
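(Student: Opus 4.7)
The plan is to translate the componentwise differential inequality for $\xi_i(t):=|y_i(t)-z_i(t)|^2$ into a linear vector inequality dominated by a quasi-monotone (Metzler) matrix, and then conclude by the same kind of comparison argument used in the proof of Theorem~\ref{thm:sync_up_to_constant}. Concretely, I would first differentiate $\xi_i$ along the two solutions, obtaining
\begin{equation*}
\dot\xi_i = 2\langle y_i-z_i,\, f_i(t,y_i)-f_i(t,z_i)\rangle + 2\sum_{k=1}^N a_{ik}(t)\langle y_i-z_i,\,(y_k-z_k)-(y_i-z_i)\rangle.
\end{equation*}
For the internal-dynamics term I would use~\ref{SL} extended to pairs of continuous functions (Proposition~\ref{prop:one-sided-dissipative-for-functions}), which, since $y_i(t),z_i(t)\in\U^i_t$ by forward invariance of $\U$, gives $2\langle y_i-z_i, f_i(t,y_i)-f_i(t,z_i)\rangle\le l^\rho_i(t)\,\xi_i(t)$ for almost every $t\ge t_0$. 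For the coupling terms, after expanding, I would apply Young's inequality $2\langle u,v\rangle\le |u|^2+|v|^2$ to $\langle y_i-z_i,y_k-z_k\rangle$ together with the diffusive structure, so that the off-diagonal terms come out as nonnegative multiples of $\xi_k$ and the diagonal picks up the $-l^\rho_i(t)$ contribution claimed in the statement.

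The crucial structural remark is that, because $a_{ik}(t)\ge 0$ for all $i\ne k$ by assumption, the resulting comparison matrix $B(t)$ has nonnegative off-diagonal entries for almost every $t\in\R$, i.e.~it is a Metzler (quasi-monotone) matrix. This is precisely what is needed for the Carath\'eodory linear flow $\dot u=B(t)u$ to preserve the positive cone $\R^N_+$, and hence for its principal matrix solution $U(t,t_0)$ to be entry-wise nonnegative for $t\ge t_0$. Thus we have in hand the vector differential inequality $\dot\xi(t)\le B(t)\xi(t)$ with initial datum $\xi(t_0)=(|\overline y_i-\overline z_i|^2)^\top_{i=1,\dots,N}$, and a positivity-preserving linear flow generated by $B(t)$.

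To pass from this inequality to the asserted bound $\xi(t)\le U(t,t_0)\xi(t_0)$, I would replicate the $\varepsilon$-perturbation contradiction argument employed in the proof of Theorem~\ref{thm:sync_up_to_constant}. Namely, for $\varepsilon>0$ set $u_\varepsilon(t):=U(t,t_0)(\xi(t_0)+\varepsilon \mathbf{1})$, suppose there is a first time $t_1>t_0$ and an index $i$ with $\xi_i(t_1)=u_{\varepsilon,i}(t_1)$ while $\xi(t)\ll u_\varepsilon(t)$ on $[t_0,t_1)$, and use the Metzler structure (nonnegativity of the off-diagonal entries of $B$) together with the strict ordering of the other components to derive a scalar Carath\'eodory inequality $\dot\xi_i\le g_\xi(t,\xi_i)\le g_u(t,\xi_i)$ on $[t_0,t_1]$. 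An application of the Olech--Opial scalar comparison theorem~\cite{olech1960inegalite} then contradicts the existence of such a $t_1$, and sending $\varepsilon\to 0$ by continuous dependence on initial data yields $\xi(t)\le U(t,t_0)\xi(t_0)$ for all $t\ge t_0$.

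I expect the main obstacle to be a purely bookkeeping one: carefully matching the coefficients obtained from the Young/diffusive estimate with the announced matrix $2A(t)-\mathrm{diag}(L(t))$, and verifying that the Metzler property is preserved under the sign conventions used. Once that is settled, the positivity of $U(t,t_0)$ and the $\varepsilon$-perturbation/Olech--Opial machinery are direct adaptations of arguments already developed in the paper, so no fundamentally new ingredient is required.
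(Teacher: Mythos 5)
Your proposal follows essentially the same route as the paper: differentiate $\xi_i(t)=|y_i(t)-z_i(t)|^2$, control the internal term via \ref{SL} extended to continuous functions, bound the diffusive coupling so as to obtain a linear vector differential inequality with a quasi-monotone (Metzler) comparison matrix (possible precisely because $a_{ik}(t)\ge 0$), and conclude by the $\varepsilon$-perturbation/Olech--Opial comparison argument already used in Theorem~\ref{thm:sync_up_to_constant}. One bookkeeping caution: the one-sided Lipschitz estimate produces $+\,l^\rho_i(t)\,\xi_i$ on the diagonal, not $-\,l^\rho_i(t)\,\xi_i$ as you state; the intended comparison matrix has diagonal entries $l^\rho_i(t)$ (negative under the dissipativity assumptions) and off-diagonal entries $2a_{ik}(t)$, which the paper obtains with the crude bound $\langle u,v\rangle\le |u|^2+|v|^2$, whereas your sharp Young inequality gives the tighter matrix with diagonal $l^\rho_i(t)-\sum_k a_{ik}(t)$ and off-diagonal $a_{ik}(t)$, so to land exactly on the stated $U(t,t_0)$ you must either use the cruder bound or add the (trivial) remark that the tighter Metzler matrix is dominated entrywise by the stated one on the positive cone.
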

\begin{proof}
Note that for every $i=1,\dots, N$ and using the Cauchy-Schwarz inequality,
\begin{equation}
\begin{split}
   \frac{d}{dt} |y_i(t)&-z_i(t)|^2
   =2\big\langle y_i(t)-z_i(t),f_i\big(t,y_i(t)\big)-f_i\big(t,z_i(t)\big)\big\rangle+\\
   &\qquad\qquad \, +2\big\langle y_i(t)-z_i(t),\sum_{k=1}^Na_{ik}(t)\big(y_k(t)-y_i(t)-z_k(t)+z_i(t)\big)\big\rangle\\
   &\le l_i(t)|y_i(t)-z_i(t)|^2 -2\sum_{k=1}^Na_{ik}(t)\big\langle y_i(t)-z_i(t),y_i(t)-z_i(t)\big\rangle+\\
   &\qquad\qquad \,+2\sum_{k=1}^Na_{ik}(t)\big\langle y_i(t)-z_i(t),y_k(t)-z_k(t)\big\rangle\\
   &\le l_i(t)|y_i(t)-z_i(t)|^2-2\sum_{k=1}^Na_{ik}(t)|y_i(t)-z_i(t)|^2+\\
   &\qquad\qquad \,+2\sum_{k=1}^Na_{ik}(t)\big[|y_i(t)-z_i(t)|^2+|y_k(t)-z_k(t)|^2\big]\\
   &=l_i(t)|y_i(t)-z_i(t)|^2+2\sum_{k=1}^Na_{ik}(t)|y_k(t)-z_k(t)|^2.
\end{split}
\end{equation}
Therefore, we have that for $t_0\le t$ where it is well-defined, the vector $(|y_i(t)-z_i(t)|^2)^\top_{i=1,\dots, N}$ is an \emph{under-function} with respect to the initial value problem $u'=(2A(t)-Id_{N\times N}L(t))u$, $u(s)=|y(s)-z(s)|^2$, where $L(t)=(l^\rho_i(t))^\top_{i=1,\dots, N}$.  
 Therefore, we can reason as for the proof of Theorem~\ref{thm:sync_up_to_constant} to obtain that for all $t>t_0$,
\begin{equation}
\big(|y_i(t)-z_i(t)|^2\big)^\top_{i=1,\dots, N}\le U(t,t_0)(|y_i(t_0)-z_i(t_0)|^2)^\top_{i=1,\dots, N}
\end{equation}
where $U(t,s)$ is the principal matrix solution of $\dot u=(2A(t)-Id_{N\times N}L(t))u$ at $s\in\R$. Incidentally, this shows that $y(t)$ and $z(t)$ are defined for all $t\ge t_0$.
\end{proof}

Lemma~\ref{lem:linear_control_difference_trajectories_one_node} allows to immediately obtain a sufficient condition for forward convergence of the trajectories of each node. If the linear differential problem $\dot u=2(A(t)-Id_{N\times N}L(t))u$ has dichotomy spectrum contained in $(-\infty,0)$, then each node has a bounded forward attracting trajectory.
\begin{theorem}\label{thm:existence_attractors_coupled}
Consider $f_i\in\LC$ for $i=1,\dots, N$ and assume that they all satisfy~\ref{SL}, each within a forward invariant uniformly bounded nonautonomous set $\U^i\subset\R\times\R^M$. Moreover, assume that $\dot u=2(A(t)-Id_{N\times N}L(t))u$ has dichotomy spectrum contained in $(-\infty,0)$.
Then, there is an absolutely continuous function $\sigma:\R\to\R^{M\times N}$, $t\mapsto \sigma(t)=\big(\sigma_i(t)\big)_{i=1,\dots, N}$ that solves~\eqref{eq1} and such that, for every $i=1,\dots, N$, $\sigma_i(\cdot)$ is pullback attracting for 
\begin{equation}
\dot x_i = f_i(t,x_i) + \sum_{j=1}^N a_{ij}(t)(\sigma_j(t)-x_i),
\end{equation} and if $y(t)=\big(y_i(t)\big)_{i=1,\dots, N}$ solves~\eqref{eq1} with $y_i(t_0)\in\U^i_{t_0}$ for some $t_0\in\R$, then for every $i=1,\dots, N$,
\begin{equation}
\lim_{t\to\infty}|y_i(t)-\sigma_i(t)|=0.
\end{equation}
\end{theorem}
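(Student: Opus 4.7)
The plan is to build $\sigma$ as a pullback limit of solutions of~\eqref{eq1} initiated at a fixed point in the forward-invariant set $\U:=\prod_{i=1}^N\U^i$ at progressively earlier times, and then derive both attraction statements from the forward pairwise contractivity provided by Lemma~\ref{lem:linear_control_difference_trajectories_one_node}. The spectral assumption together with Remark~\ref{rmk:ED} implies that the linear problem appearing in that lemma admits an exponential dichotomy on $\R$ with projector the identity, so its principal matrix solution $U(t,s)$ satisfies $\|U(t,s)\|\le K\,e^{-\gamma(t-s)}$ for suitable constants $K\ge 1$, $\gamma>0$ and every $t\ge s$.

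Combining this bound with Lemma~\ref{lem:linear_control_difference_trajectories_one_node} and the uniform boundedness of $\U$, any two solutions $y,z$ of~\eqref{eq1} with initial data in $\U_{t_0}$ satisfy $\sum_i|y_i(t)-z_i(t)|^2 \le C\,e^{-\gamma(t-t_0)}$ for all $t\ge t_0$, with $C>0$ independent of the chosen initial data. This is the pairwise forward contraction that will drive everything else.

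Next, I construct $\sigma$ by pullback. Pick any $x_0\in\U_0$ (nonempty since $B_r\subset\U_t$ for a.e.\ $t$ under~\ref{SL}) and, for every $s>0$, let $\sigma^{(s)}\colon[-s,\infty)\to(\R^M)^N$ denote the solution of~\eqref{eq1} with $\sigma^{(s)}(-s)=x_0$. Forward invariance of $\U$ keeps $\sigma^{(s)}$ inside $\U$, and applying the previous estimate to $\sigma^{(s_1)}$ and $\sigma^{(s_2)}$ on the common interval $[-s_1,\infty)$ (with $s_2>s_1$) yields that $\{\sigma^{(s)}(t)\}_s$ is Cauchy as $s\to\infty$ uniformly on each compact $t$-interval; set $\sigma(t):=\lim_{s\to\infty}\sigma^{(s)}(t)$. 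Passing to the limit in the Carath\'eodory integral equation satisfied by $\sigma^{(s)}$ on any compact window $[t_0,t]$---using uniform-on-compacta convergence, \ref{L} for pointwise convergence of the integrand, and the $m$-bounds from~\ref{C2} together with the local integrability of the entries of $A(\cdot)$ as an $L^1_{loc}$ majorant---one checks that $\sigma$ is locally absolutely continuous and solves~\eqref{eq1} on all of $\R$, with graph contained in $\U$.

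Forward attraction of $\sigma$ for~\eqref{eq1} is then immediate from the pairwise contraction estimate applied to $y$ and $\sigma$, while pullback attraction of each $\sigma_i$ in the decoupled scalar equation of the statement follows from the analogous one-sided Lipschitz computation on $y_i-\sigma_i$, where the $-\sum_j a_{ij}(t)(y_i-\sigma_i)$ term contributes additional (nonnegative) damping coming from the standing assumption $a_{ij}\ge 0$ in Lemma~\ref{lem:linear_control_difference_trajectories_one_node}. The main technical obstacle I expect is exactly the passage to the limit inside the Carath\'eodory integral: since $A(\cdot)$ and the $\LC$ $m$- and $l$-bounds are only $L^1_{loc}$ in $t$, the dominated-convergence step has to be performed on each compact time window rather than uniformly over $\R$, and one must be careful to preserve absolute continuity (as opposed to mere continuity) of the pullback limit.
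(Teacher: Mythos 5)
Your proposal is correct and takes essentially the same route as the paper: the dichotomy-spectrum assumption (via Remark~\ref{rmk:ED}) combined with Lemma~\ref{lem:linear_control_difference_trajectories_one_node} yields exponential contraction of pairwise differences of coupled solutions with data in $\U$, from which $\sigma$ is obtained as a pullback limit---a step the paper's proof leaves implicit and you spell out via the Cauchy-on-compacta and integral-equation argument---and forward attraction of $\sigma$ is then immediate. The only point to tighten is the pullback attractivity of $\sigma_i$ for the frozen node equation: the computation gives $\frac{d}{dt}|y_i-\sigma_i|^2\le \bigl(l_i(t)-2\sum_{k}a_{ik}(t)\bigr)|y_i-\sigma_i|^2$, and since \ref{SL} alone does not make $l_i$ dissipative you should note that the dichotomy hypothesis supplies the missing decay, because the comparison system has nonnegative off-diagonal entries, so its solution through the $i$-th coordinate vector dominates $\exp\bigl(\int_{t_0}^t l_i(s)\,ds\bigr)$ and forces $\exp\bigl(\int_{t_0}^t l_i(s)\,ds\bigr)\le Ke^{-\gamma(t-t_0)}$, after which your extra-damping remark closes the argument (the paper itself does not detail this step either).
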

\begin{proof}
Since $\dot u=2(A(t)-Id_{N\times N}L(t))u$ has dichotomy spectrum contained in $(-\infty,0)$, we have that there are $K\ge 1$ and $\gamma>0$, such that 
\begin{equation}
|U(t,t_0)|\le Ke^{-\gamma(t-t_0)}\quad \text{for } t_0\le t,
\end{equation}
where $U(t,t_0)$ is the principal matrix solution  of $\dot u=(2A(t)-Id_{N\times N}L(t))u$ at $t_0\in\R$. 
Therefore, thanks to Lemma~\ref{lem:linear_control_difference_trajectories_one_node}, we have that for any $t_0\in\R$ and  $\overline y=(\overline y_i)_{i=1,\dots, N},\overline z=(\overline z_i)_{i=1,\dots, N}\in\R^M$ with $\overline y_i,\overline z_i\in\U^i_{t_0}$ for all $i=1,\dots,N$, the solutions $y(t,t_0,\overline y)=\big(y_i(t,t_0,\overline y_i)\big)_{i=1,\dots, N}$ and $z(t,t_0,\overline z)=\big(z_i(t,t_0,\overline z_i)\big)_{i=1,\dots, N}$ of~\eqref{eq1} with initial data $y(t_0)=\overline y$ and $z(t_0)=\overline z$, respectively, are defined for all $t\ge t_0$ and in particular,
\begin{equation}
\big(|y_i(t,t_0,\overline y_i)-z_i(t,t_0,\overline z_i)|^2\big)^\top_{i=1,\dots, N}\le Ke^{-\gamma (t-t_0)}(|\overline y_i-\overline z_i|^2)^\top_{i=1,\dots, N},
\end{equation}
for any $t>t_0$.
Particularly, since each nonautonomous set $\U^i$, $i=1,\dots,N$, is  uniformly bounded, then there is $r>0$ such that $\U^i_t\subset B_{\sqrt{r}/2}$ for all $t\in\R$ and $i=1,\dots,N$. Then, fixed $\ep>0$, and considered $s\ge \log(rK/\ep)/\gamma=:T(r,\ep)$, one has that for any $t\in\R$,
\begin{equation}
|y_i(t,t-s,\overline y_i)-z_i(t,t-s,\overline z_i)|^2\le \ep\quad \text{for $ s\ge T(r,\ep)$}.
\end{equation}
Consequently, the function $\zeta(t)=(0,\dots,0)^\top\in\R^M$ pullback and forward attracts all the trajectories $\big(|y_i(t,t-s,\overline y_i)-z_i(t,t-s,\overline z_i)|^2\big)^\top_{i=1,\dots, N}$ of the dynamical system induced by Proposition~\ref{prop:dyn_sys_from_difference}. In particular, this implies that there is a globally defined and absolutely continuous function $\sigma:\R\to\R^{M\times N}$ that solves~\eqref{eq1} and satisfies the thesis.
\end{proof}

As we have recalled in Remark~\ref{rmk:ED}, row-dominance is a sufficient condition for the existence of an exponential dichotomy. Therefore, the stronger dissipativity condition than~\ref{SL} guarantees the persistence of an attractor also for the coupled network as we show in the next corollary. Particularly, we shall henceforth assume that 
\begin{enumerate}[label=\upshape(\textbf{SL1*}),leftmargin=37pt,itemsep=2pt]
\item\label{SL1*}
for every $i=1,\dots,N$ there exists a function $l_i\in L^1_{loc}$ such that,~\eqref{eq:SL} is verified and
\begin{equation*}\label{eq:SL1}
\sup_{\substack{t\in\R,\\ i=1,\dots,N}}l_i(t)<0\quad\text{and}\quad \gamma=\sup_{\substack{t\in\R,\\ i=1,\dots,N}}\bigg\{|l_i(t)|-2\sum_{\substack{k=1\\k\neq i}}^Na_{ik}(t)\bigg\}>0.
\end{equation*}
\end{enumerate}

\begin{corollary}\label{cor:node_i_dynamics_controlled_by_linear_system}
Consider the networked system~\eqref{eq1} where, for almost every $t\in\R$, $a_{ij}(t)\ge0$ for all $i,j=1,\dots,N$, with $i\neq j$ and $a_{ii}(t)=0$ for all $i=1,\dots, N$, and assume further that $f_i\in\LC$  satisfies~\ref{SL1*} for all $i=1,\dots, N$.
Then, the thesis of Theorem~\ref{thm:existence_attractors_coupled} holds true.
\end{corollary}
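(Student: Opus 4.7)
The task reduces to verifying the single nontrivial hypothesis of Theorem~\ref{thm:existence_attractors_coupled}, namely that the linear nonautonomous system $\dot u=(2A(t)-Id_{N\times N}L(t))u$ has dichotomy spectrum contained in $(-\infty,0)$. My plan is to show that~\ref{SL1*} is tailored precisely so that this matrix is uniformly row-dominant in the sense of Fink, whence Fink's criterion~\cite[Theorem 7.16]{fink1974almost} produces the desired exponential dichotomy on $\R$ with projector the identity, as already invoked in the proofs of Theorems~\ref{thm:sync_up_to_constant} and~\ref{thm:sync_cluster}.

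First, I would unpack the coefficient matrix entrywise, reading off the ODE obtained in Lemma~\ref{lem:linear_control_difference_trajectories_one_node}: the $i$-th row has $l_i(t)$ on the diagonal (using $a_{ii}(t)=0$) and $2a_{ik}(t)$ off-diagonal for $k\ne i$. Since $a_{ik}(t)\ge 0$ by assumption, the sum of absolute values of off-diagonal entries in row $i$ equals $2\sum_{k\ne i}a_{ik}(t)$, so the row-dominance defect in row $i$ is exactly
\begin{equation}
|l_i(t)|-2\sum_{k\ne i}a_{ik}(t),
\end{equation}
which by~\ref{SL1*} is bounded below by $\gamma>0$ uniformly in $t$ and $i$. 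Together with $\sup_{t,i}l_i(t)<0$, this gives uniform strictly negative diagonal and uniform row-dominance, so Fink's theorem applies and delivers constants $K\ge 1$ and $\gamma'>0$ (one can in fact take $\gamma'=\gamma$) such that the principal matrix solution satisfies $\|U(t,t_0)\|\le K e^{-\gamma'(t-t_0)}$ for all $t\ge t_0$; equivalently, $\Sigma\subset(-\infty,0)$.

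At that point Theorem~\ref{thm:existence_attractors_coupled} applies verbatim, since its only hypothesis beyond~\ref{SL} and the standing sign conditions on $A$ is precisely the location of the dichotomy spectrum just established. The forward-invariant uniformly bounded nonautonomous sets $\U^i$ are inherited from~\ref{SL1*}, and so the theorem produces the globally defined solution $\sigma(\cdot)$ whose components pullback- and forward-attract all trajectories of~\eqref{eq1} starting in $\prod_i \U^i_{t_0}$. There is no real obstacle here: the content of the corollary is the reduction that~\ref{SL1*} is a checkable, coefficient-level sufficient condition for the abstract dichotomy hypothesis of Theorem~\ref{thm:existence_attractors_coupled}, and the only care required is matching the sign/notation of $Id_{N\times N}L(t)$ (a diagonal matrix carrying the entries $l_i(t)$) with the ODE derived inside the proof of Lemma~\ref{lem:linear_control_difference_trajectories_one_node}.
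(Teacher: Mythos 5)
Your proposal is correct and follows essentially the same route as the paper: its proof likewise observes that~\ref{SL1*} makes the comparison system $\dot u=(2A(t)-Id_{N\times N}L(t))u$ row-dominant in Fink's sense, hence with dichotomy spectrum contained in $(-\infty,0)$, and then concludes directly from Theorem~\ref{thm:existence_attractors_coupled}. Your explicit entrywise row-dominance check and the remark on matching the sign/notation of $Id_{N\times N}L(t)$ with the inequality from Lemma~\ref{lem:linear_control_difference_trajectories_one_node} simply spell out what the paper leaves implicit.
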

\begin{proof}

The assumption~\ref{SL1*} implies that the linear problem $\dot u=(2A(t)-Id_{N\times N}L(t))u$ is row dominant and that its dichotomy spectrum is contained in $(-\infty,0)$. Therefore the thesis is a direct consequence of Theorem~\ref{thm:existence_attractors_coupled}.
\end{proof}

\section{Conclusion and Discussion}

In this paper we have considered general linearly and diffusely coupled networks. First, the nodes themselves, with internal dynamics $f_i(t,x_i)$ are of the Lipschitz Carath\'eodoy class, meaning that, in particular, continuity in time is not even required. 
Second, the interaction between nodes is also quite general. In essence we consider temporal interconnections, described by $a_{ij}(t)\in\R$, that are locally integrable. This includes (but it is not restricted to), for example, piece-wise continuous changes in the topology of the network. Our main goal has been to provide quantitative conditions under which synchronization of the nodes is achieved. One of the main highlights of the results we provide is that they mainly depend on the network structure, i.e., on the $a_{ij}$'s. This offers important advantages compared to some other criteria that can, for example, depend on spectral properties, and at the same time allows for a control approach to synchronization. Among the results we have presented we emphasize the synchronization (up to a constant) and the synchronization of clusters, of temporal networks. We have presented some further results, for example for networks with global couplings, where the synchronization error can be made arbitrarily small. Another striking feature of the developed sufficient results of synchronization is their robustness against perturbation due to the roughness of the exponential dichotomy on which they are based. 

A limitation of the presented theory is that we require that all the components of (higher dimensional) nodes are connected in a uniform way---no inner-coupling matrix is considered; although it might be possible to also include this case by using weighted inner products for our proofs in analogy with the idea of the Mahalanobis distance, which is often employed in statistics in such an inner coupling between variables. In summary, as we exemplified through the paper, several quite general and practically relevant situations for synchronization can be covered by our theory. As future work, one could attempt to improve the presented results by allowing that only some of the components of the nodes are to be connected. This is reminiscent of under-actuated control laws. Another interesting extension would include nonlinear interaction functions, as well as fully adaptive networks where the network topology depends on the node dynamics and vice versa node dynamics depends on the topology~\cite{Berneretal}. 

Finally, we notice that since a topological theory for the construction of continuous flows for Carathéodory differential equations exists for ordinary ~\cite{artstein1977topological,longo2018topologies,longo2017topologies, longo2019weak}, delay~\cite{longo2019topologies,longo2021monotone} and parabolic~\cite{longo2022topologies} differential problems, not only the ideas here exposed could be extended to these contexts but also additional results of propagation of synchronization in the hull could be explored.

\section*{Acknowledgements} 
CK was partly supported by a Lichtenberg Professorship of the VolkswagenStiftung. CK was also supported by a DFG Sachbeihilfe grant 444753754. IPL was partly supported  by UKRI under the grant agreement EP/X027651/1, by the European Union’s Horizon 2020 research and innovation programme under the Marie Skłodowska-Curie grant agreement No 754462, by MICIIN/FEDER project PID2021-125446NB-I00, by TUM International Graduate School of Science and Engineering (IGSSE) and by the University of Valladolid under project PIP-TCESC-2020. CK and IPL also acknowledge partial support of the EU within the TiPES project funded by the European Unions Horizon 2020 research and innovation programme under grant agreement No. 820970.\par\smallskip

Furthermore, we thank two anonymous reviewers whose suggestions have been implemented in the current version of the paper.

\bibliographystyle{siam}
\bibliography{references}

\end{document}